\newcommand{é}{\'e}
\newcommand{è}{\`e}
\newcommand{à}{\`a}
\newcommand{ù}{\`u}
\newcommand{ê}{\^e}
\newcommand{}{\'e}
\newcommand{}{\`e}
\newcommand{}{\`a}
\newcommand{}{\`u}
\newcommand{}{\^e}
\newcommand{\dis}{\displaystyle}
\renewcommand{\{}{\left\lbrace}
\renewcommand{\}}{\right\rbrace}
\renewcommand{\(}{\left(}
\renewcommand{\)}{\right)}
\renewcommand{\[}{\left[}
\renewcommand{\]}{\right]}
\newcommand{\Rin}{\mbox{ \rm in } \,}
\newcommand{\Ron}{\mbox{ \rm on } \,}
\renewcommand{\tilde}{\widetilde}
\newcommand{\ov}{\overset{ \mbox{\tiny def}}{=}}
\newcommand{\Rr}{\mathbb{R}}
\newcommand{\mydiv}{\mathrm{div}\,}
\newcommand{\curl}{\mathrm{curl}\,}
\newcommand{\Gu}{\boldsymbol{u}}
\newcommand{\Gv}{\boldsymbol{v}}
\newcommand{\Gw}{\boldsymbol{w}}
\newcommand{\Gz}{\boldsymbol{z}}
\newcommand{\Gg}{\boldsymbol{g}}
\newcommand{\Gh}{\boldsymbol{h}}
\newcommand{\Gc}{\boldsymbol{c}}
\newcommand{\Gn}{\boldsymbol{\mathrm{n}}}
\newcommand{\Gf}{\boldsymbol{f}}
\newcommand{\GW}{\boldsymbol{W}}
\newcommand{\GG}{\boldsymbol{G}}
\newcommand{\Gpsi}{\boldsymbol{\psi}}
\newcommand{\Gvarphi}{\boldsymbol{\varphi}}
\newcommand{\GV}{\boldsymbol{V}}
\newcommand{\Gy}{\boldsymbol{y}}
\newcommand{\HH}{\mathrm{H}}
\newcommand{\GHH}{\boldsymbol{\mathrm{H}}}
\newcommand{\GWW}{\boldsymbol{\mathrm{W}}}
\newcommand{\LL}{\mathrm{L}}
\newcommand{\GLL}{\boldsymbol{\mathrm{L}}}
\newcommand{\II}{\mathrm{I}}
\newcommand{\dd}{\mathrm{d}}
\newcommand{\nn}{\mathrm{n}}
\newcommand{\0}{\boldsymbol{0}}
\newcommand{\MD}{\mathcal{D}}
\newcommand{\MF}{\mathcal{F}}
\newcommand{\MK}{\mathcal{K}}
\newcommand{\MO}{\mathcal{O}}
\newcommand{\BE}{\begin{equation}}
\newcommand{\EE}{\end{equation}}
\newcommand{\BEn}{\begin{equation*}}
\newcommand{\EEn}{\end{equation*}}
\newcommand{\BEN}{\begin{equation*}}
\newcommand{\EEN}{\end{equation*}}
\newcommand{\BA}{\begin{array}}
\newcommand{\EA}{\end{array}}
\newcommand{\BEQNA}{\begin{eqnarray}}
\newcommand{\EEQNA}{\end{eqnarray}}
\newcommand{\BEQNAn}{\begin{eqnarray*}}
\newcommand{\EEQNAn}{\end{eqnarray*}}
\newcommand{\BEA}{\begin{eqnarray}}
\newcommand{\EEA}{\end{eqnarray}}
\newcommand{\BEAn}{\begin{eqnarray*}}
\newcommand{\EEAn}{\end{eqnarray*}}
\newcommand{\BEAN}{\begin{eqnarray*}}
\newcommand{\EEAN}{\end{eqnarray*}}
\newcommand{\BM}{\begin{multline}}
\newcommand{\EM}{\end{multline}}
\newcommand{\BMn}{\begin{multline*}}
\newcommand{\EMn}{\end{multline*}}
\newcommand{\BMN}{\begin{multline*}}
\newcommand{\EMN}{\end{multline*}}
\newcommand{\vct}[2]{{\left (\begin{array}{c}\displaystyle #1\\\displaystyle #2\end{array}\right )}}
\newcommand{\vctt}[3]{{\left (\begin{array}{c}\displaystyle #1\\\displaystyle #2\\\displaystyle #3\end{array}\right )}}
\newcommand{\fonction}[5]{\begin{array}[t]{lrcl}
#1 :&#2 &\longrightarrow &#3\\
&#4& \longmapsto &#5
\end{array}}
\def \restriction#1#2{\mathchoice
              {\setbox1\hbox{${\displaystyle #1}_{\scriptstyle #2}$}
              \restrictionaux{#1}{#2}}
              {\setbox1\hbox{${\textstyle #1}_{\scriptstyle #2}$}
              \restrictionaux{#1}{#2}}
              {\setbox1\hbox{${\scriptstyle #1}_{\scriptscriptstyle #2}$}
              \restrictionaux{#1}{#2}}
              {\setbox1\hbox{${\scriptscriptstyle #1}_{\scriptscriptstyle #2}$}
              \restrictionaux{#1}{#2}}}
\def\restrictionaux#1#2{{#1\,\smash{\vrule height .8\ht1 depth .85\dp1}}_{\,#2}}
\newcommand{\norm}[1]{\left\Vert #1 \right\Vert}
\newcommand{\abs}[1]{\left\vert #1 \right\vert}
\newcommand{\priv}[2]{#1  \backslash  \overline{#2}}
\newtheorem{theor}{Theorem}[section]
\newtheorem{theorem}[theor]{Theorem}
\newtheorem{prop}[theor]{Proposition}
\newtheorem{lemma}[theor]{Lemma}
\newtheorem{remark}[theor]{Remark}
\title{Stability estimates for Navier-Stokes equations and application to inverse problems}
\author{Mehdi Badra\footnote{Laboratoire LMAP, UMR CNRS 5142, Universit\'e de Pau et des Pays de l'Adour, F-64013 Pau Cedex, France. E-mail: {\tt mehdi.badra@univ-pau.fr}}
\and Fabien Caubet\footnote{Institut de Math\'ematiques de Toulouse ; UMR5219 ; Universit\'e de Toulouse ; CNRS ; UPS IMT, F-31062 Toulouse Cedex 9, France.  E-mail: {\tt fabien.caubet@math.univ-toulouse.fr}
}
\and J\'er\'emi Dard\'e\footnote{Institut de Math\'ematiques de Toulouse ; UMR5219 ; Universit\'e de Toulouse ; CNRS ; UPS IMT, F-31062 Toulouse Cedex 9, France.  E-mail: {\tt jdarde@math.univ-toulouse.fr}}
}
\begin{document}

\maketitle
\cfoot{}
\rhead{\small \textit \leftmark}
\lhead{}
\cfoot{\thepage}
\numberwithin{equation}{section}

\begin{abstract}
 In this work, we  present some new Carleman inequalities for Stokes and Oseen equations with non-homogeneous boundary conditions.  These estimates lead to log type stability inequalities for the problem of recovering the solution of the Stokes and Navier-Stokes equations from both boundary and distributed observations. These inequalities  fit the well-known unique continuation result of Fabre and Lebeau~\cite{FabLeb96}: the distributed observation only depends on interior measurement of the velocity, and the boundary observation only depends on the trace of the velocity and of the Cauchy stress tensor measurements. Finally, we present two applications for such inequalities. First, we apply these estimates to obtain stability inequalities for the inverse problem of recovering Navier or Robin boundary coefficients from boundary measurements. Next, we use these estimates to deduce the rate of convergence of two reconstruction methods of the Stokes solution from the measurement of Cauchy data: a quasi-reversibility method and a penalized Kohn-Vogelius method.
\end{abstract}

\textrm{\bf Keywords:}
{Stability estimate, Navier-Stokes equations, Carleman inequality, Inverse problems}

\textrm{\bf AMS Classification:} 35R30, 35Q30, 76D07, 76D05


\section{Introduction and main results}

For a nonempty bounded open subset $\Omega$ of $\mathbb{R}^N$ ($N=2$ or $N=3$), we 
 consider a pair velocity-pressure $(\Gv,p)\in {\bf H}^2(\Omega) \times \HH^1(\Omega)$ solution of the following linearized Navier-Stokes equations
 (also called Oseen equations):
\begin{equation}  \label{EqVitesseFluideOseen}
\left\lbrace 
\begin{array}{rcll}
- \nu \Delta \Gv + \( \Gz_{1} \cdot \nabla \) \Gv + \( \Gv \cdot \nabla \) \Gz_{2}  + \nabla p    & = & \Gf   & \mbox{in }  \Omega   , \\
 {\rm div}\,  \Gv & = & d   & \mbox{in } \Omega .
\end{array}
\right.
\end{equation}
 Above and in the following, $\nu >0$ is a constant which represents the kinematic viscosity of the fluid, $\Gf \in {\bf L}^2(\Omega)$, $d\in \HH^1(\Omega)$ and 
\begin{equation}\label{Hypzi}
\Gz_{1}\in \GLL^{\infty}(\Omega)\quad \mbox{ and }\quad \Gz_{2}\in \GWW^{1,r} (\Omega)\; \mbox{ with }
\left\lbrace
\begin{array}{rl}
r>2 &\mbox{ if }N=2,\\
r=3 &\mbox{ if } N=3.
\end{array}
\right.
\end{equation}
In the following, $\Gz_{1}$ and $\Gz_{2}$ will be two solutions of the Navier-Stokes equations in $\Omega$. More precisely, if $\Gz_{1}$ and $\Gz_2$ are two solutions of the Navier-Stokes equations, then their difference $\Gv = \Gz_1 - \Gz_2$ verifies \eqref{EqVitesseFluideOseen}.

The pair $(\Gv,p)$ is not completely determined by System~\eqref{EqVitesseFluideOseen}. However, if we have some additional \textit{observation}, such as the value of the velocity $\Gv$ in a nonempty (and arbitrary small) open subset $\omega\subset \Omega$, namely
\BE \label{CondFL01}
\Gv = \Gv_{\rm obs} \quad \mbox{ in } \, \omega ,
\EE
or the value of the Cauchy data ($\Gv$,  $\sigma (\Gv,p)\Gn$) on a  nonempty open subset $\Gamma_{\rm obs}$ of $\partial\Omega$, namely
\begin{equation} \label{CondFL02}
\left\lbrace 
\begin{array}{rcll}
 \Gv   & = & \Gg_{D}   & \mbox{on }  \Gamma_{\rm obs}   , \\
\sigma(\Gv,p) \Gn & = & \Gg_{N}   & \mbox{on } \Gamma_{\rm obs} ,
\end{array}
\right.
\end{equation}
then Fabre and Lebeau's Theorem guarantees the uniqueness of the corresponding pair $(\Gv, p)$ (see \cite{FabLeb96}). However, the related stability inequality expressing the (conditional) continuous dependence of $(\Gv,p)$ with respect to $\|\Gf\|_{{\bf L}^2(\Omega)}$, $\|d\|_{\HH^1(\Omega)}$ and to some norm $\|(\Gv,p)\|_{\rm Obs}$ (corresponding to one of the above mentioned observation) are not yet proved for system \eqref{EqVitesseFluideOseen}.
Indeed, up to our knowledge, the most recent result quantifying the Fabre and Lebeau's unique continuation theorem in the Stokes case is the following one given in~\cite[Theorem~1.4]{BouEgl13-2} by Boulakia~\textit{et al.}:
\begin{theor}[Boulakia~\textit{et al.} in~\cite{BouEgl13-2}]
Assume that $\Omega$ is of class $C^{\infty}$. There exists $d_{0}>0$ such that for all $d>d_{0}$ there exists $C>0$ such that, for all solution $(\Gv,p)\in {\bf H}^2(\Omega)\times \HH^2(\Omega)$ of the Stokes equations
\begin{equation*}
\left\lbrace 
\begin{array}{rcll}
- \nu \Delta \Gv  + \nabla p    & = & \0   & \mbox{\rm in }  \Omega   , \\
 {\rm div}\,  \Gv & = & 0   & \mbox{\rm in } \Omega ,
\end{array}
\right.
\end{equation*}
we have
\begin{equation*}
\norm{\Gv}_{\GHH^1(\Omega)} + \norm{p}_{\HH^1(\Omega)}  \leq   
C \frac{ \|\Gv\|_{{\bf H}^2(\Omega)}+\|p\|_{\HH^2(\Omega)}}{ \left( \ln\left(\displaystyle d \, \frac{\|\Gv\|_{{\bf H}^2(\Omega)}+\|p\|_{\HH^2(\Omega)}}{\|\Gv\|_{\GHH^1(\omega)} + \|p\|_{\HH^1(\omega)} }\right) \right)^{1/2}}
\end{equation*}
and
\begin{equation*}
\norm{\Gv}_{\GHH^1(\Omega)} + \norm{p}_{\HH^1(\Omega)}   \leq  
C  \frac{\|\Gv\|_{{\bf H}^2(\Omega)}+\|p\|_{\HH^2(\Omega)}}{ \left( \ln\left(\displaystyle d \, \frac{\|\Gv\|_{{\bf H}^2(\Omega)}+\|p\|_{\HH^2(\Omega)}}{\|\Gv\|_{{\bf L}^{2}(\Gamma_{\rm obs})} + \|\frac{\partial\Gv}{\partial\Gn}\|_{{\bf L}^{2}(\Gamma_{\rm obs})} + \left\|p\right\|_{\LL^{2}(\Gamma_{\rm obs})}  + \|\frac{\partial p}{\partial\Gn}\|_{{\LL^{2}(\Gamma_{\rm obs})}} }\right) \right)^{1/2}} .
\end{equation*}
\end{theor}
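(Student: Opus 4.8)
The plan is to derive the estimate from a global Carleman inequality for the Stokes operator and then to optimize the modulus so as to turn an exponentially weighted energy estimate into the stated logarithmic dependence. First I would remove the pressure coupling at the level of the differential equations: taking the divergence of the momentum equation and using $\dive\Gv = 0$ gives $\Delta p = 0$, so the pressure is harmonic; applying the Laplacian to the momentum equation then gives $-\nu\,\Delta^2\Gv + \nabla(\Delta p) = \0$, hence $\Delta^2\Gv = \0$, so each component of the velocity is biharmonic. This reduces matters to the scalar operators $\Delta$ and $\Delta^2$, for which Carleman estimates are classical; the Stokes relation $-\nu\Delta\Gv + \nabla p = \0$ is then reintroduced when one needs to transfer information between the pressure and the velocity, and, crucially, in the boundary case it converts the Cauchy data of $p$ into the higher-order normal traces of $\Gv$ that a fourth-order Carleman estimate requires.

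Second, I would fix a weight $\varphi = e^{\lambda\psi}$ with $\psi$ having non-vanishing gradient and satisfying H\"ormander's sub-ellipticity condition for both $\Delta$ and $\Delta^2$, and establish a weighted estimate of the schematic form
\begin{equation*}
\tau \int_{\Omega} e^{2\tau\varphi}\left( \abs{\nabla\Gv}^2 + \tau^2\abs{\Gv}^2 \right)\dd x + (\text{analogous terms for } p) \leq C \int_{\Omega} e^{2\tau\varphi}\left( \abs{\Delta^2\Gv}^2 + \abs{\Delta p}^2 \right)\dd x + (\text{observation terms}),
\end{equation*}
valid for all $\tau \geq \tau_0$. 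Because $\Gv$ is biharmonic and $p$ harmonic, the first integral on the right vanishes identically, so only the weighted observation terms survive. In the interior case these are supported in $\omega$ and dominated by $e^{C\tau}\left(\norm{\Gv}_{\GHH^1(\omega)} + \norm{p}_{\HH^1(\omega)}\right)^2$; in the boundary case one works with a Carleman estimate up to the observed part of $\partial\Omega$, whose boundary terms are controlled by the full Cauchy data $\norm{\Gv}_{\GLL^2(\Gamma_{\rm obs})} + \norm{\partial_{\Gn}\Gv}_{\GLL^2(\Gamma_{\rm obs})} + \norm{p}_{\LL^2(\Gamma_{\rm obs})} + \norm{\partial_{\Gn}p}_{\LL^2(\Gamma_{\rm obs})}$.

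Third, and this is the step that produces the logarithm, I would convert the weighted inequality into an interpolation (propagation of smallness) inequality on the interior subdomains $\Omega_\delta := \{x\in\Omega : \mathrm{dist}(x,\partial\Omega)>\delta\}$, of H\"older type $\norm{\Gv}_{\GHH^1(\Omega_\delta)} + \norm{p}_{\HH^1(\Omega_\delta)} \leq C\,e^{C\delta^{-b}}\,\varepsilon^{\theta}M^{1-\theta}$, where $\varepsilon$ is the observation quantity, $M := \norm{\Gv}_{\GHH^2(\Omega)} + \norm{p}_{\HH^2(\Omega)}$, $\theta\in(0,1)$ and $b>0$, the constant deteriorating as $\delta\to 0$ because the weight must be made steeper to reach closer to $\partial\Omega$. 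Since $N\leq 3$ the Sobolev embedding controls the thin boundary layer by $\norm{\Gv}_{\GHH^1(\Omega\setminus\Omega_\delta)} + \norm{p}_{\HH^1(\Omega\setminus\Omega_\delta)} \leq C\,\delta^{a}M$ for some $a>0$, so that
\begin{equation*}
\norm{\Gv}_{\GHH^1(\Omega)} + \norm{p}_{\HH^1(\Omega)} \leq C\,e^{C\delta^{-b}}\,\varepsilon^{\theta}M^{1-\theta} + C\,\delta^{a}M .
\end{equation*}
Balancing the two terms, that is, choosing $\delta \sim \left(\ln(M/\varepsilon)\right)^{-1/b}$, makes the right-hand side of order $M\left(\ln(M/\varepsilon)\right)^{-a/b}$; the construction can be arranged so that $a/b = 1/2$, which is exactly the claimed rate, while the normalization constant $d$ and the threshold $d_0$ in the statement merely guarantee that the argument of the logarithm stays bounded away from $1$.

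The main obstacle I expect is the boundary case. Establishing a Carleman estimate valid up to $\Gamma_{\rm obs}$ for the coupled, fourth-order problem, with observation restricted to the Cauchy data of $(\Gv,p)$, is far more delicate than the interior estimate: one must choose a weight adapted to the geometry of $\Gamma_{\rm obs}$, absorb the numerous boundary terms generated by integrating by parts against $\Delta^2$, and verify that the Stokes coupling indeed supplies the missing higher-order traces. A secondary difficulty, absent from the qualitative Fabre--Lebeau uniqueness theorem, is purely quantitative: one must track the explicit dependence of all constants on $\delta$ (equivalently on $\tau$) throughout the propagation and the boundary-layer step, since it is precisely this dependence that dictates the logarithmic, rather than merely continuous, modulus.
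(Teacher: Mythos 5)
You should first be aware that the paper does not prove this statement at all: it is quoted verbatim from Boulakia \emph{et al.}~\cite{BouEgl13-2} as prior work, precisely in order to be improved upon. So there is no internal proof to compare against; what I can do is assess your sketch against the known argument for this result and against the machinery the paper actually builds for its own, stronger theorems.

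Your outline is essentially the right reconstruction of the Boulakia-type argument: reduce to $\Delta p=0$ and $\Delta^{2}\Gv=\0$, run Carleman (or three-spheres) estimates for these scalar operators, propagate smallness on the interior subdomains $\Omega_{\delta}$, control the boundary layer by $\delta^{a}M$ using the a priori $\HH^{2}$ bound, and optimize in $\delta$ to get the $\bigl(\ln(M/\varepsilon)\bigr)^{-1/2}$ modulus. Two points deserve emphasis. First, your reduction is exactly the reason the quoted estimate needs observations of the pressure ($\|p\|_{\HH^1(\omega)}$, resp.\ $\|p\|_{\LL^2(\Gamma_{\rm obs})}+\|\partial_{\Gn}p\|_{\LL^2(\Gamma_{\rm obs})}$): once $p$ and $\Gv$ are decoupled into a harmonic and a biharmonic unknown, each must be observed separately, and the result no longer fits Fabre--Lebeau. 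The present paper's whole point is to avoid this by applying the Carleman estimate for the Laplacian to ${\rm curl}\,\Gv$ and to ${\rm div}\,\Gv-p$ (which satisfy $-\Delta({\rm curl}\,\Gv)={\rm curl}\,\Gf$ and $-\Delta({\rm div}\,\Gv-p)={\rm div}\,\Gf$), recovering $\Gv$ from ${\rm curl}({\rm curl}\,\Gv)-\nabla({\rm div}\,\Gv)$ and handling non-homogeneous boundary data by a domain-extension argument rather than a boundary Carleman estimate; this is what removes the pressure observation and upgrades the modulus from $(\ln)^{-1/2}$ to $(\ln)^{-1}$ for the velocity. Second, the two soft spots you yourself flag are genuine: a boundary Carleman estimate for $\Delta^{2}$ nominally requires traces up to order three, and showing that the Stokes coupling plus the $\LL^{2}(\Gamma_{\rm obs})$ Cauchy data of $(\Gv,p)$ really supply them (rather than requiring tangential derivatives of the data) needs a careful argument --- the published proof in fact sidesteps the fourth-order operator by treating $p$ harmonic with its Cauchy data first and then $-\nu\Delta\Gv=-\nabla p$ as a Poisson problem; and the claim that the exponents can be arranged so that $a/b=1/2$ is asserted, not derived. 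As a sketch of the cited theorem your proposal is sound; as a proof it leaves these two steps open, and as a route it is deliberately \emph{not} the one this paper follows.
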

As underlined by the authors themselves, this result does not depend exclusively on the needed observations~\eqref{CondFL01} or~\eqref{CondFL02} and then does not fit the Fabre and Lebeau's Theorem.
The first main results of the present paper are stability inequalities for the Oseen equations~\eqref{EqVitesseFluideOseen} which are quantified versions of Fabre and Lebeau's uniqueness Theorem (see Theorem~\ref{StabilityThmOseen} below) and, in this sense, improve the previous work of Boulakia~\textit{et al.} It allows to obtain analogous stability inequalities for the Navier-Stokes equations. Then, in a second step, we give examples of applications for some parameter identification problems as well as for some error estimates for numerical reconstruction methods.

\paragraph{Stability inequalities.}
In order to state our main theorem, we need some assumptions and notations. Here and in the following, $C>0$ denotes a generic constant which,
unless otherwise stated, only depends on the geometry and which may change from line to line, and~$K \geq e^e$ denotes a constant which satisfies:
\begin{equation}\label{IneqK}
\max \{1\, , \,\norm{\Gz_{1}}_{\GLL^{\infty}(\Omega)} \, , \, \norm{\nabla \Gz_{2}}_{\LL^r(\Omega)} \}
\leq \ln(\ln K).
\end{equation}
Moreover, $\omega$ denotes a nonempty open subset of $\Omega$  and $\Gamma_{\rm obs}$ denotes a nonempty open subset of $\partial\Omega$. In this paper, $\Gn$ is the outward unit normal to $\partial\Omega$ which is assumed to be of class $C^2$ and the stress tensor is defined by $\sigma (\Gu,p) \ov 2\nu  \MD(\Gu) - p \, \II $, where $\II$ is the identity matrix and $\MD(\Gy) \ov \frac{1}{2} \( \nabla \Gy + {}^t \nabla \Gy \)$ is the symmetrized gradient.

We prove (see Subsections~\ref{InagStabdistriObs} and~\ref{StabEstBoundaryObs}) the following 
\begin{theorem} \label{StabilityThmOseen}
Assume \eqref{Hypzi} and \eqref{IneqK} and that $(\Gv,p)\in {\bf H}^2(\Omega)\times \HH^1(\Omega)$ is a solution of the Oseen equations~\eqref{EqVitesseFluideOseen}. For any $M>0$ such that $\|\Gv\|_{{\bf H}^2(\Omega)}+\|p\|_{\HH^1(\Omega)} \leq M$, the following estimates hold:
\begin{equation} \label{ineqStabOseen0}
 \norm{\Gv}_{\GLL^2(\Omega)} \leq   C K \frac{ \, M}{\ln\left(\displaystyle 1+\frac{M}{\|\Gf\|_{\GLL^2(\Omega)} + \norm{d}_{\HH^1(\Omega)} +\|\Gv\|_{\GLL^2(\omega)}}\right)}
\end{equation}
and 
\begin{equation} \label{ineqStabOseen}
\norm{\Gv}_{\GLL^2(\Omega)} \leq   
C K \frac{ M}{\ln\left(\displaystyle 1+\frac{M}{\|\Gf\|_{{\bf L}^{2}(\Omega)} + \norm{d}_{\HH^1(\Omega)} +\|\Gv\|_{{\bf H}^{3/2}(\Gamma_{\rm obs})}+\left\|\sigma(\Gv,p){\bf n}\right\|_{{\bf H}^{1/2}(\Gamma_{\rm obs})}}\right)} .
\end{equation}
Moreover, we have
\BM \label{ineqStabOseen02}
  \norm{{\rm curl} \, \Gv}_{\(\LL^2(\Omega)\)^{2N-3}} +  \norm{ p - {\rm div} \, \Gv}_{\LL^2(\Omega)} \\
  \leq  C K \frac{ M}{ \( \ln\left(\displaystyle 1+\frac{M}{\|\Gf\|_{{\bf L}^{2}(\Omega)} + \norm{d}_{\HH^1(\Omega)} +\|\Gv\|_{{\bf H}^{3/2}(\Gamma_{\rm obs})}+\left\|\sigma(\Gv,p){\bf n}\right\|_{{\bf H}^{1/2}(\Gamma_{\rm obs})}}\right) \)^{1/2}} .
\end{multline}
\end{theorem}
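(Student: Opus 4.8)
The plan is to derive all three estimates from a Carleman inequality for the Oseen operator, converted into a logarithmic rate by an optimization over the large Carleman parameter. The first task is to produce such a weighted inequality, the obstruction being the nonlocal pressure. I would remove it by taking the curl of the momentum equation in~\eqref{EqVitesseFluideOseen}: setting $\Gw=\curl\Gv$, the contribution $\curl\nabla p$ vanishes and $\Gw$ solves a second-order system whose principal part is the Laplacian and whose lower-order coefficients are governed by $\norm{\Gz_{1}}_{\GLL^{\infty}(\Omega)}$ and $\norm{\nabla\Gz_{2}}_{\LL^{r}(\Omega)}$. For such a system one disposes of a Carleman estimate with weight $e^{2s\varphi}$, $\varphi=e^{\lambda\psi}$; hypotheses~\eqref{Hypzi}--\eqref{IneqK} are precisely what allows the lower-order terms to be absorbed into the principal part, at the price of a constant growing like $K$, which accounts for the prefactor $K$ in the conclusions. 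The pressure and the divergence constraint $\dive\Gv=d$ are then used a posteriori to recover $p$ from $\Gw$.

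Second, I would turn the Carleman estimate into a quantitative propagation of smallness. Inserting cut-offs supported in $\Omega$ and splitting the domain along the level sets of $\varphi$, the weighted inequality yields, for all large $s$, a bound of the schematic form
\begin{equation*}
\norm{\Gv}_{\GLL^2(\Omega)}\;\le\;CK\big(e^{Cs}\,\delta+s^{-1}M\big),\qquad s\ge s_{0},
\end{equation*}
where $M$ bounds $\|\Gv\|_{{\bf H}^2(\Omega)}+\|p\|_{\HH^1(\Omega)}$ and $\delta=\|\Gf\|_{\GLL^2(\Omega)}+\norm{d}_{\HH^1(\Omega)}+\|\Gv\|_{\GLL^2(\omega)}$ in the distributed case. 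The crucial feature is that the a priori term gains only a power of $s^{-1}$, not an exponential: this is the signature of the absence of observation near part of $\partial\Omega$, and is what produces a logarithmic rather than a H\"older rate. Optimizing the right-hand side over $s$ by balancing the two terms gives $s\sim c\,\ln(M/\delta)$ and exactly the bound $M/\ln(1+M/\delta)$ of~\eqref{ineqStabOseen0}.

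For the boundary estimate~\eqref{ineqStabOseen} the only modification is the observation region. I would extend the solution across $\Gamma_{\rm obs}$ by lifting the Cauchy data into a small outer neighborhood; since both the velocity trace and the normal stress $\sigma(\Gv,p)\Gn$ match across $\Gamma_{\rm obs}$, the extension is again an ${\bf H}^2\times\HH^1$ solution of an Oseen system in the enlarged domain, in which the outer neighborhood now plays the role of an interior observation set. The norms $\|\Gv\|_{{\bf H}^{3/2}(\Gamma_{\rm obs})}$ and $\|\sigma(\Gv,p)\Gn\|_{{\bf H}^{1/2}(\Gamma_{\rm obs})}$ are exactly those controlling this lift, so the previous optimization applies verbatim. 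The higher-order estimate~\eqref{ineqStabOseen02} I would obtain not from a new Carleman argument but by interpolation: writing $\eta$ for the right-hand side of~\eqref{ineqStabOseen}, the inequality $\norm{\Gv}_{\GHH^1(\Omega)}\le C\norm{\Gv}_{\GLL^2(\Omega)}^{1/2}\|\Gv\|_{{\bf H}^2(\Omega)}^{1/2}\le C\eta^{1/2}M^{1/2}$ bounds $\curl\Gv$ with exactly the square-root loss in the logarithm appearing in~\eqref{ineqStabOseen02}; for the pressure, the quantity $p-\dive\Gv$ satisfies an elliptic equation obtained by taking the divergence of the momentum equation, so that the control of $\Gv$ in the interpolated norm transfers to it in $\LL^2(\Omega)$ at the same rate.

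The main obstacle is the Carleman inequality itself, in its two delicate aspects: the non-homogeneous boundary data and the nonlocal pressure. The curl reduction eliminates $p$ from the principal system but generates boundary contributions on $\partial\Omega$ that are not among the observed quantities; one must show that these are either absorbed by the weight along the unobserved part of $\partial\Omega$ or dominated by the Cauchy data on $\Gamma_{\rm obs}$. Carrying the exponential weight correctly through the a posteriori reconstruction of $p$ from $\Gw$, while keeping the global constant linear in $K$, is the technical heart of the argument.
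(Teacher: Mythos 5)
Your overall architecture coincides with the paper's: a Carleman inequality for the Oseen system obtained from the curl/divergence reduction of the Stokes operator, a domain-extension device to handle non-homogeneous data, the extension of $(\Gv,p)$ across $\Gamma_{\rm obs}$ so that an outer collar plays the role of the interior observation set, the schematic bound $\norm{\Gv}_{\GLL^2(\Omega)}\le CK(e^{Cs}\delta+s^{-1}M)$, and the optimization $s\sim\ln(1+M/\delta)$. For \eqref{ineqStabOseen0} and \eqref{ineqStabOseen} this is essentially the paper's proof, and your interpolation $\norm{\Gv}_{\GHH^1(\Omega)}\le C\norm{\Gv}_{\GLL^2(\Omega)}^{1/2}\norm{\Gv}_{\GHH^2(\Omega)}^{1/2}$ is a legitimate, slightly different way to obtain the $\curl\Gv$ half of \eqref{ineqStabOseen02}.

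The genuine gap is the pressure half of \eqref{ineqStabOseen02}. You claim that, because $p-\dive\Gv$ satisfies an elliptic equation obtained by taking the divergence of the momentum equation, the control of $\Gv$ transfers to it in $\LL^2(\Omega)$ at the same rate. It does not: an elliptic equation with controlled right-hand side determines its solution only modulo harmonic functions, so some observation of the pressure itself must enter the argument. From $\nabla p=\Gf+\nu\Delta \Gv-(\Gz_1\cdot\nabla)\Gv-(\Gv\cdot\nabla)\Gz_2$ and the Ne\v{c}as inequality one controls $p-\bar p$ (with $\bar p$ the mean value) by $\norm{\Gf}_{\GLL^2(\Omega)}+K\norm{\Gv}_{\GHH^1(\Omega)}$, but the constant $\bar p$ must then be recovered from the trace of $p$ on $\Gamma_{\rm obs}$, itself extracted from $\sigma(\Gv,p)\Gn\cdot\Gn$, $\Gv$ and $d$; doing this through a trace-interpolation inequality degrades the logarithm from the stated power $1/2$ to $1/4$ and worsens the dependence on $K$. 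The paper avoids this loss by placing $s e^{\lambda\psi}|p-\dive\Gv|^2$ directly on the left-hand side of the Carleman inequality (estimate \eqref{CarlEstStokesPress}), with its own local observation term which, after the extension across $\Gamma_{\rm obs}$, is controlled by the Cauchy data. A secondary soft spot: you propose to literally take the curl of the momentum equation and read off a second-order system for $\curl\Gv$ with coefficients governed by $\norm{\Gz_1}_{\GLL^{\infty}(\Omega)}$; but $\curl ((\Gz_1\cdot\nabla)\Gv)$ involves $\nabla\Gz_1$, which is not assumed to exist. The convective terms must instead be kept as source terms and absorbed using a Carleman estimate that already carries $|\nabla\Gv|^2$ and $s^2\lambda^2e^{2\lambda\psi}|\Gv|^2$ on its left-hand side, which requires combining the curl equation with the identity $-\Delta\Gv=\curl \curl \Gv-\nabla \dive\Gv$ and the divergence constraint, as the paper does.
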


The above theorem allows us to obtain stability estimates for the Navier-Stokes equations. Let $(\Gz_{i},\pi_{i}) \in {\bf H}^2(\Omega)\times \HH^1(\Omega)$, $i=1,2$, satisfy
\begin{equation}\label{EQNS}
\left\lbrace 
\begin{array}{rcll}
- \nu \Delta \Gz_{i} + \( \Gz_{i} \cdot \nabla \) \Gz_{i}  + \nabla \pi_{i}    & = & \Gf   & \mbox{in }   \Omega   , \\
 {\rm div}\,  \Gz_{i} & = & d   & \mbox{in } \Omega .
\end{array}
\right.
\end{equation}
Note that the ${\bf H}^2$ regularity of $\Gz_1$, $\Gz_2$ implies \eqref{Hypzi}. We prove (see Subsection~\ref{sectionStabStokesNS}) the following
\begin{theorem} \label{ThmStabNS}
Suppose that $(\Gz_i,\pi_i)\in {\bf H}^2(\Omega)\times \HH^1(\Omega)$, $i=1,2$, are two solutions of ~\eqref{EQNS} which satisfy \eqref{IneqK} for some $K>e^e$. 
Then, for any $M>0$ such that  $\|\Gz_1-\Gz_2\|_{{\bf H}^2(\Omega)}+\|\pi_{1}-\pi_{2}\|_{\HH^1(\Omega)} \leq M$, the following estimates hold:
\begin{equation} \label{ineqStabNS0}
 \norm{\Gz_1-\Gz_2}_{\GLL^2(\Omega)}
 \leq  C K  \frac{M}{\ln\left(\displaystyle 1+\frac{M}{\|\Gz_1-\Gz_2\|_{\GLL^2(\omega)}}\right)}
\end{equation}
and
\begin{equation}\label{ineqStabNS}
 \norm{\Gz_1-\Gz_2}_{\GLL^2(\Omega)} \leq C K \frac{M}{\ln\left(\displaystyle 1+\frac{M}{\|\Gz_{1} - \Gz_{2}\|_{{\bf H}^{3/2}(\Gamma_{\rm obs})}+\left\|\sigma(\Gz_1,\pi_1){\bf n} - \sigma(\Gz_2,\pi_2){\bf n}\right\|_{{\bf H}^{1/2}(\Gamma_{\rm obs})}}\right)} .
\end{equation}
Moreover, we have
\begin{multline}\label{ineqStabNS02}
 \norm{{\rm curl} \( \Gz_{1} - \Gz_{2} \)}_{\(\LL^2(\Omega)\)^{2N-3}} + 
   \norm{ \pi_1-\pi_2 }_{\LL^2(\Omega)}  \\
 \leq C K \frac{M}{\left( \ln\left(\displaystyle 1+\frac{M}{\|\Gz_{1} - \Gz_{2}\|_{{\bf H}^{3/2}(\Gamma_{\rm obs})}+\left\|\sigma(\Gz_1,\pi_1){\bf n} - \sigma(\Gz_2,\pi_2){\bf n}\right\|_{{\bf H}^{1/2}(\Gamma_{\rm obs})}}\right) \right)^{1/2}} .
\end{multline}
\end{theorem}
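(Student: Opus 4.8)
The plan is to deduce Theorem~\ref{ThmStabNS} directly from the Oseen estimates of Theorem~\ref{StabilityThmOseen} through the classical subtraction trick for the quadratic nonlinearity. First I would set $\Gv \ov \Gz_1 - \Gz_2$ and $p \ov \pi_1 - \pi_2$, subtract the two copies of~\eqref{EQNS}, and regroup the two convective terms by means of the bilinear identity
\begin{equation*}
(\Gz_1 \cdot \nabla)\Gz_1 - (\Gz_2 \cdot \nabla)\Gz_2 = (\Gz_1 \cdot \nabla)\Gv + (\Gv \cdot \nabla)\Gz_2 .
\end{equation*}
This shows that $(\Gv,p)$ solves \emph{exactly} the Oseen system~\eqref{EqVitesseFluideOseen} with vanishing data $\Gf = \0$ and $d = 0$, the two transport fields being $\Gz_1$ and $\Gz_2$ in the roles they already play in~\eqref{EqVitesseFluideOseen}. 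The divergence constraint holds since $\mydiv \Gv = d - d = 0$.

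Next I would verify that $(\Gv,p)$ meets the hypotheses of Theorem~\ref{StabilityThmOseen}. Assumption~\eqref{IneqK} is inherited verbatim from the present statement. For~\eqref{Hypzi}, as noted before the statement, the ${\bf H}^2(\Omega)$ regularity of $\Gz_1,\Gz_2$ suffices: in dimension $N \leq 3$ the embedding ${\bf H}^2(\Omega) \hookrightarrow \GLL^{\infty}(\Omega)$ gives $\Gz_1 \in \GLL^{\infty}(\Omega)$, while ${\bf H}^2(\Omega) \hookrightarrow \GWW^{1,r}(\Omega)$ for the prescribed exponent $r$ gives $\Gz_2 \in \GWW^{1,r}(\Omega)$. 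Finally, because $\Gv = \Gz_1 - \Gz_2$ and $p = \pi_1 - \pi_2$, the hypothesis $\|\Gz_1-\Gz_2\|_{{\bf H}^2(\Omega)}+\|\pi_1-\pi_2\|_{\HH^1(\Omega)} \leq M$ is precisely the bound $\|\Gv\|_{{\bf H}^2(\Omega)}+\|p\|_{\HH^1(\Omega)} \leq M$ required by Theorem~\ref{StabilityThmOseen}.

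It then remains to apply the three conclusions~\eqref{ineqStabOseen0},~\eqref{ineqStabOseen} and~\eqref{ineqStabOseen02}, simplifying the right-hand sides with $\Gf = \0$ and $d = 0$. The distributed estimate~\eqref{ineqStabOseen0} yields~\eqref{ineqStabNS0} at once. For the boundary observation, the linearity of the stress tensor gives $\sigma(\Gv,p)\Gn = \sigma(\Gz_1,\pi_1)\Gn - \sigma(\Gz_2,\pi_2)\Gn$, so~\eqref{ineqStabOseen} becomes~\eqref{ineqStabNS}. Likewise, since $\mydiv \Gv = 0$ one has $p - \mydiv \Gv = \pi_1 - \pi_2$, and $\curl \Gv = \curl(\Gz_1 - \Gz_2)$, so~\eqref{ineqStabOseen02} turns into~\eqref{ineqStabNS02}.

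Since the reduction is purely algebraic, there is no genuine analytic obstacle here: the whole analytic content is carried by Theorem~\ref{StabilityThmOseen}. The only points deserving care are that the cross terms recombine \emph{exactly} into the linearized advection operator of~\eqref{EqVitesseFluideOseen}, with $\Gz_1$ acting on $\nabla\Gv$ and $\Gv$ acting on $\nabla\Gz_2$, and that the Sobolev embeddings genuinely deliver~\eqref{Hypzi} in both space dimensions $N=2$ and $N=3$.
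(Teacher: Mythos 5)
Your proposal is correct and follows exactly the paper's own argument: the paper likewise defines $(\Gv,p)=(\Gz_1-\Gz_2,\pi_1-\pi_2)$, observes that it solves the Oseen system~\eqref{EqVitesseFluideOseen} with $\Gf=\0$ and $d=0$ (using the same recombination of the convective terms), and applies Theorem~\ref{StabilityThmOseen}. The only difference is that you spell out the Sobolev-embedding check for~\eqref{Hypzi}, which the paper disposes of in a one-line remark after~\eqref{EQNS}.
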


We stress  that these stability estimates respect the well known unique continuation result of Fabre and Lebeau (see~\cite{FabLeb96}) since the observation in $\omega$ only concerns the velocity, and since the observation on $\Gamma_{\rm obs}$ only concerns $\restriction{\Gv}{\Gamma_{\rm obs}}$ and $\restriction{\sigma (\Gv,p) \Gn}{\Gamma_{\rm obs}}$. Indeed, Fabre and Lebeau's Theorem states that every velocity $\Gv$ solution of 
\BE \label{StokesIntro}
\{
\BA{rclcl}
- \Delta \Gv + \nabla p & = & \0 & & \Rin \; \Omega , \\
\mydiv \Gv & = & 0 & & \Rin \; \Omega,
\EA
\right.
\EE
which is identically zero in $\omega$ must be zero in $\Omega$ (and then $p$ is constant, see \cite[Proposition~1.1]{FabLeb96} for precise statements). In particular, no information is required on  $p$ to obtain this result. 
Moreover, as a direct consequence of the above mentioned uniqueness result, we can easily deduce that, if a smooth solution $(\Gv,p)$ of System~\eqref{StokesIntro} satisfies $\Gv = \0$ and~$\sigma (\Gv,p) \Gn = \0$ on $\Gamma_{\rm obs}$, then, $\Gv = \0$ and $p=0$ in $\Omega$.
Therefore, inequalities \eqref{ineqStabOseen0},  \eqref{ineqStabOseen} and~\eqref{ineqStabOseen02} are quantifications of Fabre and Lebeau's uniqueness theorem.

The proof of Theorem~\ref{StabilityThmOseen} is  based on  global Carleman inequalities for the Oseen system with non-homogeneous data. 
Quantitative results for unique continuation are classically obtained thanks to Carleman inequalities and three-spheres inequalities. 
We refer to the topical review of Alessandrini \textit{et al.} \cite{Alessandrini_topical_review} and to the references therein for elliptic cases; see also the works of Le Rousseau \textit{et al.} in~\cite{RouLeb12}. However, there is not so much results available on quantitative uniqueness for systems. About Stokes system we shall mention the works of Boulakia \textit{et~al.} in~\cite{BouEgl13, BouEgl13-2} for stability estimates and of Ballerini in~\cite{Ballerini2010} and Lin \textit{et~al.} in~\cite{Uhlmann2010} for some other connected results.

\paragraph{Applications to inverse problems.}

 We obtain stability inequalities for the problem of recovering Navier or Robin boundary coefficients. For this,
we assume that  $\Gamma_{\rm obs}$ and $\Gamma_0$  are two nonempty open subsets of $\partial\Omega$ such that $\Gamma_{\rm obs} \cap \Gamma_{0} = \emptyset $ and
 we consider on $\Gamma_0$ a non penetration condition given by $\Gz \cdot \Gn = 0$ and a friction law given by $2 \nu \[\MD(\Gz){\bf n}\]_{\tau} + \alpha \Gz  =  \0$ (subscript $\tau$ denotes the tangential component). The aim is to reconstruct the friction coefficient~$\alpha$ from Cauchy data on $\Gamma_{\rm obs}$. Thus, we consider two solutions $(\Gz_{i},\pi_{i})\in {\bf H}^2(\Omega)\times \HH^1(\Omega)$ ($i=1,2$) of the Navier-Stokes equations
\begin{equation}\label{EQNS2}
\left\lbrace 
\begin{array}{rcll}
- \nu \Delta \Gz_{i} + \( \Gz_{i} \cdot \nabla \) \Gz_{i}  + \nabla \pi_{i}    & = & \Gf   & \mbox{in }   \Omega   , \\
 {\rm div}\,  \Gz_{i} & = & d   & \mbox{in } \Omega ,
\end{array}
\right.
\end{equation}
associated to two friction coefficients ${\alpha_{i}}\in  \HH^{1/2}(\Gamma_0) \cap \LL^{\infty} (\Gamma_{0})$ ($i=1,2$) 
in the Navier type boundary conditions on $\Gamma_0$:
\begin{equation} \label{CondNavier}
\left\lbrace 
\begin{array}{rcll}
 \Gz_{i} \cdot \Gn & = & 0   & \mbox{on } \Gamma_0,  \\
2 \nu \[\MD(\Gz_{i}){\bf n}\]_{\tau} + \alpha_{i} \Gz_{i} & = & \0   & \mbox{on } \Gamma_0 .
\end{array}
\right.
\end{equation}
We also consider the reconstruction of the Robin coefficient, still denoted $\alpha$, in the case of the classical Robin boundary conditions on $\Gamma_0$ given by:
\begin{equation} \label{CondRobin}
\sigma(\Gz_{i},\pi_{i})\Gn + \alpha_{i} \Gz_{i}  =  \0  \quad  \mbox{on } \Gamma_0 .
\end{equation}
Notice that the $\HH^{1/2}(\Gamma_{0})$-regularity of $\alpha_{i}$ is necessary to have a $\GHH^2(\Omega) \times \HH^1(\Omega)$-regularity of the solutions. 
\begin{theorem} \label{ThmStabReconCoeff}
 Let ${\alpha_{i}}\in \HH^{1/2}(\Gamma_0) \cap \LL^{\infty} (\Gamma_{0})$, $i=1,2$ be two given coefficients. Let $(\Gz_i,\pi_i)\in {\bf H}^2(\Omega)\times \HH^1(\Omega)$, $i=1,2$, be two pairs solution of the Navier-Stokes equations~\eqref{EQNS2} with the boundary conditions~\eqref{CondNavier} or~\eqref{CondRobin} which satisfy \eqref{IneqK} for some $K\geq e^e$. Let $\mathcal{N}\ov \{ x \in \Gamma_0 \, , \, \Gz_{1}(x) = \0\,\mbox{ and }\, \Gz_{2}(x) = \0 \}$, assume that $\MK$ is a compact subset of~$\Gamma_0\backslash \mathcal{N} $ with a nonempty interior and let $m>0$ be a constant such that $\max(\abs{\Gz_{1}},\abs{\Gz_{2}}) \geq m$ on $\MK$. 
Then,  for any $M>0$ such that  $\|\Gz_{1}-\Gz_{2}\|_{{\bf H}^2(\Omega)}+\|\pi_{1}-\pi_{2}\|_{\HH^1(\Omega)} \leq M$, the following inequality holds:
\begin{multline}\label{ineqStabNavier}
 \norm{\alpha_{1} - \alpha_{2}}_{\LL^2(\MK)}  \\
 \leq  \frac{CK}{m} \frac{M}
 { \( \ln\left(\displaystyle 1+\frac{M}{\|\Gz_{1} - \Gz_{2}\|_{{\bf H}^{3/2}(\Gamma_{\rm obs})}+\left\| \sigma(\Gz_{1},\pi_{1})\Gn - \sigma(\Gz_{2},\pi_{2})\Gn\right\|_{{\bf H}^{1/2}(\Gamma_{\rm obs})}}\right) \)^{1/4}} .
\end{multline}
Here, the constant $C$ does not depend only on the geometry but also on $\norm{\alpha_{i}}_{\LL^{\infty}(\Gamma_{0})}$ for $i=1,2$.
\end{theorem}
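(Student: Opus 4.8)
The plan is to read off $\alpha_1-\alpha_2$ from the boundary conditions on $\Gamma_0$ and then to feed the resulting boundary traces of $\Gv\ov\Gz_1-\Gz_2$ into the interior log--stability estimate of Theorem~\ref{ThmStabNS}. Write $q\ov\pi_1-\pi_2$ and subtract the two boundary relations. In the Navier case~\eqref{CondNavier} one obtains on $\Gamma_0$
\begin{equation*}
2\nu[\MD(\Gv)\Gn]_\tau+\alpha_1\Gv+(\alpha_1-\alpha_2)\Gz_2=\0,
\end{equation*}
and, exchanging the roles of the two indices, $(\alpha_1-\alpha_2)\Gz_1=-2\nu[\MD(\Gv)\Gn]_\tau-\alpha_2\Gv$; in the Robin case~\eqref{CondRobin} the same manipulation, using the linearity of $\sigma$, gives $(\alpha_1-\alpha_2)\Gz_2=-\sigma(\Gv,q)\Gn-\alpha_1\Gv$ together with its symmetric counterpart. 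I would then split $\MK$ into $\MK_1\ov\{x\in\MK:\abs{\Gz_1(x)}\geq m\}$ and $\MK_2\ov\MK\setminus\MK_1\subseteq\{\abs{\Gz_2}\geq m\}$, which is licit precisely because $\max(\abs{\Gz_1},\abs{\Gz_2})\geq m$ on $\MK$, and on each piece divide by $\abs{\Gz_j}\geq m$. Recalling $\sigma(\Gv,q)\Gn=2\nu\MD(\Gv)\Gn-q\Gn$, this yields the pointwise bounds
\begin{equation*}
m\,\abs{\alpha_1-\alpha_2}\leq C\left(\abs{\nabla\Gv}+\abs{\Gv}\right)\ \text{(Navier)},\qquad m\,\abs{\alpha_1-\alpha_2}\leq C\left(\abs{\nabla\Gv}+\abs{q}+\abs{\Gv}\right)\ \text{(Robin)},
\end{equation*}
a.e. on $\MK$, where $C$ also depends on $\norm{\alpha_i}_{\LL^\infty(\Gamma_0)}$.

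Integrating over $\MK\subset\partial\Omega$ reduces the claim to controlling the boundary traces $\norm{\nabla\Gv}_{\LL^2(\MK)}$, $\norm{\Gv}_{\GLL^2(\MK)}$ (and $\norm{q}_{\LL^2(\MK)}$ in the Robin case) by the interior quantities that Theorem~\ref{ThmStabNS} bounds. The decisive tool is the multiplicative trace inequality coupled with Sobolev interpolation: since $\Gv\in\GHH^2(\Omega)$ with $\norm{\Gv}_{\GHH^2(\Omega)}\leq M$,
\begin{equation*}
\norm{\nabla\Gv}_{\LL^2(\partial\Omega)}^2\leq C\,\norm{\nabla\Gv}_{\LL^2(\Omega)}\norm{\nabla\Gv}_{\GHH^1(\Omega)}\leq C\,\norm{\Gv}_{\GHH^1(\Omega)}\norm{\Gv}_{\GHH^2(\Omega)}\leq C\,M^{3/2}\norm{\Gv}_{\GLL^2(\Omega)}^{1/2},
\end{equation*}
where the last step uses the interpolation inequality $\norm{\Gv}_{\GHH^1(\Omega)}\leq C\norm{\Gv}_{\GHH^2(\Omega)}^{1/2}\norm{\Gv}_{\GLL^2(\Omega)}^{1/2}$. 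Hence $\norm{\nabla\Gv}_{\LL^2(\partial\Omega)}\leq CM^{3/4}\norm{\Gv}_{\GLL^2(\Omega)}^{1/4}$, and the same scheme gives $\norm{\Gv}_{\GLL^2(\partial\Omega)}\leq CM^{1/4}\norm{\Gv}_{\GLL^2(\Omega)}^{3/4}$ as well as, from $\norm{q}_{\HH^1(\Omega)}\leq M$, the pressure trace bound $\norm{q}_{\LL^2(\partial\Omega)}\leq CM^{1/2}\norm{q}_{\LL^2(\Omega)}^{1/2}$.

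It remains to insert the logarithmic stability bounds. Put $L\ov\ln\!\left(1+M/\delta\right)$, where $\delta$ is the Cauchy--data term on $\Gamma_{\rm obs}$ appearing in~\eqref{ineqStabNS} and~\eqref{ineqStabNS02}. Estimate~\eqref{ineqStabNS} gives $\norm{\Gv}_{\GLL^2(\Omega)}\leq CKM/L$, so $\norm{\nabla\Gv}_{\LL^2(\partial\Omega)}\leq CM^{3/4}(KM/L)^{1/4}\leq CKM/L^{1/4}$, while the $\Gv$--trace contributes the strictly smaller $L^{-3/4}$; in the Robin case~\eqref{ineqStabNS02} gives $\norm{q}_{\LL^2(\Omega)}\leq CKM/L^{1/2}$, whence $\norm{q}_{\LL^2(\partial\Omega)}\leq CKM/L^{1/4}$, matching the $\nabla\Gv$ contribution. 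Collecting the pieces and dividing by $m$ yields exactly~\eqref{ineqStabNavier} with the exponent $1/4$. The crux, and the step I expect to demand the most care, is this trace/interpolation passage from the interior estimate to the boundary traces of $\Gv$, $\nabla\Gv$ and $q$ on $\MK$: the exponent $1/4$ in the final logarithm is produced precisely by the $1/4$ power of $\norm{\Gv}_{\GLL^2(\Omega)}$ (and, in the Robin case, the $1/2$ power of $\norm{q}_{\LL^2(\Omega)}$ against the $L^{-1/2}$ rate), so one must keep the interpolation exponents and the powers of $M$ and $K$ aligned throughout.
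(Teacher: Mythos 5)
Your proposal is correct and follows essentially the same route as the paper: isolate $(\alpha_1-\alpha_2)\Gz_j$ from the difference of the boundary conditions on $\Gamma_0$, bound $\norm{\alpha_1-\alpha_2}_{\LL^2(\MK)}$ by the boundary traces of $\Gv$, $\nabla\Gv$ (and $q$ in the Robin case), convert these to interior norms via the multiplicative trace inequality combined with interpolation to produce the exponent $1/4$, and conclude with the logarithmic estimates of Theorem~\ref{ThmStabNS}. Your splitting of $\MK$ into $\MK_1$ and $\MK_2$ is merely a more careful rendering of the paper's ``without loss of generality'' step.
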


\begin{remark}
We stress the fact that the previous estimate~\eqref{ineqStabNavier} depends on the solutions~$\Gz_{1}$ and $\Gz_{2}$ through the choice of the compact set $\MK$ and the constant $m$. 
To complete this result, it would be interesting to obtain a quantitative estimate of the vanishing rate of~$\Gz$, like what is done in~\cite{AleSin06} in the case of the Laplace equation.
\end{remark}

\begin{remark}
 Note that the assumptions of Theorem~\ref{ThmStabReconCoeff} guarantee that $\Gz_1$, $\Gz_1$ are continuous. Then if $\mathcal{K}$ exists, the constant $m>0$ exists and depends on $\Gz_1$, $\Gz_1$ on $\MK$. The existence of $\MK$ is known in the case of Robin boundary conditions \eqref{CondRobin} if $\Gz_1$ (or $\Gz_2$) is not identically equal to zero in $\Omega$. It is an easy consequence of Fabre and Lebeau's theorem. But in the case of Navier conditions \eqref{CondNavier} and if one of the $\Gz_i$ is not trivial, the existence of a nonempty  open subset of $\Gamma_0$ on which $\Gz_1$ and $\Gz_2$ both vanish is a difficult issue. Indeed, it reduces to study the existence of a non trivial vector field $\Gv$ solution to an homogeneous Oseen equation (see \eqref{EqStabNavier} below) and such that $\Gv=\partial_{\Gn} \Gv=\0$ on a nonempty open subset of~$\Gamma_0$. The difficulty relies on the fact that, unlike the Robin case, no additional information on the pressure is available.
\end{remark}

\begin{remark}
We can obtain a better estimate assuming more regularity on ~$(\Gv,p)$. More precisely, for $k\geq 2$ and $n\in \mathbb{N}$ suppose that $(\Gv,p) \in \GHH^{k}(\Omega) \times \HH^{k-1}(\Omega)$, $k\geq 2$ and $\alpha_i\in \HH^{n}(\mathcal{K})$, $i=1,2$. Then, using an interpolation argument, we can obtain for any $M>0$ and $N>0$ such that  $ \|\Gv\|_{{\bf H}^2(\Omega)}+\|p\|_{\HH^{1}(\Omega)} \leq M$ and $ \|\Gv\|_{{\bf H}^k(\Omega)}+\|p\|_{\HH^{k-1}(\Omega)} \leq N$ that for all $\theta\in [0,1]$ (see Remark \ref{EstHk}):
\begin{multline}\label{stabInHk}
 \norm{\alpha_{1} - \alpha_{2}}_{\HH^{\theta n}(\mathcal{K})}  \\
  \leq  \frac{
 \left(\frac{CK}{m} N\right)^{1-\theta}\norm{\alpha_{1} - \alpha_{2}}_{\HH^{n}(\mathcal{K})}^\theta}{ \( \ln\left(\displaystyle 1+\frac{M}{\|\Gv_{1} - \Gv_{2}\|_{{\bf H}^{3/2}(\Gamma_{\rm obs})}+\left\| \sigma(\Gv_{1},p_{1})\Gn - \sigma(\Gv_{2},p_{2})\Gn\right\|_{{\bf H}^{1/2}(\Gamma_{\rm obs})}}\right) \)^{\frac{(2k-3)(1-\theta)}{2k}}} .
\end{multline}
For $k=3$ and $\theta=n=0$, we obtain a result similar to the one presented in~\cite[Theorem~4.3]{BouEgl13}. 
\end{remark}
Theorem \ref{ThmStabReconCoeff}, which completes the previous results given by Boulakia \textit{et al} in~\cite{BouEgl13,BouEgl13-2}, finds applications in the modeling of biological problems as blood flow in the cardiovascular system (see~\cite{QuaVen03} and~\cite{VigFig06}) or airflow in the lungs (see~\cite{BafGra10}). 
For the Laplace equation, these kind of stability estimates for the Robin coefficient have been widely studied: see for example the works of Chaabane \textit{et~al.} in~\cite{ChaJao99,ChaFel04}, Alessandrini \textit{et~al.} in~\cite{AleDel03}, Sincich in~\cite{Sin07}, Bellassoued \textit{et~al.} in~\cite{BelChe08} and Cheng \textit{et~al.} in~ \cite{CheCho08}.

Finally, we present another application of our stability estimates in the context of numerical reconstruction methods.  More precisely, we focus on the stable reconstruction of the solution  of a data completion problem (also known as Cauchy problem) for the Stokes equations: for given $(\Gg_D,\Gg_N) \in 
 \GHH^{3/2}(\Gamma_{\rm obs}) \times  \GHH^{1/2}(\Gamma_{\rm obs})$, we search $(\Gv , p ) \in \GHH^2(\Omega) \times \HH^1(\Omega)  $
solution of
\begin{equation} \label{PbStokesErrEstim}
\left\lbrace
\begin{array}{rclcl}
-\nu \Delta \Gv  + \nabla p& =& \Gf  & & \mbox{ in } \, \Omega , \\
 {\rm div}\ \Gv& = &0  & & \mbox{ in } \, \Omega,
 \end{array}
\right.
\end{equation}
and such that
$$ \Gv  = \Gg_D\quad \mbox{ and }\quad \sigma (\Gv,p) \Gn = \Gg_N\quad \mbox{ on }\quad \Gamma_{\rm obs}.$$
Estimates \eqref{ineqStabOseen} and \eqref{ineqStabOseen02}
imply the uniqueness of the solution of the data completion problem.
However, there exists Cauchy data
$(\Gg_D, \Gg_N)$ for which it does not admit any solution. Hence, regularization methods are needed to stably reconstruct $(\Gv,p)$ from
$(\Gg_D,\Gg_N)$. We study two standard regularization methods: a quasi-reversibility regularization and a penalized Kohn-Vogelius regularization.

In the quasi-reversibility method, we consider, for $\varepsilon>0$, the following  variational problem: find $\(\Gv_\varepsilon , p_\varepsilon\) \in \GHH^2(\Omega) \times \HH^1(\Omega)$ such that $\Gv_\varepsilon = \Gg_D$ on~$\Gamma_{\rm obs}$, $\sigma(\Gv_\varepsilon,p_\varepsilon) \Gn =  \Gg_N$ on~$\Gamma_{\rm obs}$, and for all $\( \Gw , q \) \in \GHH^2(\Omega) \times \HH^1(\Omega)$ such that $\Gw = \0 $ and $\sigma(\Gw,q) \Gn = \0$ on $\Gamma_{\rm obs}$, we have 
\begin{multline} \label{MethodeQuasiRevers}
 \int_\Omega  (-\nu \Delta \Gv_\varepsilon + \nabla p_\varepsilon)\cdot (-\nu \Delta \Gw + \nabla q) \, \dd x  
 +  \Big({\rm div}(\Gv_\varepsilon), {\rm div}(\Gw) \Big)_{\HH^1(\Omega)} \\
 + \varepsilon (\Gv_\varepsilon,\Gw)_{\GHH^2(\Omega)} 
+ \varepsilon (p_\varepsilon,q)_{\HH^1(\Omega)} = 
\int_\Omega \Gf  \cdot (-\nu \Delta \Gw + \nabla q)\, \dd x.
\end{multline}

The penalized Kohn-Vogelius approach that we consider here consists in, for $\varepsilon>0$ and $\Gamma_{\rm obs}^C \ov \partial\Omega\backslash\overline{\Gamma_{\rm obs}}$, defining the functional $F_\varepsilon \, : \, \GHH^{1/2}(\Gamma_{\rm obs}^C) \times \GHH^{3/2}(\Gamma_{\rm obs}^C) \rightarrow \Rr$ given by
\begin{multline*}
F_\varepsilon (\Gvarphi_N,\Gpsi_D) \ov
\vert \Gv_{\Gvarphi_N} - \Gv_{\Gpsi_D} \vert_{\GHH^2(\Omega)}^2  + \vert \Gv_{\Gvarphi_N} - \Gv_{\Gpsi_D} \vert_{\GHH^1(\Omega)}^2  \\
+ \varepsilon \Vert \Gv_{\Gvarphi_N},p_{\Gvarphi_N} \Vert_{\GHH^2(\Omega) \times \HH^1(\Omega)}^2 + \varepsilon \Vert \Gv_{\Gpsi_D},p_{\Gpsi_D} \Vert_{\GHH^2(\Omega) \times \HH^1(\Omega)}^2 ,
\end{multline*}
where $\vert . \vert_{\GHH^i(\Omega)}$ is the $\GHH^i$-seminorm ($i=1,2$, see page~\pageref{PbStokesKV} for definition) and where $(\Gv_{\Gvarphi_N},p_{\varphi_{N}}) \in \GHH^2(\Omega) \times \HH^1(\Omega)$ and $(\Gv_{\Gpsi_D},p_{\psi_{D}}) \in \GHH^2(\Omega) \times \HH^1(\Omega)$ are the respective solutions of 
\begin{equation*}
\left\lbrace
\begin{array}{rcll}
\!\!-\nu \Delta \Gv_{\Gvarphi_N} + \nabla p_{\varphi_{N}}& =& \Gf & \! \mbox{ in } \Omega , \\
 {\rm div}\ \Gv_{\Gvarphi_N}& = &0 &\! \mbox{ in } \Omega , \\
 \Gv_{\Gvarphi_N} & = &\Gg_D &\! \mbox{ on } \Gamma_{\rm obs}  , \\
 \sigma(\Gv_{\Gvarphi_N},p_{\varphi}) \Gn &=& \Gvarphi_N &\! \mbox{ on } \Gamma_{\rm obs}^C ,
 \end{array}
\right.
\mbox{and }
\left\lbrace
\begin{array}{rcll}
\!\!-\nu \Delta \Gv_{\Gpsi_D} + \nabla p_{\psi_{D}} & =& \Gf &\! \mbox{ in } \Omega , \\
 {\rm div}\ \Gv_{\Gpsi_D}& = &0 &\! \mbox{ in } \Omega , \\
 \sigma(\Gv_{\Gpsi_D},p_{\psi_{D}}) \Gn &=& \Gg_N  & \!\mbox{ on } \Gamma_{\rm obs}  , \\
 \Gv_{\Gpsi_D} & = &\Gpsi_D  & \! \mbox{ on } \Gamma_{\rm obs}^C .
 \end{array}
\right. 
\end{equation*}
Then, we define $(\Gv_{\varepsilon},p_{\varepsilon}) \ov (\Gv_{\Gvarphi_{N}^\varepsilon},p_{\Gpsi_{D}^\varepsilon})$ where $(\Gvarphi_{N}^\varepsilon,\Gpsi_{D}^\varepsilon) \in \GHH^{1/2}(\Gamma_{\rm obs}^C) \times \GHH^{3/2}(\Gamma_{\rm obs}^C)$ is such that
\begin{equation} \label{eqn_Feps}
F_\varepsilon(\Gvarphi_{N}^\varepsilon,\Gpsi_{D}^\varepsilon)  = \inf_{(\Gvarphi_N,\Gpsi_D) \in \GHH^{1/2}(\Gamma_{\rm obs}^C) \times \GHH^{3/2}(\Gamma_{\rm obs}^C)} F_\varepsilon(\Gvarphi_N,\Gpsi_D) .
\end{equation}
For this second method, we  specify that we assume $\overline{\Gamma_{\rm obs}} \cap \overline{\Gamma_{\rm obs}^C} = \emptyset$,  for instance $\Gamma_{\rm obs} $ could be one of the connected components of $\partial\Omega$.

For any $(\Gg_D,\Gg_N) \in \GHH^{3/2}(\Gamma_{\rm obs}) \times  \GHH^{1/2}(\Gamma_{\rm obs})$, both the quasi-reversibility problem \eqref{MethodeQuasiRevers} and the Kohn-Vogelius minimization problem
\eqref{eqn_Feps} admits a unique solution $(\Gv_\varepsilon, p_\varepsilon)$. Moreover, if the initial data completion problem admits a solution $(\Gv,p)$, then $\Gv_\varepsilon$ converges to $\Gv$ strongly in $\GHH^2(\Omega)$ and $p_\varepsilon$ converges to $p$ strongly in $\HH^1(\Omega)$. Furthermore, the stability estimates we obtain in the present paper (proved in Section~\ref{sectionErrorEsti}) 
provide the rate of convergence of both methods (for a survey on the connection between stability estimates and rates of convergence
of regularization methods, we refer to \cite{Klibanov_review}):
\begin{theorem} \label{ThmRateCv}
For any $M>0$ such that  $ \|\Gv\|_{{\bf H}^2(\Omega)}+\|p\|_{\HH^{1}(\Omega)} \leq M$, where $(\Gv,p)$ is the exact solution of the data completion problem~\eqref{PbStokesErrEstim}, we have the following error estimates for both quasi-reversibility method and penalized Kohn-Vogelius method:
$$
 \Vert \Gv_\varepsilon - \Gv \Vert_{\GLL^2(\Omega)}
 \leq \frac{M}{\ln(1 + \frac{M}{ \sqrt{\varepsilon} })}, \quad 
 \Vert \Gv_\varepsilon - \Gv \Vert_{\GHH^1(\Omega)}
 \leq \frac{M}{\(\ln(1 + \frac{M}{ \sqrt{\varepsilon} })\)^{1/2}}
$$
and
$$
 \Vert p_\varepsilon - p \Vert_{\LL^2(\Omega)} \leq \frac{M}{\(\ln(1 + \frac{M}{ \sqrt{\varepsilon} })\)^{1/2}} .
$$
\end{theorem}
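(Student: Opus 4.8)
The plan is to reduce both error estimates to the pure-Stokes stability inequalities \eqref{ineqStabOseen} and \eqref{ineqStabOseen02} of Theorem~\ref{StabilityThmOseen}, applied to the error $(\Gw,q) \ov (\Gv_\varepsilon - \Gv, p_\varepsilon - p)$. In this Cauchy problem we are in the setting $\Gz_1 = \Gz_2 = \0$, so that \eqref{IneqK} holds with the fixed choice $K = e^e$ and the factor $K$ is absorbed into the generic constant $C$. Granting the well-posedness and convergence of both schemes stated just above, the pair $(\Gw,q)$ solves a homogeneous Oseen system of the form \eqref{EqVitesseFluideOseen} with some residual source $\Gf_\varepsilon$ and residual divergence $d_\varepsilon$. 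The whole point is to show that the datum appearing in the denominator of \eqref{ineqStabOseen}--\eqref{ineqStabOseen02}, namely $\norm{\Gf_\varepsilon}_{\GLL^2(\Omega)} + \norm{d_\varepsilon}_{\HH^1(\Omega)} + \norm{\Gw}_{\GHH^{3/2}(\Gamma_{\rm obs})} + \norm{\sigma(\Gw,q)\Gn}_{\GHH^{1/2}(\Gamma_{\rm obs})}$, is of order $\sqrt\varepsilon$, while the numerator $\norm{\Gw}_{\GHH^2(\Omega)} + \norm{q}_{\HH^1(\Omega)}$ stays of order $M$; the logarithmic rates in the statement are then immediate.

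For the quasi-reversibility method I would first observe that the constraints built into \eqref{MethodeQuasiRevers} force $\Gw = \0$ and $\sigma(\Gw,q)\Gn = \0$ on $\Gamma_{\rm obs}$, so the two boundary terms vanish identically and only the interior residuals $\Gf_\varepsilon = -\nu\Delta\Gv_\varepsilon + \nabla p_\varepsilon - \Gf$ and $d_\varepsilon = \dive\Gv_\varepsilon$ survive. The key step is to test the optimality of $(\Gv_\varepsilon,p_\varepsilon)$ against the exact solution $(\Gv,p)$, which is itself admissible for \eqref{MethodeQuasiRevers} and annihilates both the least-squares and the divergence terms; comparing the quadratic energy underlying \eqref{MethodeQuasiRevers} gives $\norm{\Gf_\varepsilon}_{\GLL^2(\Omega)}^2 + \norm{d_\varepsilon}_{\HH^1(\Omega)}^2 \leq \varepsilon(\norm{\Gv}_{\GHH^2(\Omega)}^2 + \norm{p}_{\HH^1(\Omega)}^2) \leq \varepsilon M^2$, while the penalization term yields $\norm{\Gv_\varepsilon}_{\GHH^2(\Omega)} + \norm{p_\varepsilon}_{\HH^1(\Omega)} \leq C M$, and hence $\norm{\Gw}_{\GHH^2(\Omega)} + \norm{q}_{\HH^1(\Omega)} \leq C M$.

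For the penalized Kohn--Vogelius method the roles are exchanged. Here I would work with $\Gw \ov \Gv_{\Gvarphi_N^\varepsilon} - \Gv$, which now solves the homogeneous Stokes system exactly (so $\Gf_\varepsilon = \0$ and $d_\varepsilon = 0$) and satisfies $\Gw = \0$ on $\Gamma_{\rm obs}$ by construction. The residual Cauchy datum on $\Gamma_{\rm obs}$ is rewritten, using $\sigma(\Gv_{\Gpsi_D^\varepsilon},p_{\Gpsi_D^\varepsilon})\Gn = \Gg_N$ there, as $\sigma(\Gw,q)\Gn = \sigma(\Gv_{\Gvarphi_N^\varepsilon})\Gn - \sigma(\Gv_{\Gpsi_D^\varepsilon})\Gn$, and is therefore controlled by the trace theorem through $\abs{\Gv_{\Gvarphi_N^\varepsilon} - \Gv_{\Gpsi_D^\varepsilon}}_{\GHH^2(\Omega)}$. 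Testing the minimality \eqref{eqn_Feps} against the competitor furnished by the exact Cauchy data of $(\Gv,p)$ (for which the two state solutions both coincide with $(\Gv,p)$ and the seminorm terms vanish) yields $F_\varepsilon \leq \varepsilon M^2$, whence $\abs{\Gv_{\Gvarphi_N^\varepsilon} - \Gv_{\Gpsi_D^\varepsilon}}_{\GHH^2(\Omega)} \leq \sqrt\varepsilon\, M$, so that $\norm{\sigma(\Gw,q)\Gn}_{\GHH^{1/2}(\Gamma_{\rm obs})} \leq C\sqrt\varepsilon\, M$; the $\varepsilon$-penalization again gives $\norm{\Gw}_{\GHH^2(\Omega)} + \norm{q}_{\HH^1(\Omega)} \leq C M$.

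It then remains to feed these bounds into the stability inequalities. Inserting a denominator datum of size $C\sqrt\varepsilon M$ and a numerator $CM$ into \eqref{ineqStabOseen} gives the velocity estimate $\norm{\Gw}_{\GLL^2(\Omega)} \leq C M/\ln(1 + M/\sqrt\varepsilon)$ (the generic constant and the precise argument of the logarithm being harmless for the asserted rate). The pressure estimate comes from \eqref{ineqStabOseen02}: since $\norm{q - \dive\Gw}_{\LL^2(\Omega)} \leq C M/(\ln(1+M/\sqrt\varepsilon))^{1/2}$ while $\dive\Gw = d_\varepsilon = O(\sqrt\varepsilon)$ is dominated by the logarithmic rate, we obtain $\norm{q}_{\LL^2(\Omega)} \leq C M/(\ln(1+M/\sqrt\varepsilon))^{1/2}$. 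Finally the $\GHH^1$ velocity estimate follows by interpolation between the $\GLL^2$ bound just obtained and the uniform bound $\norm{\Gw}_{\GHH^2(\Omega)} \leq CM$, via $\norm{\Gw}_{\GHH^1(\Omega)} \leq C\norm{\Gw}_{\GLL^2(\Omega)}^{1/2}\norm{\Gw}_{\GHH^2(\Omega)}^{1/2}$, which turns the rate $1$ into the rate $1/2$. The main obstacle in this program is establishing the $\sqrt\varepsilon$ control of the denominator datum, i.e. correctly identifying the exact solution as an admissible competitor in each variational problem and converting the resulting energy bound into control of the residuals (interior ones for quasi-reversibility, the boundary Cauchy mismatch via the trace theorem for Kohn--Vogelius); the remaining steps are routine once the Stokes stability estimates of Theorem~\ref{StabilityThmOseen} are granted.
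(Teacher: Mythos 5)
Your overall strategy coincides with the paper's: show that the $\GHH^2\times\HH^1$ norm of the error stays of order $M$ while the interior and Cauchy residuals are of order $\sqrt\varepsilon$, then feed this into the Stokes stability estimates \eqref{ineqStabOseen}--\eqref{ineqStabOseen02}. The quasi-reversibility half is essentially the paper's argument: your energy comparison $J_\varepsilon(\Gv_\varepsilon,p_\varepsilon)\le J_\varepsilon(\Gv,p)=\varepsilon\Vert(\Gv,p)\Vert^2$ is equivalent to the paper's use of $(\Gv_\varepsilon-\Gv,p_\varepsilon-p)$ as a test function in \eqref{MethodeQuasiRevers} and yields exactly \eqref{QR_4} and \eqref{QR_3}; the interpolation $\norm{\Gw}_{\GHH^1(\Omega)}\le C\norm{\Gw}_{\GLL^2(\Omega)}^{1/2}\norm{\Gw}_{\GHH^2(\Omega)}^{1/2}$ and the recovery of $\norm{q}_{\LL^2}$ from $\norm{q-\dive\Gw}_{\LL^2}$ are fine.

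The Kohn--Vogelius half, however, contains an internal inconsistency. You claim simultaneously that the error solves the \emph{homogeneous} Stokes system ($\Gf_\varepsilon=\0$) and that the Cauchy mismatch on $\Gamma_{\rm obs}$ reduces to a quantity controlled by $\abs{\Gv_{\Gvarphi_N^\varepsilon}-\Gv_{\Gpsi_D^\varepsilon}}_{\GHH^2(\Omega)}$ alone; these two claims require pairing $\Gv_{\Gvarphi_N^\varepsilon}-\Gv$ with two \emph{different} pressures. If you take $q=p_{\Gvarphi_N^\varepsilon}-p$, the interior equation is indeed homogeneous, but the stress mismatch is $2\nu\MD(\Gv_{\Gvarphi_N^\varepsilon}-\Gv_{\Gpsi_D^\varepsilon})\Gn-(p_{\Gvarphi_N^\varepsilon}-p_{\Gpsi_D^\varepsilon})\Gn$, and the pressure difference is \emph{not} $O(\sqrt\varepsilon)$: the functional $F_\varepsilon$ only controls velocity seminorms plus $\varepsilon$ times full norms, so $\norm{p_{\Gvarphi_N^\varepsilon}-p_{\Gpsi_D^\varepsilon}}_{\HH^1(\Omega)}$ is merely $O(M)$ (the two pressures may drift apart by an $O(1)$ constant, as the convergence proof makes explicit), and the rate is lost. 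The paper instead takes the mixed pair $(\Gv_\varepsilon,p_\varepsilon)\ov(\Gv_{\Gvarphi_N^\varepsilon},p_{\Gpsi_D^\varepsilon})$, so that the boundary mismatch becomes $\sigma(\Gv_{\Gvarphi_N^\varepsilon},p_{\Gpsi_D^\varepsilon})\Gn-\sigma(\Gv_{\Gpsi_D^\varepsilon},p_{\Gpsi_D^\varepsilon})\Gn=2\nu\MD(\Gv_{\Gvarphi_N^\varepsilon}-\Gv_{\Gpsi_D^\varepsilon})\Gn$, which is $O(\sqrt\varepsilon)$ in $\GHH^{1/2}(\Gamma_{\rm obs})$ by \eqref{proof_conv_KV_1}; the price is a nonzero interior residual $\Gf_\varepsilon=\nabla(p_{\Gpsi_D^\varepsilon}-p_{\Gvarphi_N^\varepsilon})$, which is nonetheless $O(\sqrt\varepsilon)$ in $\GLL^2(\Omega)$ because the two momentum equations in \eqref{PbStokesKV} give $\nabla(p_{\Gpsi_D^\varepsilon}-p_{\Gvarphi_N^\varepsilon})=\nu\Delta(\Gv_{\Gpsi_D^\varepsilon}-\Gv_{\Gvarphi_N^\varepsilon})$. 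This choice of pressure, and the accompanying control of the induced interior residual, is the step missing from your plan (and it is also what makes the stated estimate on $p_\varepsilon-p=p_{\Gpsi_D^\varepsilon}-p$, rather than on $p_{\Gvarphi_N^\varepsilon}-p$, the right one to prove).
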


\paragraph{Notations.}
All along this paper, $\Omega$ is a nonempty bounded open subset of $\mathbb{R}^N$ ($N=2$ or $N=3$) with a boundary $\partial\Omega$ of class $C^2$, $\omega$ is a nonempty open subset of $\Omega$, $\Gamma_{\rm obs} $ and~$\Gamma_0$ are nonempty open subsets of~$\partial\Omega$, $\Gamma_{\rm obs} \cap \Gamma_{0} = \emptyset$, and $\Gamma_{\rm obs}^C$ denotes the complement of $\Gamma_{\rm obs}$, namely $\Gamma_{\rm obs}^C \ov \partial\Omega\backslash\overline{\Gamma_{\rm obs}}$.

We here summarize the needed notations in the case $N=3$ which can be easily adapted for $N=2$. 
We denote by $\Gn=\,^t(\nn_1,\nn_2,\nn_3)$ the outward unit normal to $\partial\Omega$ which has a~$C^1$~extension to a neighborhood of $\partial\Omega$. Above and in the following $\,^t$ denotes the transpose. For a scalar function $w$ or a vector field $\Gy=\,^t(y_1,y_2,y_3)$, we define $\nabla w \ov \,^t(\partial_{x_1}w,\partial_{x_2}w,\partial_{x_3}w)$,  $\nabla \Gy\ov (\partial_{x_j} y_i)_{1\leq i,j\leq 3}$ and $\mydiv \Gy \ov \sum_{i=1}^3 \partial_{x_{i}} y_{i}$. Moreover, on~$\partial\Omega$, we define the normal derivatives 
$\frac{\partial w}{\partial \Gn}\ov (\nabla w)\cdot \Gn$ and $\frac{\partial \Gy}{\partial \Gn}\ov (\nabla \Gy) \Gn$ and the tangential gradients $\nabla_\tau w \ov \nabla w- \frac{\partial w}{\partial \Gn} \Gn $ and $\nabla_\tau \Gy \ov  \,^t(\nabla_\tau y_1, \nabla_\tau y_2, \nabla_\tau y_3)$. We also introduce the notations
$\Gy_{\nn}\ov \(\Gy\cdot \Gn\)\Gn$ and
$\Gy_\tau \ov \Gy-\Gy_{\nn} $ for the normal and 
the tangential components of $\Gy$ on $\partial\Omega$. 
The divergence of $\Gy$ is defined by 
${\rm div}\, \Gy\ov \sum_{j=1}^3 \partial_{x_j} y_j$  and the curl of $w$ or $\Gy$ is defined by
$$
{\rm curl}\, \Gy=\partial_{x_1}y_{2}-\partial_{x_2}y_{1}\quad\mbox{ and }\quad {\rm curl}\, w= \vct{\partial_{x_2}w}{-\partial_{x_1}w}\quad\mbox{ if $N=2$},
$$
and
$$
{\rm curl}\, \Gy\ov\vctt{\displaystyle \partial_{x_2}y_{3}-\partial_{x_3}y_{2}}{\displaystyle \partial_{x_3}y_{1}-\partial_{x_1}y_{3}}{\displaystyle \partial_{x_1}y_{2}-\partial_{x_2}y_{1}}\quad\mbox{ if $N=3$}.
$$
We will also need to use the tangential divergence operator on $\partial\Omega$ that we denote by ${\rm div}_\tau$. We recall that $\MD(\Gy) \ov \frac{1}{2} \( \nabla \Gy + {}^t \nabla \Gy \)$ denotes the symmetrized gradient and $\sigma (\Gy,p) \ov 2 \nu  \MD(\Gy) - p \, \II $ the stress tensor, where $\II$ denotes the identity matrix and $\nu >0$ is the constant which represents the kinematic viscosity of the fluid we consider.

For $r\geq 0$ we denote by $\LL^2(\Omega)$, $\LL^2(\partial\Omega)$, $\HH^r(\Omega)$, $\HH^r(\partial\Omega)$, $\HH_0^r(\Omega)$, the usual Lebesgue and Sobolev spaces of scalar functions in $\Omega$ or in $\partial\Omega$, 
and we write in bold the spaces of vector-valued functions: ${\bf L}^2(\Omega)=(\LL^2(\Omega))^N$, ${\bf L}^2(\partial\Omega)=(\LL^2(\partial\Omega))^N$, etc.

We recall that $\Gz_{1}$, $\Gz_{2}$ are vector fields satisfying \eqref{Hypzi}. Moreover, we use the following particular constant:
\begin{equation}\label{DefCz2}
 \mathfrak{m}(\Gz_1,\Gz_2)\ov \max \{1 \, , \,\norm{\Gz_{1}}_{\GLL^{\infty}(\Omega)} \, , \, \norm{\nabla \Gz_{2}}_{\LL^r(\Omega)} \}.
\end{equation}
We also recall that $C>0$ denotes a generic constant only depending on the geometry. In particular, it is independent on $\Gz_1$, $\Gz_2$ and on the parameters $s$, $\lambda$ appearing in Carleman inequalities of sections \ref{sectionCarlemanOseen} and \ref{sectionStabEstimate}.

Finally, for $\mathcal{O}_1$, $\mathcal{O}_2$ two open subsets of $\mathbb{R}^N$, the notation $\mathcal{O}_1 \Subset \mathcal{O}_2$ means that there exists a compact set $\mathcal{K}$  such that $\mathcal{O}_1 \subset \mathcal{K} \subset \mathcal{O}_2$.

\paragraph{Organization of the paper.}
The paper is organized as follows. The Section~\ref{sectionCarlemanOseen} is dedicated to the proof of Carleman inequalities for the non-homogeneous Oseen equations (see Theorem \ref{ThmCarlOseen}). It is obtained by combining a domain extension argument with Carleman inequalities for compactly supported solutions of the Stokes equations. Then in Section~\ref{sectionStabEstimate}, we deduce a H\"older type interior estimates for a distributed observation as well as log type stability inequalities for both distributed and boundary observations. 
In particular, Theorem~\ref{StabilityThmOseen} is proved in subsections~\ref{InagStabdistriObs} and~\ref{StabEstBoundaryObs} and Theorem~\ref{ThmStabNS} is proved in subsection \ref{sectionStabStokesNS}. Finally, we present some applications in the last sections. The Section~\ref{sectionInvPbs} concerns the proof of stability inequalities for the inverse problem of recovering Navier and Robin coefficients (proof of Theorem \ref{ThmStabReconCoeff}) and Section~\ref{sectionErrorEsti} is dedicated to the proof of error estimates for some numerical reconstruction methods (proof of Theorem \ref{ThmRateCv}).


\section{Carleman Inequality for Stokes and Oseen equations}  
\label{sectionCarlemanOseen}

In this section, $\mathcal{O}$ is a non empty bounded open subset of $\mathbb{R}^N$ ($N=2$ or $N=3$) of class~$C^2$, $\omega$ is a non empty bounded open subset such that $\omega\Subset  \mathcal{O}$ and $\psi:{\mathcal{O}}\to \mathbb{R}$ is a function satisfying
\begin{equation}\label{eqpsi}
\begin{split}
\psi\in C^2({\mathcal{O}};\mathbb{R}),\quad &\psi> c_{0}\quad \mbox{ and }\quad |\nabla \psi|>0\quad \mbox{ in }\;\mathcal{O}\backslash \overline{\omega}\\
&\psi=c_0\quad \mbox{ on }\;  \partial \mathcal{O},
\end{split}
\end{equation}
for some positive constant $c_0>0$. For the existence of such a function see for instance \cite{FursikovImanuvilov2006} or \cite[Appendix III]{TucsnakWeiss}. Here, the set $\mathcal{O}$ plays the role of $\Omega$ or of an extension $\Tilde \Omega$ of $\Omega$ which is used in Section \ref{sectionStabEstimate} below.

The main aim of this section is to prove a Carleman inequality for the non homogeneous Oseen equations. For that, we first prove a Carleman inequality for a pair velocity-pressure in ${\bf H}_0^2({\mathcal{O}})\times \HH_0^1({\mathcal{O}})$ and then we use a domain extension argument to recover the non-homogeneous case.


\subsection{Carleman Inequality in the case of homogeneous boundary data}

Let us first recall a standard Carleman inequality for the Laplace equation:

\begin{theorem}\label{CarlEll}
Let $k\in \{0,1\}$, $F\in \LL^2(\mathcal{O})$  and $\GG\in \GLL^2(\mathcal{O})$. 
There exist $C>0$, $\widehat \lambda >1$ and $\widehat s>1$ such that for all $\lambda \geq \widehat \lambda $ and $s\geq \widehat s$, the solution $u\in \HH^1(\mathcal{O})$ of
\BEn
\{
\BA{rclcl}
-\Delta u&= & F+{\rm div\,}\GG & & \Rin \mathcal{O} , \\
u&= &  0 & & \Ron \partial \mathcal{O},
\end{array}
\right.
\EEn
satisfies the following inequality:
\begin{multline}\label{CarlEstEll}
 \int_{\mathcal{O}} \left(e^{(k-1)\lambda \psi}|\nabla u|^2+s^2\lambda ^2 e^{(k+1)\lambda \psi} |u|^2 \right) e^{2 s e^{\lambda \psi}}{\rm d}x  
\\ \leq 
  C\left(\int_{\mathcal{O}}  \left(s e^{k\lambda \psi} |\GG|^2 +s^{-1}\lambda ^{-2}  e^{(k-2)\lambda \psi} |F|^2\right) e^{2 s e^{\lambda \psi}}{\rm d}x \right.\\
  \left. +\int_{\omega} s^2\lambda ^2  e^{(k+1)\lambda \psi} |u|^2e^{2 s e^{\lambda \psi}}{\rm d}x\right).
\end{multline}
\end{theorem}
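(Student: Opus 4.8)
The plan is to prove \eqref{CarlEstEll} by the classical conjugation method (Hörmander, Fursikov--Imanuvilov), the two values of $k$ being absorbed into a single weighted energy estimate. Throughout I set $\varphi := e^{\lambda\psi}$, so $\nabla\varphi = \lambda\varphi\nabla\psi$ and $|\nabla\varphi| = \lambda\varphi|\nabla\psi|$, and the Carleman weight is $e^{s\varphi} = e^{se^{\lambda\psi}}$ in accordance with the factor $e^{2se^{\lambda\psi}}$ in \eqref{CarlEstEll}. By \eqref{eqpsi}, $\varphi \geq e^{\lambda c_0} \geq 1$ in $\mathcal{O}$ and $|\nabla\psi| \geq c > 0$ on $\mathcal{O}\setminus\overline{\omega}$, so $\nabla\varphi$ degenerates only on $\omega$. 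I conjugate by setting $w := e^{s\varphi}u \in \HH_0^1(\mathcal{O})$ (this is where $u = 0$ on $\partial\mathcal{O}$ enters) and compute the conjugated operator $Pw := e^{s\varphi}(-\Delta)(e^{-s\varphi}w)$, which splits as
\begin{equation*}
Pw = \underbrace{-\Delta w - s^2|\nabla\varphi|^2 w}_{=:\,P_+w} \;+\; \underbrace{2s\,\nabla\varphi\cdot\nabla w + s(\Delta\varphi)\,w}_{=:\,P_-w},
\end{equation*}
with $P_+$ formally self-adjoint and $P_-$ formally skew-adjoint (the zeroth-order term $s\Delta\varphi$ is attached to $P_-$ precisely to make it skew-adjoint). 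Since $-\Delta u = F + {\rm div}\,\GG$, one has $Pw = e^{s\varphi}(F + {\rm div}\,\GG)$.

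Next I would introduce the multiplier weight $\theta := s^{-1}\lambda^{-2}e^{(k-2)\lambda\psi} = s^{-1}\lambda^{-2}\varphi^{k-2}$ and expand the weighted energy
\begin{equation*}
\int_{\mathcal{O}}\theta\,|Pw|^2\,\dd x = \int_{\mathcal{O}}\theta\,|P_+w|^2\,\dd x + \int_{\mathcal{O}}\theta\,|P_-w|^2\,\dd x + 2\int_{\mathcal{O}}\theta\,(P_+w)(P_-w)\,\dd x.
\end{equation*}
The heart of the proof is the computation of the cross term $2\int_{\mathcal{O}}\theta\,(P_+w)(P_-w)\,\dd x$ by integration by parts. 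Using $\nabla\varphi = \lambda\varphi\nabla\psi$, its leading contribution is positive and, carrying the weight $\theta$, equals exactly the left-hand side of \eqref{CarlEstEll} written in terms of $w$, namely $c\int_{\mathcal{O}}(e^{(k-1)\lambda\psi}|\nabla w|^2 + s^2\lambda^2 e^{(k+1)\lambda\psi}|w|^2)$, where I use the lower bound $|\nabla\psi| \geq c$ on $\mathcal{O}\setminus\overline{\omega}$. All remaining terms — those coming from the derivatives of $\theta$ and $\psi$ and from $\int_{\mathcal{O}}\theta|P_\pm w|^2$ — carry strictly fewer powers of $s$ or $\lambda$, hence are absorbed by choosing $\widehat\lambda$ and then $\widehat s$ large enough. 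The degeneracy of $|\nabla\psi|$ on $\omega$ is harmless: the contribution localised there is moved to the right-hand side and, after undoing the conjugation, produces the observation integral $\int_\omega s^2\lambda^2 e^{(k+1)\lambda\psi}|u|^2 e^{2se^{\lambda\psi}}$.

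The main obstacle is controlling the sign of the boundary terms generated by the integrations by parts. Because $u = 0$ on $\partial\mathcal{O}$, the conjugated function $w$ vanishes there and only its normal derivative $\partial_{\Gn}w$ survives; moreover $\psi \equiv c_0$ on $\partial\mathcal{O}$ forces $\nabla\psi = (\partial_{\Gn}\psi)\Gn$ to be purely normal there. The boundary integrals thus reduce to a multiple of $\int_{\partial\mathcal{O}}\theta\, s\lambda\varphi\,(\partial_{\Gn}\psi)\,|\partial_{\Gn}w|^2$, and since $\psi$ attains its minimum value $c_0$ on $\partial\mathcal{O}$ we have $\partial_{\Gn}\psi \leq 0$ for the outward normal $\Gn$; this boundary term therefore has the sign required to be dropped from the lower bound of the cross term. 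This is exactly where the structural hypotheses \eqref{eqpsi} are used.

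It remains to treat the right-hand side $\int_{\mathcal{O}}\theta\,|Pw|^2 = \int_{\mathcal{O}}\theta\, e^{2s\varphi}|F + {\rm div}\,\GG|^2$. The contribution of $F$ is kept as is and gives precisely the weight $\theta = s^{-1}\lambda^{-2}e^{(k-2)\lambda\psi}$ on $|F|^2$. The divergence part requires care: rather than bounding $|{\rm div}\,\GG|^2$, one integrates ${\rm div}\,\GG$ by parts before applying Young's inequality (equivalently, one pairs the source linearly instead of completing the square), so that each derivative falling on $\GG$ is traded for a factor $s\nabla\varphi \sim s\lambda\varphi$; this multiplies $\theta$ by $(s\lambda\varphi)^2$ and yields the weight $s\,e^{k\lambda\psi}$ on $|\GG|^2$, the remainder being absorbed. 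Collecting all terms and returning from $w$ to $u$ via $|\nabla u|^2 e^{2s\varphi} \leq 2|\nabla w|^2 + 2s^2|\nabla\varphi|^2|u|^2 e^{2s\varphi}$ converts the estimates for $w$ into the stated ones for $u$ and yields \eqref{CarlEstEll}.
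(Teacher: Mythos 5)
Your route is genuinely different from the paper's: the authors do not prove this estimate from scratch. They quote the $k=1$ case from \cite[Theorem~A.1]{ImmPuel2003}, deduce the $k=0$ case by applying the $k=1$ estimate to the auxiliary function $e^{-\frac{\lambda}{2}\psi}u$ (which turns each weight $e^{j\lambda\psi}$ into $e^{(j-1)\lambda\psi}$ up to absorbable commutator terms), and account for the fact that the quoted result assumes $\psi=0$ on $\partial\mathcal{O}$ rather than $\psi=c_0$ by the rescaling $(s,\lambda)=(\tilde s e^{\tilde\lambda c_0},\tilde\lambda)$. Your plan of a direct conjugation proof, with the multiplier weight $\theta=s^{-1}\lambda^{-2}e^{(k-2)\lambda\psi}$ unifying the two values of $k$, is the standard Fursikov--Imanuvilov machinery, and the structural points you identify (self-adjoint/skew-adjoint splitting, sign of the boundary term via $\partial_{\Gn}\psi\le 0$ because $\psi$ attains its minimum $c_0$ on $\partial\mathcal{O}$, confinement of the degeneracy of $|\nabla\psi|$ to $\omega$) are all correct in outline.

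There is, however, a genuine gap exactly at the point that makes the cited theorem a nontrivial result worth quoting: the solution is only in $\HH^1(\mathcal{O})$ and the source is only in $\HH^{-1}(\mathcal{O})$, since $\GG\in\GLL^2(\mathcal{O})$ and no extra regularity is assumed. Consequently $\Delta u$, hence $\Delta w$ and $P_+w$, are distributions rather than $\LL^2$ functions, and the energy identity $\int_{\mathcal{O}}\theta|Pw|^2=\int_{\mathcal{O}}\theta|P_+w|^2+\int_{\mathcal{O}}\theta|P_-w|^2+2\int_{\mathcal{O}}\theta(P_+w)(P_-w)$ on which your whole argument rests is not even well defined. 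Your proposed remedy --- pairing the source linearly and integrating $\mathrm{div}\,\GG$ by parts --- does not close this: the natural multiplier $P_-w=2s\nabla\varphi\cdot\nabla w+s(\Delta\varphi)w$ is first order in $w$, so moving the derivative off $\GG$ places a second derivative on $w$, which again fails to be in $\LL^2$; and the term $\int_{\mathcal{O}}\theta(P_+w)(P_-w)$ itself already involves $\Delta w$. Handling an $\HH^{-1}$ right-hand side with an $\HH^1$ solution requires either a regularisation/duality argument or the specific first-order reformulation developed by Imanuvilov and Puel, and this is precisely the content the paper outsources to the citation. If you want a self-contained proof you must supply that step; otherwise the efficient course is the paper's: invoke the $k=1$ case and perform the elementary substitution $u\mapsto e^{-\frac{\lambda}{2}\psi}u$ for $k=0$.
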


\begin{proof}
Inequality \eqref{CarlEstEll} for $k=1$ is given for instance in~\cite[Theorem~A.1]{ImmPuel2003} and \eqref{CarlEstEll} for~$k=0$ is obtained by applying 
\eqref{CarlEstEll} with $k=1$ to the equation satisfied by~$e^{-\frac{\lambda}{2} \psi} u$. Note that the above quoted result is stated for a function $\psi$ that vanishes on~$\partial\mathcal{O}$. However, if~$\Tilde s, \Tilde \lambda$ denote the admissible parameters of \cite[Theorem~A.1]{ImmPuel2003}, it suffices to choose $(s,\lambda)=(\Tilde s e^{\Tilde \lambda c_0},\Tilde \lambda)$ to get \eqref{CarlEstEll}.
\end{proof}

We deduce the following Carleman inequality for Stokes equations:

\begin{theorem}\label{CarlLocalStokes}
There exist $C>0$, $\widehat \lambda >1$ and $\widehat s>1$ such that for all $\lambda \geq \widehat \lambda $ and $s\geq \widehat s$, and
for all $(\Gv,p)\in {\bf H}_0^2({\mathcal{O}})\times \HH_0^1({\mathcal{O}})$ the following inequalities hold:
\begin{multline}
 \int_{\mathcal{O}} \left(
|\nabla \Gv|^2 +s  e^{\lambda \psi} |{\rm curl}\, \Gv|^2+s^2\lambda ^2e^{2\lambda \psi} |\Gv|^2\right) e^{2 s e^{\lambda \psi}}{\rm d}x \\
 \leq C \left(\int_{\mathcal{O}} (s^{-1}\lambda^{-2}e^{-\lambda \psi} |\nabla {\rm div}\, \Gv |^2+\lambda ^{-2}|\nabla p -\Delta \Gv|^2) e^{2 s e^{\lambda \psi}}{\rm d}x
 + \int_{\omega}  s^3\lambda ^2e^{3\lambda \psi}|\Gv|^2e^{2 s e^{\lambda \psi}}{\rm d}x \right)
\label{CarlEstStokes1bis}
\end{multline}
and
\BM\label{CarlEstStokesPress}
\int_{\mathcal{O}}  s  e^{\lambda \psi} |{\rm div}\, \Gv-p|^2 e^{2 s e^{\lambda \psi}}{\rm d}x   
\\
 \leq C \left( \int_{\mathcal{O}}  \lambda^{-2}|\nabla p -\Delta \Gv|^2 e^{2 s e^{\lambda \psi}}{\rm d}x + \int_{\omega}  s e^{\lambda \psi} |{\rm div}\, \Gv-p |^2 e^{2 s e^{\lambda \psi}}{\rm d}x\right) .
\end{multline}
\end{theorem}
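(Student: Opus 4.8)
The plan is to reduce the Stokes system to scalar Poisson problems to which the Laplace Carleman estimate of Theorem~\ref{CarlEll} applies, and to read off the small coefficients $\lambda^{-2}$ and $s^{-1}\lambda^{-2}e^{-\lambda\psi}$ by routing each forcing term through the correct slot of \eqref{CarlEstEll}. Write $\GF := \nabla p - \Delta\Gv$ and $d := {\rm div}\,\Gv$. Since $(\Gv,p)\in{\bf H}_0^2(\mathcal{O})\times\HH_0^1(\mathcal{O})$, the traces of $\Gv$, $\nabla\Gv$ and $p$ all vanish on $\partial\mathcal{O}$, so ${\rm curl}\,\Gv$, $d$ and $q:={\rm div}\,\Gv-p$ vanish there. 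Taking the curl and the divergence of $\nabla p - \Delta\Gv = \GF$ and using ${\rm curl}\,\nabla p = 0$ gives the two homogeneous Dirichlet problems $-\Delta({\rm curl}\,\Gv) = {\rm curl}\,\GF$ and $-\Delta q = {\rm div}\,\GF$ in $\mathcal{O}$; I shall also use the identity $-\Delta\Gv = {\rm curl}\,{\rm curl}\,\Gv - \nabla d$.

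For the pressure inequality \eqref{CarlEstStokesPress}, I would apply Theorem~\ref{CarlEll} to $q$ with $k=0$, $F=0$ and $\GG=\GF$ (so ${\rm div}\,\GG = {\rm div}\,\GF$). Keeping only the zeroth-order term on the left, this reads
$$\int_{\mathcal{O}} s^2\lambda^2 e^{\lambda\psi}|q|^2 e^{2se^{\lambda\psi}}\,{\rm d}x \le C\int_{\mathcal{O}} s|\GF|^2 e^{2se^{\lambda\psi}}\,{\rm d}x + C\int_{\omega} s^2\lambda^2 e^{\lambda\psi}|q|^2 e^{2se^{\lambda\psi}}\,{\rm d}x.$$
Dividing by $s\lambda^2$ converts the coefficient $s$ of $|\GF|^2$ into $\lambda^{-2}$ and the observation weight $s^2\lambda^2 e^{\lambda\psi}$ into $s e^{\lambda\psi}$, while the left-hand side becomes $\int_{\mathcal{O}} s e^{\lambda\psi}|q|^2e^{2se^{\lambda\psi}}\,{\rm d}x$: this is exactly \eqref{CarlEstStokesPress}.

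For the velocity inequality \eqref{CarlEstStokes1bis}, I would apply Theorem~\ref{CarlEll} with $k=1$ to each component, writing $-\Delta v_i = [{\rm curl}\,{\rm curl}\,\Gv]_i - \partial_{x_i}d$ and routing $[{\rm curl}\,{\rm curl}\,\Gv]_i$ (a first derivative of ${\rm curl}\,\Gv$) into the divergence slot $\GG$, with $|\GG|\lesssim|{\rm curl}\,\Gv|$, and $-\partial_{x_i}d$ into the pure-function slot $F$, whose coefficient is exactly $s^{-1}\lambda^{-2}e^{-\lambda\psi}$. Summing over $i$ bounds $\int_{\mathcal{O}}(|\nabla\Gv|^2+s^2\lambda^2 e^{2\lambda\psi}|\Gv|^2)e^{2se^{\lambda\psi}}\,{\rm d}x$ by $C\int_{\mathcal{O}}(s e^{\lambda\psi}|{\rm curl}\,\Gv|^2 + s^{-1}\lambda^{-2}e^{-\lambda\psi}|\nabla d|^2)e^{2se^{\lambda\psi}}\,{\rm d}x$ plus a velocity observation on $\omega$. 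Applying Theorem~\ref{CarlEll} to ${\rm curl}\,\Gv$ as for $q$ above (forcing ${\rm curl}\,\GF$ in divergence form, $k=0$, then dividing by $s\lambda^2$) yields $\int_{\mathcal{O}} s e^{\lambda\psi}|{\rm curl}\,\Gv|^2 e^{2se^{\lambda\psi}}\,{\rm d}x \le C\lambda^{-2}\int_{\mathcal{O}}|\GF|^2 e^{2se^{\lambda\psi}}\,{\rm d}x + C\int_{\omega} s e^{\lambda\psi}|{\rm curl}\,\Gv|^2 e^{2se^{\lambda\psi}}\,{\rm d}x$; substituting and adding $\int_{\mathcal{O}} s e^{\lambda\psi}|{\rm curl}\,\Gv|^2 e^{2se^{\lambda\psi}}\,{\rm d}x$ to the left reproduces all three left-hand terms of \eqref{CarlEstStokes1bis} together with the right-hand coefficients $\lambda^{-2}|\GF|^2$ and $s^{-1}\lambda^{-2}e^{-\lambda\psi}|\nabla d|^2$.

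The step I expect to be the main obstacle is the reduction of the observation terms to the single one in \eqref{CarlEstStokes1bis}. The velocity observation $\int_{\omega}s^2\lambda^2 e^{2\lambda\psi}|\Gv|^2e^{2se^{\lambda\psi}}\,{\rm d}x$ is harmless, being dominated by the target weight $s^3\lambda^2 e^{3\lambda\psi}$ once $se^{\lambda\psi}\ge 1$; but the first-order quantity $\int_{\omega}s e^{\lambda\psi}|{\rm curl}\,\Gv|^2e^{2se^{\lambda\psi}}\,{\rm d}x$ must be turned into a zeroth-order observation of $\Gv$. I would do this on nested sets $\tilde\omega\Subset\omega$, arranging the weight $\psi$ so that its critical set lies in $\tilde\omega$ and observing on $\tilde\omega$ throughout: a cut-off/Caccioppoli argument against the weighted equation, in which \emph{interior regularity for the Stokes system} is used to keep the pressure out, bounds $\int_{\tilde\omega}s e^{\lambda\psi}|\nabla\Gv|^2e^{2se^{\lambda\psi}}\,{\rm d}x$ by a small multiple of the left-hand side of \eqref{CarlEstStokes1bis} plus $C\int_{\omega}s^3\lambda^2 e^{3\lambda\psi}|\Gv|^2e^{2se^{\lambda\psi}}\,{\rm d}x$ and the forcing terms in $\GF$ and $\nabla d$ already present on the right. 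The delicate points are the comparability of the Carleman weight across the cut-off annulus — which is what fixes the admissible inflation of the polynomial powers from $se^{\lambda\psi}$ up to $s^3\lambda^2 e^{3\lambda\psi}$ — and the elimination of the pressure from the final observation; the self-improving terms are then absorbed into the left-hand side for $s,\lambda$ large.
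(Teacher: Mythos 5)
Your proposal follows essentially the same route as the paper: the same three auxiliary identities (curl equation, equation for ${\rm div}\,\Gv-p$, and $-\Delta \Gv={\rm curl}\,{\rm curl}\,\Gv-\nabla\,{\rm div}\,\Gv$), the same applications of Theorem~\ref{CarlEll} with $k=0$ to the scalar/curl quantities and $k=1$ to $\Gv$, and the same division by $s\lambda^2$ to produce the coefficients $\lambda^{-2}$ and $s^{-1}\lambda^{-2}e^{-\lambda\psi}$. The only point where you diverge is the step you flag as the main obstacle, and there the paper's argument is simpler than what you anticipate: with $\rho\in C_c^\infty(\omega)$, $\rho=1$ on $\omega_0$, one writes $s\int_\omega \rho\, e^{\lambda\psi}e^{2se^{\lambda\psi}}|{\rm curl}\,\Gv|^2\,{\rm d}x=s\int_\omega {\rm curl}\bigl(\rho\, e^{\lambda\psi}e^{2se^{\lambda\psi}}{\rm curl}\,\Gv\bigr)\cdot\Gv\,{\rm d}x$ and applies Cauchy--Schwarz, absorbing the resulting $\nabla({\rm curl}\,\Gv)$ and ${\rm curl}\,\Gv$ terms into the left-hand side of the $k=0$ Carleman estimate (which already controls $s^{-1}\lambda^{-2}e^{-\lambda\psi}|\nabla({\rm curl}\,\Gv)|^2$ globally), leaving exactly the $s^3\lambda^2e^{3\lambda\psi}|\Gv|^2$ observation; no weighted Caccioppoli inequality for the Stokes system and no interior regularity are needed, since the pressure never enters the curl equation and only the curl operator is integrated by parts.
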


\begin{proof}
First, we set $\Gf\ov -\Delta \Gv+\nabla p $. Easy calculations yield:
\begin{eqnarray}
-\Delta ({\rm curl}\, \Gv)&=&{\rm curl}\,\Gf \quad \; \, \Rin \mathcal{O},\label{eqPrCarl2}\\
-\Delta ({\rm div}\, \Gv-p )&=&{\rm div}\,\Gf \quad \Rin \mathcal{O},\label{eqPrCarl3}\\
-\Delta \Gv&=&{\rm curl}\,({\rm curl}\, \Gv)-\nabla ({\rm div}\, \Gv)  \quad \Rin \mathcal{O}.\label{eqPrCarl1}
\end{eqnarray}

Then, by applying \eqref{CarlEstEll} for $k=0$ to \eqref{eqPrCarl3} we obtain \eqref{CarlEstStokesPress}. 

Next, we introduce another open subset $\omega_{0} \Subset \omega$ and apply \eqref{CarlEstEll} for $k=0$ to~\eqref{eqPrCarl2} to obtain:
\BM\label{CarlEstStokesRot}
\int_{\mathcal{O}}  s^{-1}\lambda^{-2}  e^{-\lambda \psi} | \nabla \( {\rm curl}\, \Gv \)|^2 e^{2 s e^{\lambda \psi}}{\rm d}x   
 +\int_{\mathcal{O}}  s  e^{\lambda \psi} |{\rm curl}\, \Gv|^2 e^{2 s e^{\lambda \psi}}{\rm d}x   
\\
 \leq C \left(\int_{\omega_0} s e^{\lambda \psi} |{\rm curl}\,\Gv|^2 e^{2 s e^{\lambda \psi}}{\rm d}x
 + \int_{\mathcal{O}}  \lambda^{-2}|\nabla p -\Delta \Gv|^2 e^{2 s e^{\lambda \psi}}{\rm d}x\right).
\end{multline}
Let us replace the local term in $\curl \Gv$ by a local term in~$\Gv$. For that, we introduce a function $\rho\in C^{\infty}_{c} (\omega)$ such that $0\leq\rho\leq 1$ and $\rho=1$ in $\omega_{0}$. Using an integration by parts in~$\omega$, we get
\begin{multline*}
\! \int_{\omega_0} s e^{\lambda \psi} |{\rm curl}\,\Gv|^2 e^{2 s e^{\lambda \psi}}{\rm d}x
 \leq  \, s \int_{\omega} \rho e^{\lambda \psi} |{\rm curl}\,\Gv|^2 e^{2 s e^{\lambda \psi}}{\rm d}x =   \, s \int_{\omega} \curl \! \( \rho e^{\lambda \psi} e^{2 s e^{\lambda \psi}} {\rm curl}\,\Gv \) \Gv \, {\rm d}x \\
 \leq  C \, \bigg( \int_{\omega} s^2\lambda e^{2\lambda \psi} e^{2 s e^{\lambda \psi}} \abs{\Gv}\abs{\curl \Gv}{\rm d}x +  \int_{\omega} se^{\lambda \psi} e^{2 s e^{\lambda \psi}} \abs{\nabla \( \curl \Gv \)} \abs{\Gv}  {\rm d}x  \bigg) ,
\end{multline*}
and with Cauchy-Schwarz inequality:
\begin{multline*}
\int_{\omega_0} s e^{\lambda \psi} |{\rm curl}\,\Gv|^2 e^{2 s e^{\lambda \psi}}{\rm d}x 
\leq  \epsilon \int_{\mathcal{O}}  \left(s^{-1}\lambda^{-2} e^{-\lambda \psi} e^{2 s e^{\lambda \psi}} \abs{\nabla \( \curl \Gv \)}^2 +se^{\lambda \psi}e^{2 s e^{\lambda \psi}} \abs{\curl \Gv}^2\right) {\rm d}x \\
+ \frac{C}{\epsilon}s^3\lambda^2 \int_{\omega} e^{3\lambda \psi} e^{2 s e^{\lambda \psi}} \abs{\Gv}^2 {\rm d}x .
\end{multline*}
By combining~\eqref{CarlEstStokesRot} with the above inequality for $\epsilon>0$ small enough, we obtain 
\BM\label{CarlEstStokesRot2}
\int_{\mathcal{O}}  s^{-1}\lambda^{-2}  e^{-\lambda \psi} | \nabla \( {\rm curl}\, \Gv \)|^2 e^{2 s e^{\lambda \psi}}{\rm d}x   
 +\int_{\mathcal{O}}  s  e^{\lambda \psi} |{\rm curl}\, \Gv|^2 e^{2 s e^{\lambda \psi}}{\rm d}x   
\\
 \leq C \left(\int_{\omega} s^3 \lambda^2 e^{3 \lambda \psi} |\Gv|^2 e^{2 s e^{\lambda \psi}}{\rm d}x+ \int_{\mathcal{O}}  \lambda^{-2}|\nabla p -\Delta \Gv|^2 e^{2 s e^{\lambda \psi}}{\rm d}x\right).
\end{multline}

Finally, \eqref{CarlEstStokes1bis} is obtained by first applying \eqref{CarlEstEll} for $k=1$ to \eqref{eqPrCarl1} and next using the estimate of 
${\rm curl}\, \Gv$ given by \eqref{CarlEstStokesRot2}.
\end{proof}


\subsection{Carleman Inequality in the case of non-homogeneous boundary data}

In this section, we prove a Carleman inequality for the Oseen equations:
\begin{equation}\label{OseenOcal}
\left\lbrace 
\begin{array}{rcll}
- \nu \Delta \Gv + \( \Gz_{1} \cdot \nabla \) \Gv + \( \Gv \cdot \nabla \) \Gz_{2} + \nabla p    & = & \Gf   & \mbox{\rm in }  \MO   , \\
 {\rm div}\,  \Gv & = & d   & \mbox{\rm in } \MO .
\end{array}
\right.
\end{equation}
Above and in the following, $\Gz_{1}\in \GLL^{\infty}(\mathcal{O})$, $\Gz_{2}\in \GWW^{1,r} (\mathcal{O})$ (with $r>2$ if $N=2$ and $r=3$ if $N=3$) and we use the following notation for the particular constant:
\begin{equation}\label{DefCz}
\Tilde {\mathfrak{m}}(\Gz_1,\Gz_2)\ov \max \{1 \, , \,\norm{\Gz_{1}}_{\GLL^{\infty}(\mathcal{O})} \, , \, \norm{\nabla \Gz_{2}}_{\LL^r(\mathcal{O})} \}.
\end{equation}
We recall that $C>0$ denote a generic constant only depending on the geometry and independent on $s$, $\lambda$, $\Gz_1$, $\Gz_2$. 

\begin{theorem} \label{ThmCarlOseen}
There exist $C>0$, $\widehat c>0$ and $\widehat s>1$ such that for all $\Gz_{1}\in \GLL^{\infty}(\mathcal{O})$, $\Gz_{2}\in \GWW^{1,r} (\mathcal{O})$, all $\lambda \geq \widehat \lambda \ov \Tilde {\mathfrak{m}}(\Gz_1,\Gz_2) \widehat c \, $ and all $s\geq \widehat s$ every solution $(\Gv,p)\in {\bf H}^2(\mathcal{O})\times \HH^1(\mathcal{O})$ of \eqref{OseenOcal} satisfies:
\begin{multline}\label{CarlEst3OseenBIS}
 \int_{\MO} \left(|\nabla \Gv|^2 
 +s e^{\lambda \psi} |{\rm curl}\,\Gv|^2 +s^2\lambda ^2e^{2\lambda \psi} |\Gv|^2\right) e^{2 s e^{\lambda \psi}}{\rm d}x\\
 \leq C \left(\int_{\MO} (s^{-1}\lambda^{-2}e^{-\lambda \psi} |\nabla d|^2+\lambda ^{-2}|\Gf|^2) e^{2 s e^{\lambda \psi}}{\rm d}x +\int_{\omega}  s^3\lambda ^2 e^{3\lambda \psi} |\Gv|^2 e^{2 s e^{\lambda \psi}}{\rm d}x \right. \\
\qquad \left. +  e^{2 s e^{\lambda c_0}}  \(\|\Gv\|_{{\bf H}^2(\MO)}^2+\|p\|_{\HH^1(\MO)}^2\)\right)  
\end{multline}
and 
\begin{multline}\label{CarlEst3OseenPressionBIS}
\int_{\mathcal{O}}  s  e^{\lambda \psi} |p-d|^2 e^{2 s e^{\lambda \psi}}{\rm d}x   
\leq C \left( \int_{\omega} ( s^3\lambda ^2 e^{3\lambda \psi} |\Gv|^2+s  e^{\lambda \psi} |p-d|^2  ) e^{2 s e^{\lambda \psi}}{\rm d}x \right. \\
\qquad \left. + \int_{\MO} (s^{-1}\lambda^{-2}e^{-\lambda \psi} |\nabla d|^2+\lambda ^{-2}|\Gf|^2) e^{2 s e^{\lambda \psi}}{\rm d}x
 + e^{2 s e^{\lambda c_0}} \(\|\Gv\|_{{\bf H}^2(\MO)}^2+\|p\|_{\HH^1(\MO)}^2\)\right) .  
\end{multline}
\end{theorem}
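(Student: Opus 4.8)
The plan is to deduce \eqref{CarlEst3OseenBIS}--\eqref{CarlEst3OseenPressionBIS} from the homogeneous Stokes estimates of Theorem~\ref{CarlLocalStokes} by a \emph{domain extension} argument that reduces the non-homogeneous pair $(\Gv,p)$ to a compactly supported one, while producing the boundary contribution with the correct weight $e^{2se^{\lambda c_0}}$. First I would fix a bounded $C^2$ domain $\MO^{*}$ with $\MO\Subset\MO^{*}$ and a bounded extension operator, so that $\Gv,p$ extend to $(\GV,P)\in\GHH^2(\MO^{*})\times\HH^1(\MO^{*})$ with $(\GV,P)=(\Gv,p)$ on $\MO$ and $\norm{\GV}_{\GHH^2(\MO^{*})}+\norm{P}_{\HH^1(\MO^{*})}\le C(\norm{\Gv}_{\GHH^2(\MO)}+\norm{p}_{\HH^1(\MO)})$. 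The crucial point is the choice of weight: I would extend $\psi$ to $\psi^{*}\in C^2(\MO^{*})$ satisfying \eqref{eqpsi} on $\MO^{*}$ (with the \emph{same} $\omega$), with $\psi^{*}=\psi$ on $\MO$ and arranged so that $\psi^{*}\le c_0$ on $\MO^{*}\setminus\MO$. This is possible because $\psi$ attains on $\partial\MO$ its minimum $c_0$ over $\overline{\MO}$ with $|\nabla\psi|>0$ (so $\partial_{\Gn}\psi<0$), hence $\psi$ can be continued past $\partial\MO$ while keeping decreasing down to a value $c_0^{*}<c_0$ on $\partial\MO^{*}$. Finally I would pick $\theta\in C_c^{\infty}(\MO^{*})$ with $0\le\theta\le1$ and $\theta\equiv1$ on a neighborhood of $\overline{\MO}$, so that $(\theta\GV,\theta P)$ lies in $\GHH_0^2(\MO^{*})\times\HH_0^1(\MO^{*})$ and coincides with $(\Gv,p)$ on $\MO$.

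Next I would apply Theorem~\ref{CarlLocalStokes} on $(\MO^{*},\psi^{*},\omega)$ to $(\theta\GV,\theta P/\nu)$ (the rescaling of the pressure by $1/\nu$ absorbs the viscosity into the generic constant). Since the left-hand side of \eqref{CarlEstStokes1bis} is nonnegative, restricting it to $\MO$, where $\theta\equiv1$, $\GV=\Gv$ and $\psi^{*}=\psi$, gives back exactly the left-hand side of \eqref{CarlEst3OseenBIS}. On the right-hand side I would split the integrals over $\MO^{*}=\MO\cup(\MO^{*}\setminus\MO)$. On $\MO$ the pair solves \eqref{OseenOcal}, so $\nabla(\theta P/\nu)-\Delta(\theta\GV)=\nu^{-1}\bigl(\Gf-(\Gz_1\cdot\nabla)\Gv-(\Gv\cdot\nabla)\Gz_2\bigr)$ and $\nabla\,\dive(\theta\GV)=\nabla d$; these produce the $\lambda^{-2}|\Gf|^2$ and $s^{-1}\lambda^{-2}e^{-\lambda\psi}|\nabla d|^2$ terms, the interior term over $\omega$ (since $\omega\Subset\MO$), and the two drift terms to be absorbed. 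On $\MO^{*}\setminus\MO$ the extended pair solves no equation, but there $\psi^{*}\le c_0$, whence $e^{2se^{\lambda\psi^{*}}}\le e^{2se^{\lambda c_0}}$; bounding the commutators coming from $\theta$ together with the uncontrolled source by the extension norm yields a contribution $\le C\,e^{2se^{\lambda c_0}}(\norm{\Gv}_{\GHH^2(\MO)}^2+\norm{p}_{\HH^1(\MO)}^2)$, which is precisely the last term of \eqref{CarlEst3OseenBIS}.

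The main obstacle is the absorption of the two drift terms into the left-hand side, for which the threshold $\widehat\lambda=\Tilde{\mathfrak{m}}(\Gz_1,\Gz_2)\,\widehat c$ is designed. The first term is easy: since $\Gz_1\in\GLL^{\infty}$, one has $\lambda^{-2}\int_{\MO}\abs{(\Gz_1\cdot\nabla)\Gv}^2e^{2se^{\lambda\psi}}\le C\lambda^{-2}\norm{\Gz_1}_{\GLL^{\infty}(\MO)}^2\int_{\MO}\abs{\nabla\Gv}^2e^{2se^{\lambda\psi}}$, absorbed by the $\abs{\nabla\Gv}^2$ term of the left-hand side as soon as $\lambda\ge C\norm{\Gz_1}_{\GLL^{\infty}(\MO)}$. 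The delicate term is $(\Gv\cdot\nabla)\Gz_2$, because $\nabla\Gz_2$ only lies in $\LL^r$. Here I would introduce the weighted field $\Gw\ov e^{se^{\lambda\psi}}\Gv$ and use H\"older's inequality with exponents $r/2$ and $r/(r-2)$ followed by the Sobolev embedding $\HH^1(\MO)\hookrightarrow\LL^{2r/(r-2)}(\MO)$ (valid since $2r/(r-2)=6$ when $N=3$, $r=3$, and is a finite exponent when $N=2$, $r>2$), obtaining
\begin{equation*}
\lambda^{-2}\int_{\MO}\abs{(\Gv\cdot\nabla)\Gz_2}^2 e^{2se^{\lambda\psi}}\,\dd x \le C\lambda^{-2}\norm{\nabla\Gz_2}_{\LL^r(\MO)}^2\,\norm{\Gw}_{\GHH^1(\MO)}^2 .
\end{equation*}
Since $\nabla\Gw=e^{se^{\lambda\psi}}\bigl(\nabla\Gv+s\lambda e^{\lambda\psi}\,\Gv\otimes\nabla\psi\bigr)$, the weighted norm $\norm{\Gw}_{\GHH^1(\MO)}^2$ is controlled by $\int_{\MO}\bigl(\abs{\nabla\Gv}^2+s^2\lambda^2e^{2\lambda\psi}\abs{\Gv}^2\bigr)e^{2se^{\lambda\psi}}$, i.e. by the left-hand side of \eqref{CarlEst3OseenBIS}. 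Hence this term carries the factor $C\norm{\nabla\Gz_2}_{\LL^r(\MO)}^2/\lambda^2$ and is absorbed once $\lambda\ge C\norm{\nabla\Gz_2}_{\LL^r(\MO)}$; combined with the previous condition, the geometric threshold from Theorem~\ref{CarlLocalStokes}, and the definition \eqref{DefCz} (recall $\Tilde{\mathfrak{m}}\ge1$), this is exactly $\lambda\ge\widehat\lambda=\Tilde{\mathfrak{m}}(\Gz_1,\Gz_2)\,\widehat c$. Taking the absorption constants to be a small fraction of the left-hand side and moving them across completes \eqref{CarlEst3OseenBIS}.

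Finally, \eqref{CarlEst3OseenPressionBIS} follows along the same lines from the pressure estimate \eqref{CarlEstStokesPress} of Theorem~\ref{CarlLocalStokes}: applied to $(\theta\GV,\theta P/\nu)$ it produces on the left the quantity $\int_{\MO}se^{\lambda\psi}\abs{\dive\Gv-p}^2e^{2se^{\lambda\psi}}$ with $\dive\Gv=d$, and on the right the same $\Gf$, $\nabla d$, interior $\omega$ and $e^{2se^{\lambda c_0}}$ boundary terms. The essential difference is that the drift contributions entering through $\nabla(\theta P/\nu)-\Delta(\theta\GV)$ cannot be absorbed by the left-hand side of \eqref{CarlEstStokesPress} (which involves only $\dive\Gv-p$); instead I would bound them directly by the already established velocity estimate \eqref{CarlEst3OseenBIS}, exactly as above, which closes the argument.
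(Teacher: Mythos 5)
Your proposal is correct and follows essentially the same route as the paper: extend $(\Gv,p)$ to a compactly supported pair on a larger domain (the paper uses a Stein-type extension landing directly in $\GHH_0^2\times\HH_0^1$ rather than extension followed by a cutoff, which is equivalent), extend $\psi$ so that $\psi\le c_0$ outside $\MO$, apply Theorem~\ref{CarlLocalStokes}, absorb the $(\Gz_1\cdot\nabla)\Gv$ term by the $\GLL^\infty$ bound and the $(\Gv\cdot\nabla)\Gz_2$ term by H\"older plus the Sobolev embedding $\GHH^1\hookrightarrow\GLL^{2r/(r-2)}$ applied to the weighted field, and for the pressure estimate control the drift terms by the already established velocity inequality. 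The remaining differences (rescaling the pressure by $1/\nu$, bounding the region outside $\MO$ crudely instead of extending the coefficients there) are cosmetic.
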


\begin{proof}
Let $\Tilde\MO$ be a bounded domain of $\mathbb{R}^N$ of class $C^2$ such that $\MO\Subset  \Tilde\MO$. We extend $\psi$ to~$\Tilde \MO$ (while keeping the same name) in a such a way that:
\begin{equation}
\begin{split}
\psi\in C^2(\Tilde\MO;\mathbb{R}),\quad \psi>0\quad \mbox{ and }\quad |\nabla \psi|>0\quad \mbox{ in }\;\Tilde\MO\backslash \overline{\omega},\label{eqpsi2Oseen}\\
\psi\equiv c_0\quad\mbox{ on } \partial\MO,\quad 0<\psi<c_0\quad\mbox{ in } \Tilde\MO\backslash\overline{\MO}, \quad c_0<\psi\quad\mbox{ in } \MO.
\end{split}
\end{equation}
Let $E \, : \, \GHH^{2}(\MO) \times \HH^{1}(\MO) \rightarrow \GHH^{2}_{0}(\tilde \MO) \times \HH^{1}_{0}(\tilde \MO)$ be a linear continuous map (given for example by Stein's theorem, see~\cite{Adams}), also continuous from $\GHH^{1}(\MO) \times \LL^{2}(\MO)$ into $\GHH^{1}_{0}(\tilde \MO) \times \LL^{2}(\tilde \MO)$, such that $E(\Gv,p)\equiv (\Gv,p)$ in $\MO$, and define $(\Tilde \Gv,\Tilde p) \ov E(\Gv,p)$.  We also denote by $\tilde \Gz_{1}$, $\tilde \Gz_{2}$ some continuous extensions of $\Gz_{1}$, $\Gz_{2}$ for the $\GLL^{\infty}$ and $\GWW^{1,r}$ norms in $\tilde \MO$ respectively. The pair $(\Tilde \Gv,\Tilde p) \ov E(\Gv,p)$ is then solution to
\begin{equation}  \label{EqVitesseFluideTildeOseen}
\left\lbrace 
\begin{array}{rcll} 
- \nu \Delta \Tilde \Gv  + \(\tilde \Gz_{1} \cdot \nabla \) \tilde \Gv  + \(\tilde \Gv \cdot \nabla \) \tilde \Gz_{2} + \nabla \Tilde p    & = & \Tilde \Gf  & \mbox{in }   \Tilde  \MO   , \\
 {\rm div}\,  \Tilde \Gv & = & \Tilde d   & \mbox{in } \Tilde  \MO , \\
\tilde \Gv & = & \0   & \mbox{on }  \partial \Tilde  \MO, \\
\dis \frac{\partial \Tilde \Gv}{\partial {\bf n}}& = & \0   & \mbox{on }  \partial \Tilde  \MO, \\
\tilde p& = & 0   &  \mbox{on }  \partial \Tilde \MO ,
\end{array}
\right.
\end{equation}
where $\Tilde \Gf\in {\bf L}^2(\Tilde\MO)$ and $\Tilde d\in \HH^1(\Tilde\MO)$ are given by $\Tilde \Gf=\Gf$ and $\Tilde d=d$ in $\MO$ and by $ \Tilde \Gf=- \nu \Delta \Tilde \Gv + \(\tilde \Gz_{1} \cdot \nabla \) \tilde \Gv + \(\tilde \Gv \cdot \nabla \) \tilde \Gz_{2}  + \nabla \Tilde p$ and $\Tilde d={\rm div}\,\Tilde \Gv$ in $\Tilde\MO\backslash\overline{\MO}$.
From the continuity of the extension operator $E$ we have:
\begin{equation}
\label{Est14-12-2014-1Oseen}
\|\Tilde \Gf\|_{{\bf L}^2(\Tilde\MO)} + \|\Tilde d\|_{\HH^1(\Tilde\MO)}  \leq C \, \Tilde {\mathfrak{m}}(\Gz_1,\Gz_2) \left(\|\Gv\|_{{\bf H}^2(\MO)}+\|p\|_{\HH^1(\MO)}\right).
\end{equation}
Next, by applying estimate~\eqref{CarlEstStokes1bis} of Theorem~\ref{CarlLocalStokes}:
 \begin{multline} \label{CarlAdaptOseen01}
 \int_{\Tilde\MO}\( \abs{\nabla \Tilde \Gv} 
 + s e^{\lambda \psi} |{\rm curl}\,\Tilde \Gv|^2 +s^2\lambda ^2e^{2\lambda \psi} |\Tilde \Gv|^2 \) e^{2 s e^{\lambda \psi}}{\rm d}x\\
  \leq C \left(\int_{\Tilde\MO} (s^{-1}\lambda^{-2}e^{-\lambda \psi} |\nabla \Tilde d|^2+\lambda ^{-2}|\Tilde \Gf - \(\tilde \Gz_{1} \cdot \nabla \) \tilde \Gv - \(\tilde \Gv \cdot \nabla \) \tilde \Gz_{2} |^2) e^{2 s e^{\lambda \psi}}{\rm d}x\right.\\
  \left. +\int_{\omega}  s^3\lambda ^2 e^{3\lambda \psi} |\Tilde \Gv|^2 e^{2 s e^{\lambda \psi}}{\rm d}x\right).
\end{multline}

Since $\Tilde \Gz_{1}\in \GLL^{\infty} (\Tilde\MO)$, we get
\BE \label{CarlAdaptOseen03}
\int_{\Tilde\MO} \lambda^{-2} \abs{\( \Tilde \Gz_{1} \cdot \nabla\) \Tilde \Gv}^2 e^{2s e^{\lambda \psi}}
\leq  \norm{\Tilde\Gz_{1}}^2_{\GLL^{\infty}(\Tilde\MO) } \lambda^{-2}  \int_{\Tilde\MO}  \abs{\nabla \Tilde \Gv}^2 e^{2s e^{\lambda\psi}} .
\EE
Moreover, since $\Tilde \Gz_{2} \in \GWW^{1,r} (\Tilde \MO)$, we use the H\"older's inequality and the continuous embedding $\GHH^{1}_{0}(\Tilde \MO) \hookrightarrow \GLL^{\frac{2r}{r-2}}(\Tilde\MO)$ to get:
\begin{multline}
\int_{\Tilde\MO} \lambda^{-2} \abs{\( \Tilde \Gv \cdot \nabla\) \Tilde \Gz_{2}}^2 e^{2s e^{\lambda \psi}}{\rm d}x \leq  \int_{\Tilde\MO}  \abs{\nabla \Tilde \Gz_{2}}^2 \abs{\lambda^{-1} \Tilde \Gv e^{s e^{\lambda \psi}}}^2{\rm d}x \\
\begin{array}{rcl}
 & \leq &  \norm{\nabla \Tilde \Gz_{2}}^2_{\LL^r(\Tilde\MO)} \norm{\lambda^{-1} \Tilde \Gv e^{s e^{\lambda \psi}}}^2_{\GLL^{\frac{2r}{r-2}}(\Tilde\MO)} \leq   C \norm{\nabla \Tilde \Gz_{2}}^2_{\LL^r(\Tilde\MO)} \norm{\nabla ( \lambda^{-1} \Tilde \Gv  e^{s  e^{\lambda \psi}} )}^2_{\LL^2 (\Tilde\MO)} \\
 & \leq & \dis C \norm{\nabla \Tilde \Gz_{2}}^2_{\LL^r(\Tilde\MO)} \lambda^{-2} \bigg( \int_{\Tilde\MO} \abs{\nabla \Tilde \Gv}^2 e^{2s e^{\lambda \psi}} {\rm d}x+\int_{\Tilde\MO} \abs{\Tilde \Gv}^2 s^2 \lambda ^2 e^{2\lambda\psi} e^{2s e^{\lambda\psi}} {\rm d}x \bigg) \label{CarlAdaptOseen02}.
 \end{array}
\end{multline}

Thus, gathering~\eqref{CarlAdaptOseen01},~\eqref{CarlAdaptOseen03} and~\eqref{CarlAdaptOseen02} and choosing $\lambda \geq \Tilde {\mathfrak{m}}(\Gz_1,\Gz_2)\, \widehat c $ for $\widehat c$ large enough (and  depending only on the geometry), the terms in $\Tilde \Gz_{1}$, $\Tilde \Gz_{2}$ at the right hand side of inequality~\eqref{CarlAdaptOseen01} can be absorbed and we obtain 
 \begin{multline}\label{EstTildeInterM}
 \int_{\Tilde\MO}\( |\nabla \Tilde \Gv|^2  + s e^{\lambda \psi} |{\rm curl}\,\Tilde \Gv|^2 + s^2\lambda ^2e^{2\lambda \psi} |\Tilde \Gv|^2 \) e^{2 s e^{\lambda \psi}}{\rm d}x\\
 \leq C \left(\int_{\Tilde\MO} (s^{-1}\lambda^{-2}e^{-\lambda \psi} |\nabla \Tilde d|^2+\lambda ^{-2}|\Tilde \Gf|^2) e^{2 s e^{\lambda \psi}}{\rm d}x
 +s^3\lambda^2 \int_{\omega}  e^{3\lambda \psi} |\Tilde \Gv|^2
 e^{2 s e^{\lambda \psi}}{\rm d}x\right).
\end{multline}
Moreover, 
\begin{multline*}
\dis  \int_{\Tilde\MO\backslash \overline{\MO}} (s^{-1}\lambda^{-2}e^{-\lambda \psi} |\nabla \Tilde d|^2+\lambda ^{-2}|\Tilde \Gf|^2) e^{2 s e^{\lambda \psi}}{\rm d}x \\
\leq  \dis \lambda^{-2} e^{2 s e^{\lambda c_0}} \int_{\Tilde\MO\backslash \overline{\MO}} (| \nabla \Tilde d |^2+|\Tilde \Gf|^2){\rm d}x\leq C \lambda^{-2} e^{2 s e^{\lambda c_0}} {\Tilde {\mathfrak{m}}(\Gz_1,\Gz_2)}^2\left(\|\Gv\|_{{\bf H}^2(\MO)}^2+\|p\|_{\HH^1(\MO)}^2\right).
\end{multline*}
In above calculations, we have used the fact that $\psi\leq c_0$ in $\Tilde\MO\backslash \overline{\MO}$ and \eqref{Est14-12-2014-1Oseen}. Then \eqref{CarlEst3OseenBIS} follows by combining the  above inequality with \eqref{EstTildeInterM}.

Finally, to prove~\eqref{CarlEst3OseenPressionBIS}, we first apply~\eqref{CarlEstStokesPress} to $(\Tilde\Gv, \Tilde p)$ which gives:
\BMN
\int_{\Tilde\MO}  s  e^{\lambda \psi} |{\rm div}\, \Tilde \Gv-\Tilde p |^2 e^{2 s e^{\lambda \psi}}{\rm d}x   
\\
 \leq C \left(\int_{\omega}  s e^{\lambda \psi} |{\rm div}\, \Tilde \Gv-\Tilde p |^2 e^{2 s e^{\lambda \psi}}{\rm d}x+\int_{\Tilde\MO}  \lambda^{-2}| \Tilde \Gf - \(\tilde \Gz_{1} \cdot \nabla \) \tilde \Gv - \(\tilde \Gv \cdot \nabla \) \tilde \Gz_{2} |^2 e^{2 s e^{\lambda \psi}}{\rm d}x\right).
\end{multline*}
Then, using \eqref{CarlAdaptOseen03} and \eqref{CarlAdaptOseen02} to estimate the last above integral, we obtain for $\widehat c$ large enough and $\lambda \geq \widehat c \, \Tilde {\mathfrak{m}}(\Gz_1,\Gz_2)$,
\begin{multline*}
\int_{\Tilde\MO}  s  e^{\lambda \psi} |{\rm div}\, \Tilde \Gv-\Tilde p |^2 e^{2 s e^{\lambda \psi}}{\rm d}x  \leq  C \left(\int_{\omega}  s e^{\lambda \psi} |{\rm div}\, \Tilde \Gv-\Tilde p |^2 e^{2 s e^{\lambda \psi}}{\rm d}x +\int_{\Tilde \MO} \lambda ^{-2}|\Tilde \Gf|^2e^{2 s e^{\lambda \psi}}{\rm d}x\right)\\
 + \int_{\Tilde \MO} (|\Tilde \nabla \Gv|^2+s^2\lambda ^2 e^{2\lambda \psi} |\Tilde \Gv|^2
 ) e^{2 s e^{\lambda \psi}}{\rm d}x.
\end{multline*}
Hence, we use~\eqref{EstTildeInterM} and the rest of the proof is the same as for \eqref{CarlEst3OseenBIS}. 
\end{proof}


\section{Stability estimates for Oseen and Navier-Stokes Equations}  \label{sectionStabEstimate}

In this section we use the Carleman inequalities given in Theorem~\ref{ThmCarlOseen} to obtain several stability estimates for both distributed and boundary observation. In particular, we prove Theorems~\ref{StabilityThmOseen} and~\ref{ThmStabNS}.
We first prove a H\"older type interior estimates and a global log type estimates for a distributed observation. Then, we use an extension of the domain procedure to obtain a global log type estimates for a boundary observation.

 We recall that $\Omega$ is a nonempty bounded open subset of $\mathbb{R}^N$ ($N=2$ or $N=3$) with a boundary $\partial\Omega$ of class $C^2$, that $\Gamma_{\rm obs} $ is a nonempty open subset of~$\partial\Omega$ and that $\omega$ is a nonempty open subset of $\Omega$. Moreover, $\Gz_{1}$, $\Gz_{2}$ are vector fields satisfying \eqref{Hypzi} and we use the following notation for the particular constant:
\begin{equation}\label{DefCz2}
\mathfrak{m}(\Gz_1,\Gz_2)\ov \max \{1 \, , \,\norm{\Gz_{1}}_{\GLL^{\infty}(\Omega)} \, , \, \norm{\nabla \Gz_{2}}_{\LL^r(\Omega)} \}.
\end{equation}
We also recall that $C>0$ denotes a generic constant only depending on the geometry and in particular independent on $s$, $\lambda$, $\Gz_1$, $\Gz_2$.


\subsection{Stability estimates with a distributed observation}\label{InagStabdistriObs}


\subsubsection{A H\"older type interior estimate}

\begin{theor} \label{ThmStabOseen01DISTRIBHolder}
Let $\Omega_{0}$ be an open subset such that $\omega \Subset \Omega_{0} \Subset \Omega$. 
There exist $\widehat c>0$, $\widehat s>1$ and $c_{1}^*> c_{2}^*>0$ such that for all $\Gz_{1},\Gz_{2}$ satisfying~\eqref{Hypzi}, all
$\lambda \geq \widehat \lambda \ov \mathfrak{m}(\Gz_1,\Gz_2) \widehat c  $ and all $s\geq \widehat s$, every solution $(\Gv,p)\in {\bf H}^2(\Omega)\times \HH^1(\Omega)$ of the Oseen equations~\eqref{EqVitesseFluideOseen} satisfies:
\BM\label{ineq14-12-2014-3OseenDISTRIBHolder}
\|\Gv\|_{{\bf L}^2(\Omega_{0})} + \| {\rm curl} \, \Gv  \|_{\({\LL}^2(\Omega_{0})\)^{2N-3}}   
\leq 
e^{s  e^{c_{1}^*\lambda}} \left(\|\Gf\|_{{\bf L}^{2}(\Omega)} + \norm{d}_{\HH^1(\Omega)}+\|\Gv\|_{{\bf L}^{2}(\omega)}\right) \\
 +e^{- s  e^{c_{2}^*\lambda}}\left(\|\Gv\|_{{\bf H}^2(\Omega)}+\|p\|_{\HH^1(\Omega)}\right) 
\end{multline}
and
\BM\label{EqThmStab01DISTRIBpHolder}
 \| p - \mydiv \Gv  \|_{{\LL}^2(\Omega_{0})} 
\leq 
e^{s  e^{c_{1}^*\lambda}  } \left(\|\Gf\|_{{\bf L}^{2}(\Omega)} + \norm{d}_{\HH^1(\Omega)}+\|\Gv\|_{{\bf L}^{2}(\omega)} +  \| p \|_{{\LL}^2(\omega)}  \right) \\
 + e^{- s  e^{c_{2}^*\lambda}} \left(\|\Gv\|_{{\bf H}^2(\Omega)}+\|p\|_{\HH^1(\Omega)}\right) .
 \end{multline}
\end{theor}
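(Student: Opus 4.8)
The plan is to deduce both estimates directly from the Oseen Carleman inequality of Theorem~\ref{ThmCarlOseen}, applied with $\MO=\Omega$ (so that $\Tilde{\mathfrak m}(\Gz_1,\Gz_2)=\mathfrak m(\Gz_1,\Gz_2)$ and the admissible parameter range is exactly $\lambda\geq \mathfrak m(\Gz_1,\Gz_2)\widehat c$, $s\geq \widehat s$). The H\"older structure will come entirely from comparing the size of the Carleman weight $e^{2se^{\lambda\psi}}$ on the compact interior set $\overline{\Omega_0}$, where $\psi$ is strictly above its boundary value, against its value $e^{2se^{\lambda c_0}}$ which controls the ``unobservable'' full-norm term on the right-hand side. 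Accordingly I set $a\ov \min_{\overline{\Omega_0}}\psi$ and $b\ov \max_{\overline{\Omega}}\psi$; by \eqref{eqpsi}, and since $\overline{\Omega_0}$ is a compact subset of $\Omega$ on which $\psi>c_0$, one has $c_0<a\leq b$, and these numbers depend only on the geometry (not on $\Gz_1,\Gz_2$, $s$, $\lambda$).

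For the first estimate I would start from \eqref{CarlEst3OseenBIS}. On the left I restrict the integral to $\Omega_0$ and keep only the zeroth-order contributions: for $s,\lambda$ large one has $s^2\lambda^2 e^{2\lambda\psi}\geq 1$ and $s e^{\lambda\psi}\geq 1$ there, so the left-hand side is bounded below by $e^{2se^{\lambda a}}\big(\|\Gv\|_{{\bf L}^2(\Omega_0)}^2+\|\curl\Gv\|_{({\LL}^2(\Omega_0))^{2N-3}}^2\big)$. On the right I use $\psi\leq b$ to bound the source integral by $Ce^{2se^{\lambda b}}(\|\Gf\|_{{\bf L}^2(\Omega)}^2+\|d\|_{\HH^1(\Omega)}^2)$ and the local term by $Cs^3\lambda^2 e^{3\lambda b}e^{2se^{\lambda b}}\|\Gv\|_{{\bf L}^2(\omega)}^2$, while the last term remains $e^{2se^{\lambda c_0}}(\|\Gv\|_{{\bf H}^2(\Omega)}^2+\|p\|_{\HH^1(\Omega)}^2)$. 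Dividing by $e^{2se^{\lambda a}}$ and taking square roots produces the three coefficients $e^{s(e^{\lambda b}-e^{\lambda a})}$, $s^{3/2}\lambda e^{3\lambda b/2}e^{s(e^{\lambda b}-e^{\lambda a})}$ and $e^{s(e^{\lambda c_0}-e^{\lambda a})}$. Choosing any $c_1^*>b$ lets the first two be absorbed into $e^{se^{c_1^*\lambda}}$ (the gain $e^{s(e^{c_1^*\lambda}-e^{\lambda b})}$ beats the polynomial prefactors once $\widehat s$ is enlarged), and choosing $c_0<c_2^*<a$ gives $e^{s(e^{\lambda c_0}-e^{\lambda a})}\leq e^{-se^{c_2^*\lambda}}$ for $\lambda$ large, since $e^{\lambda a}-e^{\lambda c_0}\geq \tfrac12 e^{\lambda a}\geq e^{c_2^*\lambda}$. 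This is exactly \eqref{ineq14-12-2014-3OseenDISTRIBHolder}, with $c_1^*>c_2^*>0$.

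The pressure estimate \eqref{EqThmStab01DISTRIBpHolder} follows the same scheme applied to \eqref{CarlEst3OseenPressionBIS}. Since $\mydiv\Gv=d$ in $\Omega$ by \eqref{EqVitesseFluideOseen}, the left-hand integrand $se^{\lambda\psi}|\mydiv\Gv-p|^2$ equals $se^{\lambda\psi}|p-d|^2$, so restricting to $\Omega_0$ and using $\psi\geq a$, $se^{\lambda\psi}\geq 1$ bounds the left side below by $e^{2se^{\lambda a}}\|p-\mydiv\Gv\|_{{\LL}^2(\Omega_0)}^2$. The local term now contributes both $\|\Gv\|_{{\bf L}^2(\omega)}$ (from the $s^3\lambda^2 e^{3\lambda\psi}|\Gv|^2$ piece) and, after $|p-d|^2\leq 2|p|^2+2|d|^2$, a term $\|p\|_{{\LL}^2(\omega)}$ together with a $\|d\|_{\HH^1(\Omega)}$ contribution; the source and full-norm terms are handled exactly as before. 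Dividing by $e^{2se^{\lambda a}}$ and keeping the same $c_1^*,c_2^*$ yields \eqref{EqThmStab01DISTRIBpHolder}.

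The main obstacle, and the only genuinely delicate point, is the geometric separation $c_0<a$, i.e.\ the fact that on the compact set $\overline{\Omega_0}$ the weight is strictly larger than on $\partial\Omega$; this is precisely what converts a single Carleman inequality into a H\"older interpolation between the observation data and the a priori bound, and it is where the properties of $\psi$ coming from \eqref{eqpsi} are used. The rest is bookkeeping: absorbing the algebraic prefactors $s^{3/2}\lambda e^{3\lambda b/2}$ into the exponential by taking $c_1^*$ strictly above $b$ (at the cost of enlarging $\widehat s$), and checking that the thresholds $\widehat c,\widehat s$ and the constants $c_1^*,c_2^*$ depend only on $\Omega$, $\Omega_0$, $\omega$ and $\psi$, hence not on $\Gz_1,\Gz_2$.
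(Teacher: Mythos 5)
Your proof is correct and follows essentially the same route as the paper: apply the Carleman inequality \eqref{CarlEst3OseenBIS} (resp.\ \eqref{CarlEst3OseenPressionBIS}) on $\Omega$, bound the weight below by its minimum over $\overline{\Omega_0}$ on the left and above by its maximum over $\overline{\Omega}$ on the right, divide by $e^{2se^{\lambda\psi_{\min}}}$, and use $\psi_{\min}>c_0$ on $\overline{\Omega_0}$ to produce the two separated exponents $c_1^*>c_2^*>0$. The paper's proof is just a more compressed version of the same bookkeeping.
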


\begin{proof}
Let us introduce $\dis \psi_{\min} \ov  \min_{x\in \Omega_{0}} \psi (x)$ and $\dis \psi_{\max} \ov \max_{x\in \Omega}\psi(x)$. We apply \eqref{CarlEst3OseenBIS} to $(\Gv,  p)$ to get, with $\widehat \lambda\leq \lambda$,
\begin{multline}  \label{ProofThmStabOseen01}
 \int_{\Omega} \( s^2\lambda ^2e^{2\lambda \psi} | \Gv|^2  +   s e^{\lambda \psi} |{\rm curl}\,  \Gv|^2 
 \) e^{2 s e^{\lambda \psi}} {\rm d}x  \\
\leq 
\dis C \left(\int_{ \Omega}  (| \Gf |^2+| \nabla d |^2) e^{2 s e^{\lambda \psi_{\rm max}}}{\rm d}x \right. 
 + s^3 \lambda^2 e^{3\lambda \psi_{\max}} e^{2 s e^{\lambda \psi_{\max}}} \norm{\Gv}^2_{\GLL^2(\omega)} \\
 \left.  + e^{2 s e^{\lambda c_0}}  \left(  \|\Gv\|_{{\bf H}^2(\Omega)}^2+\|p\|_{\HH^1(\Omega)}^2  \right)\right) 
\end{multline}
and then, 
\begin{multline*}
 \int_{\Omega_{0}} \( s^2\lambda ^2e^{2\lambda \psi_{\min}} | \Gv|^2 + s e^{\lambda \psi_{\min}} |{\rm curl}\,  \Gv|^2  \) e^{2 s e^{\lambda \psi_{\min}}} {\rm d}x 
\leq  
C \left(\int_{ \Omega}  (| \Gf |^2+| \nabla d |^2) e^{2 s e^{\lambda \psi_{\max}}}{\rm d}x \right. \\
 + s^3 \lambda^2 e^{3\lambda \psi_{\max}} e^{2 s e^{\lambda \psi_{\max}}} \norm{\Gv}_{\GLL^2(\omega)}^2 
 \left.  + e^{2 s e^{\lambda c_0}}\left(  \|\Gv\|_{{\bf H}^2(\Omega)}^2+\|p\|_{\HH^1(\Omega)}^2  \right)\right) .  
\end{multline*}
Thus, by dividing the above inequality by $e^{2 s e^{\lambda \psi_{\min}}}$ we obtain \eqref{ineq14-12-2014-3OseenDISTRIBHolder} for some $c_{1}^*> c_{2}^*>0$ independent on $\lambda$. Estimate \eqref{EqThmStab01DISTRIBpHolder} is obtained analogously.
\end{proof}

As a consequence of Theorem \ref{ThmStabOseen01DISTRIBHolder}, we have the following

\begin{theorem}\label{ThmStabOseen01DISTRIBHolder00}
Let $\Omega_{0}$ be an open subset such that $\omega \Subset \Omega_{0} \Subset \Omega$. There exists $c^*>0$ such that for all $\Gz_{1},\Gz_{2}$ satisfying~\eqref{Hypzi}, all $\lambda \geq \widehat \lambda \ov \mathfrak{m}(\Gz_1,\Gz_2) \widehat c$, there exists $\beta\in (0,1/2)$ such that every solution $(\Gv,p)\in {\bf H}^2(\Omega)\times \HH^1(\Omega)$ of the Oseen equations~\eqref{EqVitesseFluideOseen} satisfies:
\begin{multline*}
\norm{\Gv}_{\GLL^2(\Omega_{0})} + \norm{{\rm curl} \, \Gv}_{\(\GLL^2(\Omega_{0})\)^{2N-3}} \\
\leq  e^{e^{c^*\lambda}}\( \|\Gf\|_{{\bf L}^{2}(\Omega)} + \norm{d}_{\HH^1(\Omega)} +\|\Gv\|_{{\bf L}^{2}(\omega)} \)^\beta
\( \|\Gv\|_{{\bf H}^2(\Omega)}+\|p\|_{\HH^1(\Omega)} \)^{1-\beta}
\end{multline*}
and
\begin{multline*}
\norm{ p - {\rm div} \, \Gv}_{\LL^2(\Omega_{0})}\\
\leq    e^{e^{c^*\lambda}} \( \|\Gf\|_{{\bf L}^{2}(\Omega)} + \norm{d}_{\HH^1(\Omega)} +\|\Gv\|_{{\bf L}^{2}(\omega)} +\| p \|_{\LL^{2}(\omega)} \)^\beta
\( \|\Gv\|_{{\bf H}^2(\Omega)}+\|p\|_{\HH^1(\Omega)} \)^{1-\beta}.
\end{multline*}
\end{theorem}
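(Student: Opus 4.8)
The plan is to derive the Hölder estimate from the two-parameter bound \eqref{ineq14-12-2014-3OseenDISTRIBHolder} of Theorem~\ref{ThmStabOseen01DISTRIBHolder} by optimizing over the large Carleman parameter $s$. Fix $\lambda\geq\widehat\lambda$ and abbreviate $a\ov e^{c_1^*\lambda}$, $b\ov e^{c_2^*\lambda}$, together with $A\ov \|\Gf\|_{{\bf L}^2(\Omega)}+\norm{d}_{\HH^1(\Omega)}+\|\Gv\|_{{\bf L}^2(\omega)}$ and $B\ov\|\Gv\|_{{\bf H}^2(\Omega)}+\|p\|_{\HH^1(\Omega)}$, so that \eqref{ineq14-12-2014-3OseenDISTRIBHolder} reads $\|\Gv\|_{{\bf L}^2(\Omega_0)}+\|{\rm curl}\,\Gv\|_{(\LL^2(\Omega_0))^{2N-3}}\leq e^{sa}A+e^{-sb}B$ for every $s\geq\widehat s$. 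Since $c_1^*>c_2^*$ forces $a>b$, the exponent $\beta\ov b/(a+b)$ lies in $(0,1/2)$ and depends on $\lambda$, exactly as announced.

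I would then balance the two terms on the right-hand side. Assuming $A>0$, the choice $s_1\ov\frac{1}{a+b}\ln(B/A)$ equalizes them, each becoming $A^\beta B^{1-\beta}$; the degenerate case $A=0$ is disposed of by letting $s\to+\infty$, which forces the left-hand side to vanish. When $s_1\geq\widehat s$, this choice is admissible and yields at once $\|\Gv\|_{{\bf L}^2(\Omega_0)}+\|{\rm curl}\,\Gv\|_{(\LL^2(\Omega_0))^{2N-3}}\leq 2\,A^\beta B^{1-\beta}$, which is already stronger than the claimed bound.

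The factor $e^{e^{c^*\lambda}}$ originates entirely from the complementary regime $s_1<\widehat s$. There $B<A\,e^{\widehat s(a+b)}$, hence $B^\beta<A^\beta e^{\widehat s\beta(a+b)}=A^\beta e^{\widehat s\,e^{c_2^*\lambda}}$, using that $\beta(a+b)=b=e^{c_2^*\lambda}$. Coupling this with the elementary bound $\|\Gv\|_{{\bf L}^2(\Omega_0)}+\|{\rm curl}\,\Gv\|_{(\LL^2(\Omega_0))^{2N-3}}\leq C\,B$, valid because these are $\Omega_0$-norms dominated by the full ${\bf H}^2(\Omega)$-norm, gives $C\,B=C\,B^\beta B^{1-\beta}\leq C\,e^{\widehat s\,e^{c_2^*\lambda}}A^\beta B^{1-\beta}$.

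Collecting the two regimes, the constant is at most $\max\{2,\,C\,e^{\widehat s\,e^{c_2^*\lambda}}\}$, which I absorb into $e^{e^{c^*\lambda}}$ by fixing any $c^*>c_2^*$ and, if needed, enlarging $\widehat c$ in $\widehat\lambda\ov\mathfrak{m}(\Gz_1,\Gz_2)\widehat c$ so that $e^{c^*\lambda}\geq\ln C+\widehat s\,e^{c_2^*\lambda}$ holds throughout $\lambda\geq\widehat\lambda$; this establishes the first inequality. The pressure estimate follows line for line from \eqref{EqThmStab01DISTRIBpHolder}, the sole modification being the extra term $\|p\|_{\LL^2(\omega)}$ inside $A$. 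I expect the only genuinely delicate point to be the bookkeeping between the two ranges of $s$: the balancing value $s_1$ is usable only when it respects the admissibility threshold $s\geq\widehat s$, and it is precisely the failure of this condition that replaces a clean Hölder inequality by one carrying the $\lambda$-exponential prefactor.
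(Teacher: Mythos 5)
Your argument is correct and is essentially the paper's own proof: both start from the two-parameter bound \eqref{ineq14-12-2014-3OseenDISTRIBHolder} (resp.\ \eqref{EqThmStab01DISTRIBpHolder}), balance the two terms by choosing $s=\frac{1}{C_1^*+C_2^*}\ln(B/A)$ with $\beta=\frac{C_2^*}{C_1^*+C_2^*}=\frac{e^{c_2^*\lambda}}{e^{c_1^*\lambda}+e^{c_2^*\lambda}}\in(0,1/2)$, and treat the regime where this choice falls below $\widehat s$ via $B\leq e^{(C_1^*+C_2^*)\widehat s}A$ together with the trivial bound by $CB$. Your bookkeeping of the prefactor $e^{\widehat s\,e^{c_2^*\lambda}}$ in the second regime is in fact slightly more explicit than the paper's, which silently absorbs it into $e^{e^{c^*\lambda}}$.
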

\begin{proof}
Since the proofs are analogous we only prove the first inequality. For that, we apply Theorem~\ref{ThmStabOseen01DISTRIBHolder} and, for $s>\widehat s$, Inequality~\eqref{ineq14-12-2014-3OseenDISTRIBHolder} rewrites $\|\Gv\|_{{\bf L}^2(\Omega_{0})} + \| {\rm curl} \, \Gv  \|_{\({\LL}^2(\Omega_{0})\)^{2N-3}}   \leq e^{s  C_{1}^*} A +e^{- s  C_{2}^*}B$ where $C_{i}^* \ov  e^{c_{i}^*\lambda}$ for $i=1,2$. First, we suppose that $\frac{1}{C_{1}^*+C_{2}^*}\ln\(\frac{B}{A}\)\geq \widehat s$ and we choose $s=\frac{1}{C_{1}^*+C_{2}^*}\ln\(\frac{B}{A}\)$. Hence, we obtain
\BE\label{proofStabHolderDitrib2cas}
\norm{\Gv}_{\GLL^2(\Omega_{0})} + \norm{\curl \Gv}_{\(\GLL^2(\Omega_{0})\)^{2N-3}}
\leq 2 A^{\frac{C^*_{2}}{C^*_{1}+C^*_{2}}} B^{\frac{C^*_{1}}{C^*_{1}+C^*_{2}}} .
\EE
Secondly, if $\frac{1}{C_{1}^*+C_{2}^*}\ln\(\frac{B}{A}\) < \widehat s$, then $B < e^{(C_{1}^*+C_{2}^* )\widehat s} A$. Hence, we also obtain~\eqref{proofStabHolderDitrib2cas} using the existence of $C>0$ such that $\norm{\Gv}_{\GLL^2(\Omega_{0})} + \norm{\curl \Gv}_{\GLL^2(\Omega_{0})} \leq C B$.
\end{proof}
\begin{remark}
According to the proof of Theorem \ref{ThmStabOseen01DISTRIBHolder00}, we have $\beta=\beta(\lambda)=\frac{e^{c_{2}^*\lambda}}{e^{c_{1}^*\lambda}+e^{c_{2}^*\lambda}}$ where $c_1^*>c_2^*>0$ are the constants given in Theorem \ref{ThmStabOseen01DISTRIBHolder} which only depend on the geometry. Therefore, $\beta(\lambda)\in (0,1/2)$ and $\beta(\lambda)\to 0$ as $\lambda\to +\infty$.
\end{remark}


\subsubsection{A global logarithmic estimate}

\begin{theor} \label{ThmStabOseen01DISTRIB}
There exist $\widehat c>0$, $\widehat s>1$ such that for all $\Gz_{1},\Gz_{2}$ satisfying~\eqref{Hypzi}, all $\lambda \geq \widehat \lambda \ov \mathfrak{m}(\Gz_1,\Gz_2) \widehat c$ and all $s\geq \widehat s$, every solution $(\Gv,p)\in {\bf H}^2(\Omega)\times \HH^1(\Omega)$ of the Oseen equations~\eqref{EqVitesseFluideOseen} satisfies:
\BE
\|\Gv\|_{{\bf L}^2(\Omega)} 
\leq 
e^{s  e^{c^*\lambda}  } \left(\|\Gf\|_{{\bf L}^{2}(\Omega)} + \norm{d}_{\HH^1(\Omega)}+\|\Gv\|_{{\bf L}^{2}(\omega)}\right) 
 +\frac{1}{s}\left(\|\Gv\|_{{\bf H}^2(\Omega)}+\|p\|_{\HH^1(\Omega)}\right) ,
 \label{ineq14-12-2014-3OseenDISTRIB} 
\EE
\BE 
\norm{{\rm curl} \, \Gv}_{\(\LL^2(\Omega)\)^{2N-3}}  
\leq 
e^{s  e^{c^*\lambda}  } \left(\|\Gf\|_{{\bf L}^{2}(\Omega)} + \norm{d}_{\HH^1(\Omega)}+\|\Gv\|_{{\bf L}^{2}(\omega)}\right) 
 +\frac{1}{s^{1/2}}\left(\|\Gv\|_{{\bf H}^2(\Omega)}+\|p\|_{\HH^1(\Omega)}\right) 
 \label{EqThmStab01DISTRIB} 
\EE
and
\BM\label{EqThmStab01DISTRIBp}
 \| p - \mydiv \Gv  \|_{{\LL}^2(\Omega)} 
\leq 
e^{s  e^{c^*\lambda}  } \left(\|\Gf\|_{{\bf L}^{2}(\Omega)} + \norm{d}_{\HH^1(\Omega)}+\|\Gv\|_{{\bf L}^{2}(\omega)} +  \| p   \|_{{\LL}^2(\omega)}  \right) \\
 +\frac{1}{s^{1/2}}\left(\|\Gv\|_{{\bf H}^2(\Omega)}+\|p\|_{\HH^1(\Omega)}\right) .
 \end{multline}
\end{theor}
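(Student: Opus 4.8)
The three estimates are read off directly from the global Carleman inequalities of Theorem~\ref{ThmCarlOseen} applied on $\MO=\Omega$, following the same scheme as the proof of Theorem~\ref{ThmStabOseen01DISTRIBHolder} but keeping the \emph{whole} domain instead of an interior subset. The decisive feature is that $\psi$ attains its minimum $c_0$ on $\partial\Omega$, so on $\Omega$ one has $\min_{\overline\Omega}\psi=c_0$ while $\psi>c_0$ in the interior; this is exactly what degrades the exponentially small remainder $e^{-se^{c_2^*\lambda}}$ of the interior estimate into the weaker polynomial factors $\tfrac1s$ and $\tfrac1{s^{1/2}}$.

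I would start from \eqref{CarlEst3OseenBIS}. On the left-hand side, since $t\mapsto e^{2\lambda t}e^{2se^{\lambda t}}$ is increasing and $\psi\geq c_0$ on $\overline\Omega$, I bound the weight from below by its value at $\psi=c_0$, which pulls out $s^2\lambda^2 e^{2\lambda c_0}e^{2se^{\lambda c_0}}\|\Gv\|_{{\bf L}^2(\Omega)}^2$ from the velocity term and $se^{\lambda c_0}e^{2se^{\lambda c_0}}\|\curl\Gv\|^2$ from the curl term. On the right-hand side I set $\psi_{\max}\ov\max_{\overline\Omega}\psi>c_0$ and bound the weight in the source and observation integrals from above by $e^{2se^{\lambda\psi_{\max}}}$, while the last term keeps its own factor $e^{2se^{\lambda c_0}}$. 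This is the exact analogue of the two bounds already performed in the proof of Theorem~\ref{ThmStabOseen01DISTRIBHolder}, now with the global minimum $\psi_{\min}=c_0$ in place of the interior one.

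Dividing by the left-hand prefactor produces the two different behaviours. The source and observation terms acquire the factor $e^{2s(e^{\lambda\psi_{\max}}-e^{\lambda c_0})}$ together with harmless powers of $s$ and $\lambda$; choosing $c^*>\psi_{\max}$ (depending only on the geometry) these are absorbed into $e^{2se^{c^*\lambda}}$, the exponential gap $e^{c^*\lambda}-e^{\lambda\psi_{\max}}$ dominating every polynomial prefactor for $s\geq\widehat s$ and $\lambda\geq\widehat\lambda$. The last term, however, after division by $s^2\lambda^2 e^{2\lambda c_0}e^{2se^{\lambda c_0}}$, loses its exponential \emph{entirely} and becomes $Cs^{-2}\lambda^{-2}e^{-2\lambda c_0}(\|\Gv\|_{{\bf H}^2(\Omega)}^2+\|p\|_{\HH^1(\Omega)}^2)$. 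Taking square roots and using $\sqrt{a+b+c}\leq\sqrt a+\sqrt b+\sqrt c$ then gives \eqref{ineq14-12-2014-3OseenDISTRIB}, the remainder coefficient $\tfrac{C^{1/2}}{s\lambda e^{\lambda c_0}}$ being $\leq\tfrac1s$ for $\lambda\geq\widehat\lambda$ large enough. For the curl the left-hand prefactor is only $s$ (not $s^2\lambda^2$), so the same computation leaves $\tfrac1{s^{1/2}}$ instead of $\tfrac1s$, which is \eqref{EqThmStab01DISTRIB}. Estimate \eqref{EqThmStab01DISTRIBp} follows identically from the pressure inequality \eqref{CarlEst3OseenPressionBIS}: its left-hand prefactor is again $s$ (hence the $\tfrac1{s^{1/2}}$), one uses $\mydiv\Gv=d$ so that $|p-\mydiv\Gv|=|p-d|$, and the local term $\|p-d\|_{{\LL}^2(\omega)}\leq\|p\|_{{\LL}^2(\omega)}+\|d\|_{\HH^1(\Omega)}$ accounts for the extra $\|p\|_{{\LL}^2(\omega)}$ appearing in the observation.

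The work here is essentially bookkeeping, and the only point requiring care is tracking which power of $s$ survives in the remainder term: degree $2$ in the Carleman weight for $\Gv$ versus degree $1$ for $\curl\Gv$ and for $p-\mydiv\Gv$, which is precisely what separates the $\tfrac1s$ estimate from the two $\tfrac1{s^{1/2}}$ estimates. One must also verify that the polynomial prefactors $s^a\lambda^b$ multiplying $e^{2se^{\lambda\psi_{\max}}}$ are genuinely swallowed by $e^{2se^{c^*\lambda}}$ uniformly in $s\geq\widehat s$ and $\lambda\geq\widehat\lambda$, which is where the strict inequality $c^*>\psi_{\max}$ is used.
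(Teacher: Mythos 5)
Your proposal is correct and follows essentially the same route as the paper: apply the global Carleman inequality \eqref{CarlEst3OseenBIS} (resp.\ \eqref{CarlEst3OseenPressionBIS}) on $\Omega$, bound the weight below on the left using $\psi\geq c_0$ and above on the right by $e^{2se^{\lambda\psi_{\max}}}$, divide by $e^{2se^{\lambda c_0}}$ times the surviving power of $s$, absorb the polynomial prefactors into $e^{2se^{c^*\lambda}}$, and take square roots. Your accounting of why the velocity term yields $1/s$ while the curl and pressure terms yield only $1/s^{1/2}$ (weight $s^2\lambda^2$ versus $s$) is exactly the mechanism in the paper's proof.
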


\begin{proof}
We apply \eqref{CarlEst3OseenBIS} to $(\Gv,  p)$, with $\widehat \lambda\leq \lambda$, to get~\eqref{ProofThmStabOseen01} as in Theorem~\ref{ThmStabOseen01DISTRIBHolder}. 
Thus, by dividing inequality~\eqref{ProofThmStabOseen01} by $e^{2 s e^{\lambda c_0}}$ and using the fact that 
$$e^{-2 s e^{\lambda c_0}}\int_{\Omega} e^{2\lambda \psi} |\Gv|^2 e^{2 s e^{\lambda \psi}}{\rm d}x\geq \int_{\Omega}  |\Gv|^2 {\rm d}x$$ 
we obtain \eqref{ineq14-12-2014-3OseenDISTRIB} and \eqref{EqThmStab01DISTRIB}  for some $c^*>0$ large enough (independent on $\lambda$). 
Proceeding as previously (but with~\eqref{CarlEst3OseenPressionBIS} instead of~\eqref{CarlEst3OseenBIS}) we obtain \eqref{EqThmStab01DISTRIBp}.
\end{proof}

Then, we deduce the following

\begin{theorem} \label{StabilityThmOseenDISTRIB}
There exist $\widehat c>0$ and $c^*>0$ such that for all $\Gz_{1},\Gz_{2}$ satisfying~\eqref{Hypzi}, all $\lambda \geq \widehat \lambda \ov \mathfrak{m}(\Gz_1,\Gz_2)\, \widehat c$, every solution $(\Gv,p)\in {\bf H}^2(\Omega)\times \HH^1(\Omega)$ of the Oseen equations~\eqref{EqVitesseFluideOseen} such that $\|\Gv\|_{{\bf H}^2(\Omega)}+\|p\|_{\HH^1(\Omega)} \leq M$ for some $M>0$ satisfies:
\begin{equation}\label{ineqStabOseenDISTRIB}
 \norm{\Gv}_{\GLL^2(\Omega)} \leq   \frac{ { e^{e^{c^* \lambda}}}M}{\ln\left(\displaystyle 1+\frac{M}{\|\Gf\|_{{\bf L}^{2}(\Omega)} + \norm{d}_{\HH^1(\Omega)} +\|\Gv\|_{{\bf L}^{2}(\omega)}}\right)} ,
\end{equation}
\BE\label{ineqStabOseen02DISTRIBcurl}
 \norm{{\rm curl} \, \Gv}_{\(\LL^2(\Omega)\)^{2N-3}} 
 \leq  \frac{{e^{e^{c^* \lambda}}}M}{ \( \ln\left(\displaystyle 1+\frac{M}{\|\Gf\|_{{\bf L}^{2}(\Omega)} + \norm{d}_{\HH^1(\Omega)} +\|\Gv\|_{{\bf L}^{2}(\omega)}}\right) \)^{1/2}}
\EE
and
\BE\label{ineqStabOseen02DISTRIB}
 \norm{ p - {\rm div} \, \Gv}_{\LL^2(\Omega)} 
 \leq  \frac{{e^{e^{c^* \lambda}}}M}{ \( \ln\left(\displaystyle 1+\frac{M}{\|\Gf\|_{{\bf L}^{2}(\Omega)} + \norm{d}_{\HH^1(\Omega)} +\|\Gv\|_{{\bf L}^{2}(\omega)}+\left\|p\right\|_{\LL^{2}(\omega)}}\right) \)^{1/2}} .
\EE
\end{theorem}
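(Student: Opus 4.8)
The estimate is a direct consequence of Theorem~\ref{ThmStabOseen01DISTRIB}, obtained by optimizing over the free Carleman parameter $s$. I would first fix $\lambda\geq\widehat\lambda$, abbreviate $C_\lambda\ov e^{c^*\lambda}$ (with $c^*$ the constant of Theorem~\ref{ThmStabOseen01DISTRIB}), and set
$$A\ov \|\Gf\|_{{\bf L}^2(\Omega)} + \norm{d}_{\HH^1(\Omega)}+\|\Gv\|_{{\bf L}^2(\omega)}.$$
Using the a priori bound $\|\Gv\|_{{\bf H}^2(\Omega)}+\|p\|_{\HH^1(\Omega)}\leq M$ to replace those norms by $M$, inequality~\eqref{ineq14-12-2014-3OseenDISTRIB} then reads
$$\|\Gv\|_{{\bf L}^2(\Omega)}\leq e^{sC_\lambda}\,A+\frac{M}{s}\qquad\text{for every }s\geq\widehat s.$$
The whole point is that the right-hand side is free in $s$, so it only remains to choose $s$ so that the exponentially growing term and the $1/s$ term are simultaneously small. (If $A=0$ one lets $s\to+\infty$ to get $\|\Gv\|_{{\bf L}^2(\Omega)}=0$, so I may assume $A>0$.)

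The plan is to take $s=s_0\ov \frac{1}{2C_\lambda}\ln\!\big(1+\tfrac{M}{A}\big)$ and to distinguish two cases. If $s_0\geq\widehat s$, then $e^{s_0C_\lambda}=(1+M/A)^{1/2}$, so the first term equals $\sqrt{A(A+M)}$ while the second equals $\tfrac{2C_\lambda M}{\ln(1+M/A)}$. The first term I would control by the elementary inequality
$$\sqrt{A(A+M)}\;\ln\!\Big(1+\tfrac{M}{A}\Big)\leq C\,M,$$
which (setting $t=M/A$) reduces to the boundedness on $(0,\infty)$ of $t\mapsto t^{-1}(1+t)^{1/2}\ln(1+t)$, a function tending to $1$ as $t\to 0^+$ and to $0$ as $t\to+\infty$. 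Summing the two contributions gives $\|\Gv\|_{{\bf L}^2(\Omega)}\leq (C+2C_\lambda)\,\dfrac{M}{\ln(1+M/A)}$.

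If instead $s_0<\widehat s$, then $\ln(1+M/A)<2C_\lambda\widehat s$, so $\ln(1+M/A)$ is itself controlled by $C_\lambda$; combining this with the trivial bound $\|\Gv\|_{{\bf L}^2(\Omega)}\leq M$ yields $\|\Gv\|_{{\bf L}^2(\Omega)}\leq 2C_\lambda\widehat s\,\dfrac{M}{\ln(1+M/A)}$ as well. In either case the prefactor is at most $C+2\widehat s\,e^{c^*\lambda}$, which, after possibly enlarging $\widehat c$ and relabeling $c^*$ so that $\ln(C+2\widehat s\,e^{c^*\lambda})\leq e^{c^*\lambda}$ holds for all $\lambda\geq\widehat\lambda$, is bounded by $e^{e^{c^*\lambda}}$; this is exactly~\eqref{ineqStabOseenDISTRIB}. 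The curl and pressure estimates~\eqref{ineqStabOseen02DISTRIBcurl} and~\eqref{ineqStabOseen02DISTRIB} follow identically from~\eqref{EqThmStab01DISTRIB} and~\eqref{EqThmStab01DISTRIBp}: with the same $s_0$ the term $M/s^{1/2}$ produces $\sqrt{2C_\lambda}\,M/\sqrt{\ln(1+M/A)}$, and since $\ln(1+M/A)\geq 2C_\lambda\widehat s>1$ in the first case the $\sqrt{A(A+M)}$ term is in turn absorbed into $C\,M/\sqrt{\ln(1+M/A)}$ (for the pressure one only adds $\|p\|_{\LL^2(\omega)}$ to $A$).

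There is no genuine analytic obstacle here: all the substance lives in the Carleman inequality of Theorem~\ref{ThmCarlOseen} and its already-established consequence Theorem~\ref{ThmStabOseen01DISTRIB}. The only points demanding care are the case split at $s_0=\widehat s$, needed to keep $s$ in the admissible range, and the bookkeeping required to absorb every polynomial-in-$C_\lambda$ factor into the double-exponential prefactor $e^{e^{c^*\lambda}}$; this is precisely the same two-case structure already used in the proof of Theorem~\ref{ThmStabOseen01DISTRIBHolder00}.
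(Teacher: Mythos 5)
Your proposal is correct and follows essentially the same route as the paper: the same choice $s=\frac{1}{2C^*}\ln\left(1+\frac{M}{A}\right)$, the same case split at $s=\widehat s$, and the same absorption of the resulting factors into the prefactor $e^{e^{c^*\lambda}}$. The only (harmless) cosmetic differences are that you verify the elementary bound $\sqrt{A(A+M)}\,\ln\left(1+\frac{M}{A}\right)\leq C M$ via boundedness of a single function of $t=M/A$ where the paper sub-splits into $M\leq A$ and $M>A$, and in the second case you invoke the trivial bound $\norm{\Gv}_{\GLL^2(\Omega)}\leq M$ rather than re-evaluating the inequality at $s=\widehat s$.
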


\begin{proof}
We apply Theorem~\ref{ThmStabOseen01DISTRIB} and, for $s>\hat s$,  we introduce $A$ such that we rewrite~\eqref{ineq14-12-2014-3OseenDISTRIB} as $ \norm{\Gv}_{\GLL^2(\Omega)} \leq e^{s C^* }A + \frac{c^*}{s} M$ where $C^* \ov  e^{c^* \lambda}$.  First,  if $A=0$, since the previous inequality is true for all $s$, we obtain $\norm{\Gv}_{\GLL^2(\Omega)} =0$ and then \eqref{ineqStabOseenDISTRIB} holds. In the following, we assume $A \neq 0$.

Next, we suppose that $\frac{1}{2C^*}\ln(1+\frac{M}{A})\geq \widehat s$ and we  choose $s=\frac{1}{2C^*}\ln(1+\frac{M}{A})$. It yields
\begin{equation}\nonumber
\begin{split}
\norm{\Gv}_{\GLL^2(\Omega)} \leq   M\left(\left(1+\frac{M}{A}\right)^{1/2}\frac{A}{M}+\frac{2 C^* c^*}{\ln(1+\frac{M}{A}))}\right)
\end{split}
\end{equation}
and next, using the fact that $\frac{1}{x}\leq \frac{1}{\ln(1+x)}$ if $0<x<1$, \textit{i.e.} $M\leq A$ and $\frac{1}{x^{1/2}}\leq \frac{1}{\ln(1+x)}$ if $x>1$, \textit{i.e.} $M>A$, we obtain \eqref{ineqStabOseenDISTRIB} (by choosing $c ^*>0$ larger if necessary). 

In the case $\frac{1}{2C^*}\ln(1+\frac{M}{A})\leq \widehat s$ we have $ M\leq e^{e^{c^* \lambda}} A$ for some  (other) constant $ c^*>0$ and \eqref{ineq14-12-2014-3OseenDISTRIB} with $s=\widehat s$ gives $\norm{\Gv}_{\GLL^2(\Omega)} \leq  e^{e^{c^* \lambda}} A$ for some (other) constant $ c^*>0$. Then the conclusion follows from $A=M\frac{A}{M} \leq M\frac{1}{\ln(1+\frac{M}{A})}$ (since $\frac{1}{x}\leq \frac{1}{\ln(1+x)}$ for all $x>0$).

The proof of~\eqref{ineqStabOseen02DISTRIBcurl} and~\eqref{ineqStabOseen02DISTRIB} are obtained in a similar way.
\end{proof}


\subsection{Stability estimates with boundary observation}\label{StabEstBoundaryObs}

We now prove the following theorem from which we deduce the logarithm estimates stated in Theorem~\ref{StabilityThmOseen} as in the proof of Theorem~\ref{StabilityThmOseenDISTRIB}. Notice that the first estimate~\eqref{ineqStabOseen0} is given in the previous Theorem~\ref{StabilityThmOseenDISTRIB} (see~\eqref{ineqStabOseenDISTRIB}).

\begin{theor} \label{ThmStabOseen01}
There exists $\widehat c>0$, $\widehat s>1$ such that for all $\Gz_{1},\Gz_{2}$ satisfying~\eqref{Hypzi}, all $\lambda \geq \widehat \lambda \ov \mathfrak{m}(\Gz_1,\Gz_2) \widehat c$ and all $s\geq \widehat s$, every solution $(\Gv,p)\in {\bf H}^2(\Omega)\times \HH^1(\Omega)$ of the Oseen equations~\eqref{EqVitesseFluideOseen} satisfies:
\BM\label{ineq14-12-2014-3Oseen}
\|\Gv\|_{{\bf L}^2(\Omega)} \leq 
e^{s  e^{c^*\lambda}  } \left(\|\Gf\|_{{\bf L}^{2}(\Omega)} + \norm{d}_{\HH^1(\Omega)}+\|\Gv\|_{{\bf H}^{3/2}(\Gamma_{\rm obs})}+\left\| \sigma(\Gv,p){\bf n} \right\|_{{\bf H}^{1/2}(\Gamma_{\rm obs})}\right) \\
 +\frac{1}{s}\left(\|\Gv\|_{{\bf H}^2(\Omega)}+\|p\|_{\HH^1(\Omega)}\right) 
\end{multline}
and
\BM\label{EqThmStab01}
\norm{{\rm curl} \, \Gv}_{\(\LL^2(\Omega)\)^{2N-3}}  + \| p - \mydiv \Gv  \|_{{\LL}^2(\Omega)}  \\
\leq 
e^{s  e^{c^*\lambda}  } \left(\|\Gf\|_{{\bf L}^{2}(\Omega)} + \norm{d}_{\HH^1(\Omega)}+\|\Gv\|_{{\bf H}^{3/2}(\Gamma_{\rm obs})}+\left\| \sigma(\Gv,p){\bf n} \right\|_{{\bf H}^{1/2}(\Gamma_{\rm obs})}\right) \\
 +\frac{1}{s^{1/2}}\left(\|\Gv\|_{{\bf H}^2(\Omega)}+\|p\|_{\HH^1(\Omega)}\right) .
\end{multline}
\end{theor}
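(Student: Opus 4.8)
The plan is to reduce the boundary observation to the distributed observation already handled in Theorem~\ref{ThmStabOseen01DISTRIB}, by extending $\Omega$ across $\Gamma_{\rm obs}$ and encoding the Cauchy data $\left(\restriction{\Gv}{\Gamma_{\rm obs}},\restriction{\sigma(\Gv,p)\Gn}{\Gamma_{\rm obs}}\right)$ as a solution living in the added piece. Concretely, I would fix a bounded $C^2$ domain $\tilde\Omega$ with $\Omega\Subset\tilde\Omega$ such that $\mathcal{O}\ov\tilde\Omega\setminus\overline\Omega$ is a ``bump'' glued to $\Omega$ along $\Gamma_{\rm obs}$, so that $\Gamma_{\rm obs}$ becomes an interior $C^2$ hypersurface of $\tilde\Omega$, and I would pick a nonempty open set $\omega$ with $\omega\Subset\mathcal{O}$.

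The heart of the construction is a continuous lifting of the Cauchy data into the bump. Using standard trace theory for the velocity--pressure pair, I would build $(\Gw,q)\in\GHH^2(\mathcal{O})\times\HH^1(\mathcal{O})$ with $\Gw=\Gv$, $\sigma(\Gw,q)\Gn=\sigma(\Gv,p)\Gn$ and $\mydiv\Gw=d$ on $\Gamma_{\rm obs}$ (these traces matching exactly the trace spaces $\GHH^{3/2}(\Gamma_{\rm obs})\times\GHH^{1/2}(\Gamma_{\rm obs})$ of $\GHH^2\times\HH^1$), together with the continuity bound $\norm{\Gw}_{\GHH^2(\mathcal{O})}+\norm{q}_{\HH^1(\mathcal{O})}\leq C\left(\norm{\Gv}_{\GHH^{3/2}(\Gamma_{\rm obs})}+\norm{\sigma(\Gv,p)\Gn}_{\GHH^{1/2}(\Gamma_{\rm obs})}\right)$. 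Gluing $(\tilde\Gv,\tilde p)\ov(\Gv,p)$ in $\Omega$ and $(\Gw,q)$ in $\mathcal{O}$ produces a pair whose Dirichlet trace, symmetric normal stress, and divergence all match across $\Gamma_{\rm obs}$. A short jump computation shows this forces $[\nabla\Gv]=0$ and $[p]=0$ there, hence $(\tilde\Gv,\tilde p)\in\GHH^2(\tilde\Omega)\times\HH^1(\tilde\Omega)$ and it is a genuine solution of the Oseen system in $\tilde\Omega$ with data $(\tilde\Gf,\tilde d)\in\GLL^2(\tilde\Omega)\times\HH^1(\tilde\Omega)$ equal to $(\Gf,d)$ on $\Omega$.

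I would then apply Theorem~\ref{ThmStabOseen01DISTRIB} on $\tilde\Omega$ to $(\tilde\Gv,\tilde p)$ with distributed observation set $\omega\Subset\mathcal{O}$. Restricting the left-hand sides to $\Omega$ recovers $\norm{\Gv}_{\GLL^2(\Omega)}$, $\norm{\curl\Gv}_{\LL^2(\Omega)}$ and $\norm{p-\mydiv\Gv}_{\LL^2(\Omega)}$. On the right-hand side every new quantity is controlled by the Cauchy data through the lifting bound: one has $\norm{\tilde\Gf}_{\GLL^2(\tilde\Omega)}+\norm{\tilde d}_{\HH^1(\tilde\Omega)}\leq\norm{\Gf}_{\GLL^2(\Omega)}+\norm{d}_{\HH^1(\Omega)}+C\,\mathfrak{m}(\Gz_1,\Gz_2)\left(\norm{\Gv}_{\GHH^{3/2}(\Gamma_{\rm obs})}+\norm{\sigma(\Gv,p)\Gn}_{\GHH^{1/2}(\Gamma_{\rm obs})}\right)$, the factor $\mathfrak{m}(\Gz_1,\Gz_2)\leq\lambda$ being absorbed into the exponential prefactor $e^{s\,e^{c^*\lambda}}$ after enlarging $c^*$; moreover $\norm{\tilde\Gv}_{\GLL^2(\omega)}=\norm{\Gw}_{\GLL^2(\omega)}$ and, for the pressure estimate, $\norm{\tilde p}_{\LL^2(\omega)}=\norm{q}_{\LL^2(\omega)}$ are bounded by the same Cauchy data, so the extra observation term $\norm{p}_{\LL^2(\omega)}$ of \eqref{EqThmStab01DISTRIBp} is absorbed into the boundary data. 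Finally $\norm{\tilde\Gv}_{\GHH^2(\tilde\Omega)}+\norm{\tilde p}_{\HH^1(\tilde\Omega)}\leq C\left(\norm{\Gv}_{\GHH^2(\Omega)}+\norm{p}_{\HH^1(\Omega)}\right)$ since the lifted part is dominated by the traces of $(\Gv,p)$, which keeps the remainder terms of \eqref{ineq14-12-2014-3OseenDISTRIB}, \eqref{EqThmStab01DISTRIB} and \eqref{EqThmStab01DISTRIBp} of the form $\frac{1}{s}(\cdots)$ and $\frac{1}{s^{1/2}}(\cdots)$ up to a harmless constant. Collecting these bounds yields \eqref{ineq14-12-2014-3Oseen} and \eqref{EqThmStab01}.

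I expect the main obstacle to be the construction and justification of the extension $(\tilde\Gv,\tilde p)$. The subtle point is that the conormal derivative of the $\Delta$-form operator is $\nu\partial_{\Gn}\Gv-p\Gn$, which differs from the observed $\sigma(\Gv,p)\Gn=2\nu\MD(\Gv)\Gn-p\Gn$ by a term proportional to $(\partial_{\Gn}\Gv\cdot\Gn)\Gn$; this discrepancy is exactly $\left[\mydiv\Gv\right]=\partial_{\Gn}\Gv\cdot\Gn$ across $\Gamma_{\rm obs}$. Thus one must arrange the lifting so that \emph{both} $\sigma\Gn$ \emph{and} $\mydiv\Gv$ are continuous across $\Gamma_{\rm obs}$ (a consistent, well-posed $N+1$ conditions on $(\partial_{\Gn}\Gw,q)$ by ellipticity of the Stokes conormal system); these two matchings then force $\left[\nabla\Gv\right]=0$ and $[p]=0$, giving the $\GHH^2\times\HH^1$ regularity and the absence of any surface Dirac source needed to feed $(\tilde\Gv,\tilde p)$ into Theorem~\ref{ThmStabOseen01DISTRIB}. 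Everything else is bookkeeping of constants and a verbatim repetition, on $\tilde\Omega$, of the passage from the distributed estimates to the claimed inequalities.
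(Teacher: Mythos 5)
Your proposal is correct and follows essentially the same route as the paper: extend $\Omega$ across $\Gamma_{\rm obs}$, lift the boundary data into the added bump with a trace right-inverse, place the observation set $\omega$ in the bump, run the Carleman/distributed-observation estimate on the extended domain, and use the algebraic relations between $\sigma(\Gv,p)\Gn$, $\partial_{\Gn}\Gv$, $p$ and $d$ on $\Gamma_{\rm obs}$. The only (cosmetic) differences are that the paper lifts the triple $(\Gv,\partial_{\Gn}\Gv,p)$ and converts to $\sigma(\Gv,p)\Gn$ only at the end (so it never needs the glued pair to solve a jump-free Oseen system, only to lie in $\GHH^2\times\HH^1$), whereas you lift $(\Gv,\sigma(\Gv,p)\Gn,\mydiv\Gv)$ directly --- note your lifting bound should also carry a $\norm{d}_{\HH^{1/2}(\Gamma_{\rm obs})}$ term, which is harmless since $\norm{d}_{\HH^1(\Omega)}$ already appears on the right-hand side.
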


Let us begin by proving the following lemma which is a construction of an extension of the domain $\Omega$ and of the solution $(\Gv,p)$ of Problem~\eqref{EqVitesseFluideOseen}:

\begin{lemma} \label{lemmaExtension}
Let $\Tilde\Omega$ be an extension of $\Omega$ of class $C^2$ through $\Gamma_{\rm obs}$ (see Figure~\ref{FigNotationDomains}), namely
$$
\Tilde\Omega \mbox{ is of class $C^2$},\quad \partial\Omega \cap \Tilde\Omega =\Gamma_{\rm obs}.
$$
There exists an extension $(\tilde \Gv , \tilde p)\in \GHH^2(\tilde \Omega) \times \HH^1(\tilde \Omega)$ of $(\Gv,p)\in \GHH^2(\Omega) \times \HH^1(\Omega)$ such that
\BEAN
\restriction{\tilde \Gv}{\Gamma_{\rm obs}} = \restriction{\Gv}{\Gamma_{\rm obs}} ,
& \restriction{\frac{\partial \tilde \Gv}{\partial\Gn}}{\Gamma_{\rm obs}} = \restriction{\frac{\partial\Gv}{\partial\Gn}}{\Gamma_{\rm obs}}  ,
& 
\restriction{\tilde p}{\Gamma_{\rm obs}} = \restriction{p}{\Gamma_{\rm obs}} 
\EEAN
with the following estimate
\begin{equation}\label{estExt4TraceOseen}
\|\Tilde \Gv\|_{{\bf H}^2(\priv{\Tilde\Omega}{\Omega})}^2+\|\Tilde p\|_{\HH^1(\priv{\Tilde\Omega}{\Omega})}^2\leq C\left(\|\Gv\|_{{\bf H}^{3/2}(\Gamma_{\rm obs})}^2 + \norm{\frac{\partial\Gv}{\partial\Gn}}_{{\bf H}^{1/2}(\Gamma_{\rm obs})}^2 + \|p\|_{{\HH}^{1/2}(\Gamma_{\rm obs})}^2\right) .
\end{equation}
In particular,
\begin{equation}\label{estExt4Oseen}
\|\Tilde \Gv\|_{{\bf H}^2(\Tilde\Omega)}^2+\|\Tilde p\|_{\HH^1(\Tilde\Omega)}^2\leq C\left(\|\Gv\|_{{\bf H}^2(\Omega)}^2+\|p\|_{\HH^1(\Omega)}^2\right).
\end{equation}
\end{lemma}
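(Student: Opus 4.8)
The plan is to build the extension separately on the cap $D \ov \priv{\Tilde\Omega}{\Omega}$ and then to glue it to $(\Gv,p)$ across $\Gamma_{\rm obs}$. Since $\Tilde\Omega = \Omega \cup \Gamma_{\rm obs} \cup D$ and $\Gamma_{\rm obs}$ is a $C^2$ hypersurface separating $\Omega$ from $D$ (lying in the interior of $\Tilde\Omega$ except along its edge $\partial\Gamma_{\rm obs}\subset\partial\Tilde\Omega$), the field equal to $\Gv$ on $\Omega$ and to a candidate $\Tilde\Gv$ on $D$ belongs to $\GHH^2(\Tilde\Omega)$ if and only if neither its value nor its normal derivative jumps across $\Gamma_{\rm obs}$, and the piecewise pressure belongs to $\HH^1(\Tilde\Omega)$ if and only if its value does not jump. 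These transmission conditions are precisely the three identities $\restriction{\Tilde\Gv}{\Gamma_{\rm obs}}=\restriction{\Gv}{\Gamma_{\rm obs}}$, $\restriction{\frac{\partial\Tilde\Gv}{\partial\Gn}}{\Gamma_{\rm obs}}=\restriction{\frac{\partial\Gv}{\partial\Gn}}{\Gamma_{\rm obs}}$ and $\restriction{\Tilde p}{\Gamma_{\rm obs}}=\restriction{p}{\Gamma_{\rm obs}}$ required in the statement. It therefore suffices to produce $(\Tilde\Gv,\Tilde p)\in\GHH^2(D)\times\HH^1(D)$ realizing the prescribed Cauchy data of $\Gv$ and value of $p$ on $\Gamma_{\rm obs}$, with $\GHH^2(D)\times\HH^1(D)$ norm controlled by those data alone.

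First I would record the boundary data. By the trace theorem on the $C^2$ domain $\Omega$, the maps $\Gv\mapsto(\restriction{\Gv}{\Gamma_{\rm obs}},\restriction{\frac{\partial\Gv}{\partial\Gn}}{\Gamma_{\rm obs}})$ and $p\mapsto\restriction{p}{\Gamma_{\rm obs}}$ send $\GHH^2(\Omega)$ and $\HH^1(\Omega)$ continuously into $\GHH^{3/2}(\Gamma_{\rm obs})\times\GHH^{1/2}(\Gamma_{\rm obs})$ and $\HH^{1/2}(\Gamma_{\rm obs})$, so the data obey $\norm{\Gv}_{\GHH^{3/2}(\Gamma_{\rm obs})}+\norm{\frac{\partial\Gv}{\partial\Gn}}_{\GHH^{1/2}(\Gamma_{\rm obs})}\le C\norm{\Gv}_{\GHH^2(\Omega)}$ and $\norm{p}_{\HH^{1/2}(\Gamma_{\rm obs})}\le C\norm{p}_{\HH^1(\Omega)}$. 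Then I would obtain $(\Tilde\Gv,\Tilde p)$ on $D$ as the image of these data under a fixed bounded right inverse of the Cauchy trace operator relative to $\Gamma_{\rm obs}$, that is, a continuous linear lifting from $\GHH^{3/2}(\Gamma_{\rm obs})\times\GHH^{1/2}(\Gamma_{\rm obs})\times\HH^{1/2}(\Gamma_{\rm obs})$ into $\GHH^2(D)\times\HH^1(D)$ attaining the prescribed value and normal derivative of the velocity and value of the pressure on $\Gamma_{\rm obs}$. Such an inverse is classical: after flattening $\Gamma_{\rm obs}$ by $C^2$ charts one lifts the data into the collar by the standard frequency-graded construction coupling the tangential frequency to the normal distance (as in the usual proof of surjectivity of the trace map), and \emph{not} by a naive Hermite formula $g_0+t\,g_1$, since $g_0\in\GHH^{3/2}$ alone does not keep the tangential second derivatives in $\LL^2(D)$. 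Its operator norm yields directly the bound \eqref{estExt4TraceOseen}.

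Gluing $(\Gv,p)$ and $(\Tilde\Gv,\Tilde p)$ then produces $(\Tilde\Gv,\Tilde p)\in\GHH^2(\Tilde\Omega)\times\HH^1(\Tilde\Omega)$ extending $(\Gv,p)$ and satisfying the three boundary identities; the lifting bound is exactly \eqref{estExt4TraceOseen}, and combining it with the trace bounds of the previous step (together with $\restriction{\Tilde\Gv}{\Omega}=\Gv$, $\restriction{\Tilde p}{\Omega}=p$) gives \eqref{estExt4Oseen}. The hard part will be the behaviour near the edge $\partial\Gamma_{\rm obs}$, where $\Gamma_{\rm obs}$ meets $\partial\Tilde\Omega$ and $D$ fails to be smooth: one must check both that the lifted field lies in $\GHH^2(D)$ globally (not merely away from the edge) and that the glued field is genuinely $\GHH^2$ across the hypersurface-with-boundary $\Gamma_{\rm obs}$. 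This is controlled by working with the restriction trace spaces on $\Gamma_{\rm obs}$ (rather than the $\HH^{1/2}_{00}$-type spaces), for which the data extend boundedly across $\partial\Gamma_{\rm obs}$, and by the $C^2$ regularity of both $\partial\Omega$ and $\partial\Tilde\Omega$, which makes the collar estimate uniform up to the edge; taking the cap $D$ thin (i.e. $\Tilde\Omega$ close to $\Omega$) keeps all constants purely geometric.
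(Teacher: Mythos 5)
Your proposal is correct in substance and rests on the same two pillars as the paper's proof, namely the transmission characterization of piecewise $\GHH^2\times\HH^1$ functions across $\Gamma_{\rm obs}$ (value and normal derivative of the velocity, value of the pressure) and a bounded right inverse of the Cauchy-trace map; but the implementation of the lifting is genuinely different. You construct the lifting directly on the cap $D=\priv{\Tilde\Omega}{\Omega}$ by a chart-by-chart, frequency-graded collar construction, which forces you to confront the degeneracy of the collar of $\Gamma_{\rm obs}$ near the edge $\partial\Gamma_{\rm obs}$ where $D$ pinches; you rightly flag this as the hard part but only sketch its resolution. The paper sidesteps the edge entirely: it applies Grisvard's right inverse $R$ to lift the data $(\Gv,\partial_{\Gn}\Gv,p)$ on $\Gamma_{\rm obs}$ back \emph{into the smooth domain $\Omega$} as a pair $(\Gw,q)\in\GHH^2(\Omega)\times\HH^1(\Omega)$, then pushes $(\Gw,q)$ to all of $\Rr^N$ with Stein's extension operator $S$ and restricts to $\Tilde\Omega$; since $S(\Gw,q)$ is globally $\GHH^2\times\HH^1$ on $\Rr^N$, its traces from the exterior of $\Omega$ automatically coincide with those of $(\Gv,p)$ on $\Gamma_{\rm obs}$, the gluing is immediate, and the bound \eqref{estExt4TraceOseen} is just the operator norm of $T\circ S\circ R$. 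Your route buys an explicit picture of the lifting and avoids invoking Stein's theorem; the paper's buys a construction in which every operator acts on a $C^2$ domain or on $\Rr^N$, so no analysis at $\partial\Gamma_{\rm obs}$ is ever needed. If you keep your version, the one step that must actually be written out is the uniformity of the collar lifting up to the edge; the simplest fix is to perform the lifting in a full one-sided tubular neighbourhood of $\partial\Omega$ in $\Rr^N\backslash\overline{\Omega}$ and restrict to $D$, which is morally what the paper does.
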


\begin{proof}
We consider the linear continuous trace-right inverse operator (see for example~\cite[Theorem~1.5.1.2]{Grisvard})
$$
\fonction{R}
{{\bf H}^{3/2}(\Gamma_{\rm obs})\times {\bf H}^{1/2}(\Gamma_{\rm obs})\times \HH^{1/2}(\Gamma_{\rm obs})}
{{\bf H}^2(\Omega)\times \HH^1(\Omega)}
{(\Gg_{\rm obs},\Gh_{\rm obs}, k_{\rm obs})}
{(\Gw, q)}
$$
with $(\Gw,\frac{\partial \Gw}{\partial {\bf n}},q)=(\Gg_{\rm obs},\Gh_{\rm obs}, k_{\rm obs} )$ on $\Gamma_{\rm obs}$. Then, let us denote by $S$ the linear continuous extension operator given by Stein's theorem (see~\cite{Adams}):
$$
\fonction{S}
{{\bf H}^{2}(\Omega)\times \HH^{1}(\Omega)}
{{\bf H}^2(\Rr^N)\times \HH^1(\Rr^N)}
{(\Gw,q)}
{(\GW, Q)} .
$$
We also denote by $T$ the linear continuous operator of restriction to $\tilde \Omega$:
$$
\fonction{T}
{{\bf H}^{2}(\Rr^N)\times \HH^{1}(\Rr^N)}
{{\bf H}^2(\tilde \Omega)\times \HH^1(\tilde \Omega)}
{(\GW,Q)}
{(\restriction{\GW}{\tilde \Omega}, \restriction{Q}{\tilde \Omega})} .
$$
Finally, by denoting $(\tilde \Gw, \tilde q) \ov T \circ S \circ R \, (\Gv,\partial_{\nn}\Gv,p) $, we conclude by defining
$$
\tilde \Gv \ov
\{ \begin{array}{rl}
\Gv & \mbox{ in } \Omega \\
\tilde \Gw & \mbox{ in } \priv{\tilde \Omega}{\Omega}
\end{array} \right.
\qquad \mbox{ and } \qquad
\qquad
\tilde q \ov
\{ \begin{array}{rl}
p & \mbox{ in } \Omega \\
\tilde q & \mbox{ in } \priv{\tilde \Omega}{\Omega} .
\end{array} \right.
$$
It is easily checked that $(\tilde \Gv, \tilde p) \in \GHH^2(\tilde \Omega) \times \HH^1(\tilde \Omega)$.
\end{proof}

\begin{proof}[Proof of Theorem~\ref{ThmStabOseen01}]
In what follows, $\tilde \Gz_{1}$, $\tilde \Gz_{2}$ denote some continuous extensions to $\mathbb{R}^N$ of $\Gz_{1}$, $\Gz_{2}$, for the $\GLL^{\infty}$ and the $\GWW^{1,r}$  norm respectively.
Let us consider the extensions $\tilde \Omega$ and $(\tilde \Gv, \tilde q) \in \GHH^2(\tilde \Omega) \times \HH^1(\tilde \Omega)$ given by Lemma~\ref{lemmaExtension}. 
Let us consider $\omega\Subset  \Tilde\Omega\backslash\overline{\Omega}$ a non empty bounded open subset. 
We summarize these notations in Figure~\ref{FigNotationDomains}. 
\begin{figure}[h] 
\centering
\begin{tikzpicture}[scale=2.0]
%
\fill[whiteblue] (2,1) .. controls (2.5,1) and (3,1.3) .. (3.4,0.7) 
		  .. controls (3.7,0.3) and (3.3,-0.2) .. (2.5,-0.6) ;
\fill[whiteblue] (0,0) .. controls (1,1) and (1.9,1) .. (2,1) 
		  .. controls (2.5,1) and (3.1,-0.3) .. (2.5,-0.6)
		  .. controls (2.35,-0.7) and (1,-1.0) .. (0.5,-1)
		  .. controls (0,-1) and (-1,-1) .. (0,0) ;
\coordinate [label={[lightblue2]right:$\Tilde\Omega$}] (OmegaTilde) at (-0.15,0.25);
%
\draw (0,0) .. controls (1,1) and (1.9,1) .. (2,1) 
		  .. controls (2.5,1) and (3.1,-0.3) .. (2.5,-0.6) 
		  .. controls (2.35,-0.7) and (1,-1.0) .. (0.5,-1) 
		  .. controls (0,-1) and (-1,-1) .. (0,0);
\coordinate [label=right:$\Omega$] (Omega) at (0.5,-0.7);
%
\draw (2,1) .. controls (2.5,1) and (3,1.3) .. (3.4,0.7) 
		  .. controls (3.7,0.3) and (3.3,-0.2) .. (2.5,-0.6) ;
%
\draw[lightbrown,line width=1.1pt] (2.97,0.5) circle (0.15cm) node {$\boldsymbol{\omega}$ } ;
%
\draw (2,1) .. controls (2.5,1) and (3.1,-0.3) .. (2.5,-0.6) ;
\draw[red,line width=1.1pt,dotted] (2,1) .. controls (2.5,1) and (3.1,-0.3) .. (2.5,-0.6) ;
\coordinate [label={[red]right:$\Gamma_{\rm obs}$}] (Gammaobs) at (2.1,0.4);
\end{tikzpicture}
\caption{Notations}\label{FigNotationDomains}
\end{figure}

Next, we apply \eqref{CarlEst3OseenBIS} and \eqref{CarlEst3OseenPressionBIS} to $(\Tilde\Gv, \Tilde p)$ and, with $\widehat\lambda\leq \lambda$, we get:
\begin{multline*}
 \int_{\Tilde\Omega} \( s^2\lambda ^2e^{2\lambda \psi} |\Tilde \Gv|^2 
  +s e^{\lambda \psi} |{\rm curl}\, \Tilde \Gv|^2 
  + s  e^{\lambda \psi} | \Tilde  p- \mydiv \Tilde \Gv |^2 
 \) e^{2 s e^{\lambda \psi}} {\rm d}x \\
\leq C \left(\int_{\Tilde \Omega} (|- \nu \Delta \Tilde \Gv + \( \Tilde \Gz_{1} \cdot \nabla \) \Tilde \Gv   + \( \Tilde \Gv \cdot \nabla \) \Tilde \Gz_{2}  + \nabla \Tilde p   |^2+| \nabla {\rm div}\,\Tilde \Gv|^2) e^{2 s e^{\lambda \psi}}{\rm d}x \right. \\
\left. +\int_{\omega} (  s^3\lambda ^2 e^{3\lambda \psi} |\Tilde \Gv|^2 
   + s  e^{\lambda \psi} | \Tilde  p- \mydiv \Tilde \Gv |^2 
  ) e^{2 s e^{\lambda \psi}}{\rm d}x 
  + e^{2 s e^{\lambda c_0}} \left(\|\Gv\|_{{\bf H}^2(\Tilde\Omega)}^2+\| p \|_{\HH^1(\Tilde \Omega)}^2\right)\right),
\end{multline*}
and from \eqref{estExt4TraceOseen}, \eqref{estExt4Oseen} and  $\omega\subset  \Tilde\Omega\backslash \overline{\Omega}$ we deduce 
\begin{multline*}
 \int_{\Omega} \( s^2\lambda ^2e^{2\lambda \psi} | \Gv|^2 
  +s e^{\lambda \psi} |{\rm curl}\,  \Gv|^2 
  + s  e^{\lambda \psi} |   p- \mydiv  \Gv |^2 
 \) e^{2 s e^{\lambda \psi}} {\rm d}x \\
\begin{array}{rcl}
  &\leq & \dis C \Bigg(\int_{\Omega}  \(  \abs{ \nabla d}^2 +  |\Gf|^2 \) e^{2 s e^{\lambda \psi}}{\rm d}x
  +e^{3\lambda \psi_{\max}} e^{2 s e^{\lambda \psi_{\max}}} \int_{\Tilde\Omega\backslash \overline{\Omega}}  \bigg(s^3\lambda ^2|\Tilde \Gv|^2  \\
& & \qquad    + s  | \Tilde  p- \mydiv \Tilde \Gv |^2   
 +|\Delta \Tilde \Gv|^2  +  \abs{\( \Tilde \Gz_{1} \cdot \nabla \) \Tilde \Gv}^2 + \abs{\( \Tilde \Gv \cdot \nabla \) \Tilde \Gz_{2}}^2 + |\nabla \Tilde p|^2 + |\nabla {\rm div}\,\Tilde \Gv|^2 \bigg) {\rm d}x \\
& & \qquad +e^{2 s e^{\lambda c_{0} }}  \left(\|\Gv\|_{{\bf H}^2(\Omega)}^2+\|p\|_{\HH^1(\Omega)}^2\right)\Bigg)\\
 &\leq & \dis C \left(s^3\lambda^2 e^{3\lambda \psi_{\max}} e^{2 s e^{\lambda \psi_{\max}}} \mathfrak{m}(\Gz_1,\Gz_2)^2 \left(\|\Gv\|_{{\bf H}^{3/2}(\Gamma_{\rm obs})}^2+\left\|\frac{\partial \Gv}{\partial {\bf n}}\right\|_{{\bf H}^{1/2}(\Gamma_{\rm obs})}^2  + \|p\|_{{\bf H}^{1/2}(\Gamma_{\rm obs})}^2\right )\right. \\ 
& & \dis \left.\qquad \int_{\Omega}  \(  \abs{\nabla d}^2 + |\Gf|^2 \) e^{2 s e^{\lambda \psi}}{\rm d}x
 +e^{2 s e^{\lambda c_0}} \left(\|\Gv\|_{{\bf H}^2(\Omega)}^2+\|p\|_{\HH^1(\Omega)}^2\right)\right).
\end{array}
\end{multline*}
Here, we have used the notation $\psi_{\max}\ov \max_{x\in  \Tilde\Omega}\psi(x)$.
Thus, by dividing the above inequality by $e^{2 s e^{\lambda c_0}}$ and using  that 
$$e^{-2 s e^{\lambda c_0}}\int_{\Omega} e^{2\lambda \psi} |\Gv|^2 e^{2 s e^{\lambda \psi}}{\rm d}x\geq \int_{\Omega}  |\Gv|^2 {\rm d}x$$ 
and that $\lambda \geq \mathfrak{m}(\Gz_1,\Gz_2) \widehat c$, we obtain for some $c^*>0$ large enough (independent on $\lambda$),
\BM\label{ineq14-12-2014-1Oseen}
\|\Gv\|_{{\bf L}^2(\Omega)}^2\leq 
e^{s  e^{c^*\lambda}  } \left(\|\Gf\|_{{\bf L}^{2}(\Omega)}^2 + \norm{d}^2_{\HH^1(\Omega)}+\|\Gv\|_{{\bf H}^{3/2}(\Gamma_{\rm obs})}^2+\left\|\frac{\partial \Gv}{\partial {\bf n}}\right\|_{{\bf H}^{1/2}(\Gamma_{\rm obs})}^2+\|p\|_{{\HH}^{1/2}(\Gamma_{\rm obs})}^2\right) \\
 +\frac{1}{s^2}\left(\|\Gv\|_{{\bf H}^2(\Omega)}^2+\|p\|_{\HH^1(\Omega)}^2\right) .
\end{multline}
With a similar argument, 
\BM\label{ineq14-12-2014-1OseenRotPression}
\norm{{\rm curl} \, \Gv}_{\(\LL^2(\Omega)\)^{2N-3}}^2 + \| p - \mydiv \Gv \|_{\LL^2(\Omega)}^2 \leq 
e^{ s e^{c^* \lambda} } \left(\|\Gf\|_{{\bf L}^{2}(\Omega)}^2 + \norm{d}^2_{\HH^1(\Omega)}+\|\Gv\|_{{\bf H}^{3/2}(\Gamma_{\rm obs})}^2 \right. \\
  \left. +\left\|\frac{\partial \Gv}{\partial {\bf n}}\right\|_{{\bf H}^{1/2}(\Gamma_{\rm obs})}^2+\|p\|_{{\HH}^{1/2}(\Gamma_{\rm obs})}^2\right) 
 +\frac{1}{s }\left(\|\Gv\|_{{\bf H}^2(\Omega)}^2+\|p\|_{\HH^1(\Omega)}^2\right) .
\end{multline}

Now, to conclude, it remains to replace the term $\left\|\frac{\partial \Gv}{\partial {\bf n}}\right\|_{{\bf H}^{1/2}(\Gamma_{\rm obs})}^2+\|p\|_{{\HH}^{1/2}(\Gamma_{\rm obs})}^2$ by $\left\|\sigma(\Gv,p){\bf n}\right\|_{{\bf H}^{1/2}(\Gamma_{\rm obs})}^2$. First, from 
$$
d={\rm div}\, \Gv=\frac{\partial \Gv}{\partial {\bf n}}\cdot {\bf n} +{\rm div}_\tau \,\Gv_\tau +({\rm div}\,{\bf n}) (\Gv \cdot \Gn)\quad \mbox{ on }\quad \Gamma_{\rm obs} ,
$$
we deduce that 
\begin{equation}\label{ineq14-12-2014-2Oseen}
\left\|\frac{\partial \Gv}{\partial {\bf n}}\cdot {\bf n}\right\|_{\HH^{1/2}(\Gamma_{\rm obs})}^2 
\leq C \( \left\|\Gv\right\|_{{\bf H}^{3/2}(\Gamma_{\rm obs})}^2 + \norm{d}^2_{\HH^1(\Omega)} \).
\end{equation}
The above inequality with the following computations
\begin{eqnarray*}
\nu \frac{\partial \Gv}{\partial {\bf n}}&= & \sigma(\Gv,p){\bf n}+ p{\bf n}-\nu\, ^t\nabla \Gv \Gn\\
&= & \sigma(\Gv,p){\bf n}+ p{\bf n}-\nu \nabla (\Gv\cdot {\bf n}) + \nu(\nabla {\bf n}) \,  \Gv\\
&= & \sigma(\Gv,p){\bf n}+ p{\bf n}-\nu\(\frac{\partial \Gv}{\partial {\bf n}}\cdot {\bf n}\) \Gn-\nu \nabla_\tau (\Gv\cdot {\bf n}) + \nu(\nabla {\bf n}) \, \Gv,
\end{eqnarray*}
yields
$$
\left\|\frac{\partial \Gv}{\partial {\bf n}}\right\|_{{\bf H}^{1/2}(\Gamma_{\rm obs})}^2
\leq C\left(\left\|\sigma(\Gv,p){\bf n}\right\|_{{\bf H}^{1/2}(\Gamma_{\rm obs})}^2 + \left\|p\right\|_{\HH^{1/2}(\Gamma_{\rm obs})}^2 + \left\|\Gv\right\|_{{\bf H}^{3/2}(\Gamma_{\rm obs})}^2 + \norm{d}^2_{\HH^1(\Omega)} \right) .
$$
Finally, from $p= 2 \nu \frac{\partial \Gv}{\partial {\bf n}}\cdot {\bf n}-\sigma(\Gv,p){\bf n}\cdot {\bf n}$ and \eqref{ineq14-12-2014-2Oseen} we deduce that
$$
\left\|p\right\|_{\HH^{1/2}(\Gamma_{\rm obs})}^2
\leq C\left(\left\|\sigma(\Gv,p){\bf n}\right\|_{{\bf H}^{1/2}(\Gamma_{\rm obs})}^2 + \left\|\Gv\right\|_{{\bf H}^{3/2}(\Gamma_{\rm obs})}^2 + \norm{d}^2_{\HH^1(\Omega)} \right)
$$
and then
\BE \label{DerniereEquationProofStab}
\left\|\frac{\partial \Gv}{\partial {\bf n}}\right\|_{{\bf H}^{1/2}(\Gamma_{\rm obs})}^2 + \left\|p\right\|_{\HH^{1/2}(\Gamma_{\rm obs})}^2
\leq C\left(\left\|\sigma(\Gv,p){\bf n}\right\|_{{\bf H}^{1/2}(\Gamma_{\rm obs})}^2 + \left\|\Gv\right\|_{{\bf H}^{3/2}(\Gamma_{\rm obs})}^2 + \norm{d}^2_{\HH^1(\Omega)} \right) .
\EE
Then, \eqref{ineq14-12-2014-3Oseen} and \eqref{EqThmStab01} follow from~\eqref{ineq14-12-2014-1Oseen}, \eqref{ineq14-12-2014-1OseenRotPression} and \eqref{DerniereEquationProofStab}.
\end{proof}


\subsection{ Proof of the stability estimates for the Navier-Stokes equations} \label{sectionStabStokesNS}

Theorem~\ref{ThmStabNS} is a simple consequence of Theorem~\ref{StabilityThmOseen} applied to the pair~$(\Gv \, , \, p)\ov (\Gz_{1}-\Gz_{2} \, , \,  \pi_{1}-\pi_{2})$ which is solution of:
\begin{equation*}
\left\lbrace 
\begin{array}{rcll}
- \nu \Delta \Gv +  \( \Gz_{1} \cdot \nabla \) \Gv + \( \Gv \cdot \nabla \) \Gz_{2}    + \nabla p    & = & \0   & \mbox{ in }   \Omega   , \\
 {\rm div}\,  \Gv & = & 0   & \mbox{ in } \Omega , \\
\Gv & = & \Gz_{1} - \Gz_{2}   & \mbox{ on } \Gamma_{\rm obs}, \\
\sigma(\Gv,p){\bf n} & = & \sigma(\Gz_1,\pi_1){\bf n} - \sigma(\Gz_2,\pi_2){\bf n}   & \mbox{ on } \Gamma_{\rm obs} .
\end{array}
\right.
\end{equation*}

Note that in the same way, we can also obtain the same estimates as in Theorem \ref{ThmStabOseen01DISTRIBHolder00} and Theorem \ref{StabilityThmOseenDISTRIB} for a distributed observation.


\section{Application: stability estimates for boundary coefficients inverse problems} \label{sectionInvPbs}

In the present section, we focus on the proof of Theorem~\ref{ThmStabReconCoeff}. 
We begin by considering the Navier boundary conditions. One can first notice that the pair $(\Gv \, , \, p) \ov (\Gz_{1}-\Gz_{2} \, , \,  \pi_{1}-\pi_{2})$ satisfies
\begin{equation}  \label{EqStabNavier}
\left\lbrace 
\begin{array}{rcll}
- \nu \Delta \Gv +   \( \Gz_{1} \cdot \nabla \) \Gv  +   \( \Gv \cdot \nabla \) \Gz_{2}  + \nabla p    & = & \0   & \mbox{in }   \Omega  ,  \\
 {\rm div}\,  \Gv & = & 0   & \mbox{in } \Omega , \\
 \Gv \cdot \Gn & = & 0   & \mbox{on } \Gamma_0 ,\\
2 \nu \[\MD(\Gv){\bf n}\]_{\tau} + \alpha_{1} \Gz_{1} - \alpha_{2} \Gz_{2} & = & \0   & \mbox{on } \Gamma_0 .
\end{array}
\right.
\end{equation}
Without loss of generality, we can assume that $\abs{\Gz_{1}} \geq m$ on $\MK$. Then, since $(\alpha_{2} - \alpha_{1}) \Gz_{1} = \alpha_{2} \Gv + 2 \nu \[ \MD(\Gv) \Gn \]_{\tau}$ on $\Gamma_{0}$, we have
\begin{equation}\label{est1Diffalpai}
\norm{\alpha_{1} - \alpha_{2}}_{\LL^2(\MK)}   
\leq \frac{C}{m} \( \norm{\Gv}_{\GLL^2(\Gamma_0)} + \norm{\nabla \Gv}_{\LL^2(\Gamma_0)} \) .
\end{equation}
To estimate the above right hand side, we use the following inequalities:
\begin{equation}\label{IneqInterp}
\norm{\Gv}_{\GLL^2(\Gamma_0)} \leq C \norm{\Gv}^{1/2}_{\GLL^2(\Omega)} \norm{\Gv}^{1/2}_{\GHH^1(\Omega)}
\quad \mbox{ and } \quad
\norm{\nabla \Gv}_{\LL^2(\Gamma_0)} \leq C \norm{\Gv}^{1/2}_{\GHH^1(\Omega)} \norm{\Gv}^{1/2}_{\GHH^2(\Omega)}.
\end{equation}
Note that the above inequalities are an immediate consequence of the interpolation inequality $\norm{\cdot}^2_{\LL^2(\partial\Omega)} \leq C \norm{\cdot}_{\HH^1(\Omega)}\norm{\cdot}_{\LL^2(\Omega)}$ which can be obtained for instance by first applying ~\cite[Theorem~1.5.1.10]{Grisvard} to get $C>0$ such that for all $u\in \HH^1(\partial\Omega)$ and all $0<\varepsilon <1$,
$$\norm{u}^2_{\LL^2(\partial\Omega)} \leq C \( \varepsilon^{1/2} \norm{\nabla u}^2_{\GLL^2(\Omega)} + \varepsilon^{-1/2} \norm{u}^2_{\LL^2(\Omega)}  \),$$
and next by taking 
$\dis \varepsilon = \norm{u}^2_{\LL^2(\Omega)}/\norm{u}^2_{\HH^1(\Omega)}$.
Combining the interpolation inequality $\norm{\Gv}_{\GHH^1(\Omega)} \leq \norm{\Gv}_{\GLL^2(\Omega)}^{1/2} \norm{\Gv}_{\GHH^2(\Omega)}^{1/2}$ with the second inequality in \eqref{IneqInterp}, we deduce that
$$
\norm{\nabla \Gv}_{\LL^2(\Gamma_0)} \leq C \norm{\Gv}^{1/4}_{\GLL^2(\Omega)} \norm{\Gv}^{3/4}_{\GHH^2(\Omega)} .
$$
Hence, from \eqref{est1Diffalpai} we obtain:
\BE \label{ProofThmStabNavier}
\norm{\alpha_{1} - \alpha_{2}}_{\LL^2(\MK)}   
\leq \frac{C}{m} \norm{\Gv}^{1/4}_{\GLL^2(\Omega)} \norm{\Gv}^{3/4}_{\GHH^2(\Omega)}
\EE
and we conclude using the estimate on $\norm{\Gv}_{\GLL^2(\Omega)}$ given by Theorem~\ref{ThmStabNS}.

For the Robin boundary conditions, we proceed in exactly the same way to obtain
$$
\norm{\alpha_{1} - \alpha_{2}}_{\LL^2(\MK)}   
\leq \frac{C}{m} \( \norm{\Gv}_{\GLL^2(\Gamma_0)} + \norm{\nabla \Gv}_{\LL^2(\Gamma_0)} + \norm{p}_{\LL^2(\Gamma_0)} \) 
$$
and conclude using the estimate on $\norm{\Gv}_{\GLL^2(\Omega)}$ and on $\norm{p}_{\LL^2(\Omega)}$ given by Theorem~\ref{ThmStabNS}
\begin{remark}\label{EstHk}
We can obtain a better estimate assuming more regularity on $(\Gv,p)$. For example, if $(\Gv,p) \in \GHH^{k}(\Omega) \times \HH^{k-1}(\Omega)$, $k\geq 2$, we can use an interpolation inequality in~\eqref{ProofThmStabNavier} to obtain
\begin{equation}\label{StaInHk}
\norm{\alpha_{1} - \alpha_{2}}_{\LL^2(\mathcal{K})}   
\leq \frac{C}{m} \norm{\Gv}^{1/4}_{\GLL^2(\Omega)} \(\norm{\Gv}^{1-2/k}_{\GLL^2(\Omega)}\norm{\Gv}^{2/k}_{\GHH^k(\Omega)}\)^{3/4} = \frac{C}{m} \norm{\Gv}^{1-3/(2k)}_{\GLL^2(\Omega)} \norm{\Gv}^{3/(2k)}_{\GHH^k(\Omega)}.
\end{equation}
Then \eqref{stabInHk} follows from \eqref{StaInHk} with the interpolation inequality $\norm{\cdot}_{\HH^{\theta n}(\mathcal{K})} \leq C \norm{\cdot}_{\LL^2(\mathcal{K})}^{1-\theta} \norm{\cdot}_{\HH^n(\mathcal{K})}^{\theta}$.
\end{remark}

\begin{remark}
Concerning the Navier boundary conditions, we can obtain the same result in a different way, by writing $[\MD(\Gv)\Gn ]_{\tau} $ in terms of $\curl \Gv$ on $\Gamma_{0}$. Indeed, since $\Gv \cdot \Gn = 0$ on~$\Gamma_{0}$, $\dis \nabla(\Gv \cdot \Gn) = \frac{\partial(\Gv \cdot \Gn)}{\partial\Gn} \Gn$ and then,
\begin{multline*}
\curl \Gv \times \Gn
= (\nabla \Gv - {}^t\nabla \Gv) \Gn 
= \frac{\partial\Gv}{\partial\Gn} - \nabla (\Gv \cdot \Gn) + (\nabla \Gn) \Gv 
\\
= \frac{\partial (\Gv_{\tau} + (\Gv \cdot \Gn)\Gn) }{\partial\Gn} - \frac{\partial(\Gv \cdot \Gn)}{\partial\Gn}\Gn + (\nabla \Gn) \Gv_{\tau}
= \frac{\partial\Gv_{\tau} }{\partial\Gn}  +  (\nabla \Gn) \Gv_{\tau} .
\end{multline*}
On the other hand, using the same kind of computations, we have
\begin{multline*}
2 \MD(\Gv) \Gn = (\nabla \Gv + {}^t \nabla \Gv) \Gn
= \frac{\partial\Gv}{\partial\Gn} + \nabla (\Gv \cdot \Gn) - (\nabla \Gn) \Gv 
= \frac{\partial\Gv_{\tau} }{\partial\Gn}  + 2 \frac{\partial(\Gv \cdot \Gn)}{\partial\Gn}\Gn - (\nabla \Gn) \Gv_{\tau} .
\end{multline*}
Hence, we obtain that 
$$
\curl \Gv \times \Gn = \[ 2 \MD(\Gv) \Gn \]_{\tau} + 2 (\nabla \Gn) \Gv_{\tau} .
$$
Thus, in the previous proof, we can write
$$
(\alpha_{2} - \alpha_{1}) \Gv_{1} = \alpha_{2} \Gv + 2 \nu \[ \MD(\Gv) \Gn \]_{\tau} = \alpha_{2} \Gv + \nu \, \curl \Gv \times \Gn -  2 \nu (\nabla \Gn) \Gv_{\tau}
$$
and conclude using the estimates~\eqref{ineqStabNS} and~\eqref{ineqStabNS02} on $\norm{\Gv}_{\GLL^2(\Omega)}$ and $\norm{\curl \Gv}_{\(\LL^2(\Omega)\)^{2N-3}}$ given by Theorem~\ref{ThmStabNS}.
\end{remark}


\section{Application to error estimates} \label{sectionErrorEsti}

In this section, we consider the reconstruction of $(\Gv ,p)$, solution of the Stokes system in $\Omega$, knowing $\Gv $ and $\sigma(\Gv ,p)  \Gn$ on $\Gamma_{\rm obs}$. In other words, we consider the data completion problem for the Stokes system, that is: from given data $\Gg_D\in \GHH^{3/2}(\Gamma_{\rm obs})$ and $\Gg_N\in \GHH^{1/2}(\Gamma_{\rm obs})$, reconstruct $(\Gv,p)  \in \GHH^2(\Omega)\times \HH^1(\Omega)$ verifying
\BE \label{ErrorEstimStokesPb}
\left\lbrace
\begin{array}{rclcl}
-\nu \Delta \Gv  + \nabla p& =& \Gf & & \mbox{ in } \Omega , \\
 {\rm div}\ \Gv& = &0  & & \mbox{ in } \Omega ,\\
 \Gv& = &\Gg_D  & & \mbox{ on } \Gamma_{\rm obs} , \\
 \sigma(\Gv,p) \Gn &=& \Gg_N  & & \mbox{ on } \Gamma_{\rm obs} .
 \end{array}
\right.
\end{equation}
As the problem is ill-posed, it is mandatory to use  a stabilization method to stably reconstruct $(\Gv,p)$ from the data $\Gf$, $\Gg_D$ and $\Gg_N$. 

Such a stabilization method usually depends on a parameter of regularization $\varepsilon>0$, and it
 must fulfill the two following requirements: it must have a solution for \emph{any data}~$\Gf$,~$\Gg_D$ and~$\Gg_N$, regardless of the existence of a solution to the corresponding Stokes problem~\eqref{ErrorEstimStokesPb}. And its solution should converge to
the solution of~\eqref{ErrorEstimStokesPb} when the parameter $\varepsilon$ goes to zero, when such a solution exists.

We study below two standard methods of regularization: a \emph{quasi-reversibility method}
and a \emph{penalized Kohn-Vogelius method}. In particular, we obtained the convergence rates of these methods directly from 
the estimates obtained previously.

In the following, we denote 
$
\Big( (\Gv,p),(\Gw,q) \Big)_{\GHH^2(\Omega) \times \HH^1(\Omega)} \ov   (\Gv,\Gw)_{\GHH^2(\Omega)} + (p,q)_{\HH^1(\Omega)}
$
which is obviously a scalar product on the Hilbert space $\GHH^2(\Omega) \times \HH^1(\Omega)$, and 
$\Vert (\Gv,p) \Vert_{\GHH^2(\Omega) \times \HH^1(\Omega)}$ the corresponding norm.


\subsection{Error estimates for the quasi-reversibility method}

The quasi-reversibility method has been introduced in \cite{LionsLattes}  by Latt\`es \textit{et al.} to stabilize elliptic, parabolic and hyperbolic ill-posed problem. The main idea of the method is to solve well-posed variational fourth-order problem, depending on~$\varepsilon$.

More precisely, for $\varepsilon>0$, we define the following quasi-reversibility variational problem: find $(\Gv_\varepsilon,p_{\varepsilon}) \in \GHH^2(\Omega) \times \HH^1(\Omega)$ such that $\Gv_\varepsilon = \Gg_D$ on $\Gamma_{\rm obs}$, $\sigma(\Gv_\varepsilon,p_\varepsilon) \Gn = \Gg_N$
on $\Gamma_{\rm obs}$ and for all $(\Gw,q) \in \GHH^2(\Omega) \times \HH^1(\Omega)$ with $\Gw  = \0 $ and $\sigma(\Gw,q)\Gn = \0$ on $\Gamma_{\rm obs}$, we have:
\begin{multline} \label{MethodeQuasiRevers2}
 \int_\Omega  (-\nu \Delta \Gv_\varepsilon + \nabla p_\varepsilon)\cdot (-\nu \Delta \Gw + \nabla q) \, \dd x  
 +  \Big({\rm div}(\Gv_\varepsilon), {\rm div}(\Gw) \Big)_{\HH^1(\Omega)} \\
 + \varepsilon (\Gv_\varepsilon,\Gw)_{\GHH^2(\Omega)} 
  + \varepsilon (p_\varepsilon,q)_{\HH^1(\Omega)} = 
\int_\Omega \Gf \cdot (-\nu \Delta \Gw + \nabla q)\, \dd x.
\end{multline}

We start by proving that the quasi-reversibility problem is well-posed:

\begin{prop} \label{prop_QR}
For any $(\Gf,\Gg_D,\Gg_N) \in \GLL^2(\Omega) \times \GHH^{3/2}(\Gamma_{\rm obs}) \times \GHH^{1/2}(\Gamma_{\rm obs})$, there exists a unique solution $(\Gv_\varepsilon,p_\varepsilon) \in \GHH^2(\Omega) \times \HH^1(\Omega)$ to the quasi-reversibility problem~\eqref{MethodeQuasiRevers2}.
\end{prop}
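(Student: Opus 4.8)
The plan is to rewrite the quasi-reversibility formulation \eqref{MethodeQuasiRevers2} as a continuous and coercive variational equation on a closed subspace, and then to apply the Lax--Milgram theorem, the coercivity being provided precisely by the two penalization terms carrying the factor $\varepsilon$. First I would introduce the test space
$$
V \ov \{ (\Gw,q) \in \GHH^2(\Omega)\times\HH^1(\Omega) \, : \, \Gw = \0 \mbox{ and } \sigma(\Gw,q)\Gn = \0 \mbox{ on } \Gamma_{\rm obs} \},
$$
which is the kernel of the continuous trace map $(\Gw,q)\mapsto (\restriction{\Gw}{\Gamma_{\rm obs}},\restriction{\sigma(\Gw,q)\Gn}{\Gamma_{\rm obs}})$ from $\GHH^2(\Omega)\times\HH^1(\Omega)$ into $\GHH^{3/2}(\Gamma_{\rm obs})\times\GHH^{1/2}(\Gamma_{\rm obs})$, hence a closed subspace and a Hilbert space for the induced inner product. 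Granting for a moment a lift $(\Gv_0,p_0)\in\GHH^2(\Omega)\times\HH^1(\Omega)$ of the boundary data, namely $\Gv_0=\Gg_D$ and $\sigma(\Gv_0,p_0)\Gn=\Gg_N$ on $\Gamma_{\rm obs}$, I would seek $(\Gv_\varepsilon,p_\varepsilon)$ in the form $(\Gv_0,p_0)+(\tilde\Gv,\tilde p)$ with $(\tilde\Gv,\tilde p)\in V$. Denoting by $a$ the bilinear form on the left-hand side of \eqref{MethodeQuasiRevers2}, the problem then amounts to finding $(\tilde\Gv,\tilde p)\in V$ such that $a((\tilde\Gv,\tilde p),(\Gw,q))=\ell(\Gw,q)$ for all $(\Gw,q)\in V$, where $\ell(\Gw,q)\ov\int_\Omega\Gf\cdot(-\nu\Delta\Gw+\nabla q)\,\dd x-a((\Gv_0,p_0),(\Gw,q))$.

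Next I would verify the hypotheses of Lax--Milgram on $V$. Continuity of $a$ is routine, since the first term is bounded by $\norm{-\nu\Delta\Gv+\nabla p}_{\GLL^2(\Omega)}\norm{-\nu\Delta\Gw+\nabla q}_{\GLL^2(\Omega)}$, the second by $\norm{\mydiv\Gv}_{\HH^1(\Omega)}\norm{\mydiv\Gw}_{\HH^1(\Omega)}$, and the remaining two by the product norms, each factor being controlled by $\norm{(\cdot,\cdot)}_{\GHH^2(\Omega)\times\HH^1(\Omega)}$; continuity of $\ell$ follows from $\Gf\in\GLL^2(\Omega)$ together with the continuity of $a$. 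The decisive point is coercivity, which is immediate: for every $(\Gv,p)\in\GHH^2(\Omega)\times\HH^1(\Omega)$,
$$
a((\Gv,p),(\Gv,p))=\norm{-\nu\Delta\Gv+\nabla p}_{\GLL^2(\Omega)}^2+\norm{\mydiv\Gv}_{\HH^1(\Omega)}^2+\varepsilon\norm{\Gv}_{\GHH^2(\Omega)}^2+\varepsilon\norm{p}_{\HH^1(\Omega)}^2\geq\varepsilon\norm{(\Gv,p)}_{\GHH^2(\Omega)\times\HH^1(\Omega)}^2,
$$
simply by discarding the first two nonnegative terms. Lax--Milgram then provides a unique $(\tilde\Gv,\tilde p)\in V$, hence a unique $(\Gv_\varepsilon,p_\varepsilon)=(\Gv_0,p_0)+(\tilde\Gv,\tilde p)$; uniqueness is transparent since the difference of two solutions lies in $V$ and is annihilated by the coercive form.

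The only genuinely delicate ingredient, and the one I expect to require care, is the construction of the lift $(\Gv_0,p_0)$, that is the surjectivity onto $\GHH^{3/2}(\Gamma_{\rm obs})\times\GHH^{1/2}(\Gamma_{\rm obs})$ of the combined trace-and-stress operator. I would handle it exactly as in the proof of Theorem~\ref{ThmStabOseen01}: the algebraic relations between $\frac{\partial\Gv}{\partial\Gn}$, $p$, $\sigma(\Gv,p)\Gn$, $\Gv$ and the tangential derivatives on $\Gamma_{\rm obs}$, which lead to \eqref{DerniereEquationProofStab}, allow one to recover from $(\Gg_D,\Gg_N)$ the corresponding normal-derivative and pressure traces $\Gh\in\GHH^{1/2}(\Gamma_{\rm obs})$ and $k\in\HH^{1/2}(\Gamma_{\rm obs})$. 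Applying to $(\Gg_D,\Gh,k)$ the trace-right-inverse operator $R$ of Lemma~\ref{lemmaExtension} then yields $(\Gv_0,p_0)\in\GHH^2(\Omega)\times\HH^1(\Omega)$ with $\Gv_0=\Gg_D$, $\frac{\partial\Gv_0}{\partial\Gn}=\Gh$ and $p_0=k$ on $\Gamma_{\rm obs}$, so that $\sigma(\Gv_0,p_0)\Gn=\Gg_N$ there, which closes the argument.
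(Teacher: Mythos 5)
Your proposal is correct and follows essentially the same route as the paper: lift the Cauchy data to a pair $(\Gv_0,p_0)\in\GHH^2(\Omega)\times\HH^1(\Omega)$ using the algebraic decomposition of $\sigma(\cdot,\cdot)\Gn$ into normal-derivative, pressure and tangential-derivative contributions, reduce to a homogeneous problem on the closed test space, and conclude by Lax--Milgram with coercivity supplied by the $\varepsilon$-penalization terms. The only cosmetic difference is that the paper fixes the pressure trace to $0$ and prescribes $\nu\,\partial_{\Gn}\GV$ explicitly, whereas you phrase the lift as inverting the trace relations (which are underdetermined, so a choice such as $k=0$ must still be made); this does not affect the argument.
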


\begin{proof}
Let us first note that there exists $(\GV,P) \in \GHH^2(\Omega) \times \HH^1(\Omega)$ such that $\GV_{\vert\Gamma_{\rm obs}} = \Gg_D$,
$\sigma(\GV,P) \Gn_{\vert \Gamma_{\rm obs}} = \Gg_N$ and 
$$
\Vert (\GV, P) \Vert_{\GHH^2(\Omega)\times\HH^1(\Omega)} \leq C \Vert (\Gg_D, \Gg_N) \Vert_{\GHH^{3/2}(\Gamma_{\rm obs})\times \GHH^{1/2}(\Gamma_{\rm obs})}.
$$
Indeed, since $\sigma(\GV,P)\Gn\cdot \Gn=2\nu \frac{\partial\GV}{\partial\Gn}\cdot \Gn-P$ and
$$
[\sigma(\GV,P)\Gn]_\tau=\nu\left( \frac{\partial\GV_\tau}{\partial\Gn} +\nabla_\tau (\GV\cdot \Gn)- (\nabla\Gn) \GV_\tau\right)
= \nu\left( \frac{\partial\GV_\tau}{\partial\Gn} + (\nabla_\tau \GV ) \Gn\right),
$$
it suffices to choose $P=0$ and a continuous lifting $\GV$ which satisfies $\GV = \Gg_D$ and
$\nu \frac{\partial\GV}{\partial\Gn} = \frac{1}{2}(\Gg_N\cdot \Gn)\Gn+{\Gg_{N}}_{\tau} - \nu (\nabla_\tau   \Gg_{D}) \Gn$ on $\Gamma_{\rm obs}$.

Defining $(\tilde{\Gv}_\varepsilon   \, , \, \tilde{p}_\varepsilon ) \ov ( \Gv_\varepsilon -  \GV  \, ,  \, p_\varepsilon - P)$, we see that $\tilde{\Gv}_\varepsilon = \0$ and $ \sigma(\tilde{\Gv_\varepsilon},\tilde{p_\varepsilon}) \Gn= \0$ on $\Gamma_{\rm obs}$ and, for all $(\Gw,q) \in \GHH^2(\Omega) \times \HH^1(\Omega)$ such that $\Gw = \0$ and $ \sigma(\Gw,q)\Gn = \0$ on $\Gamma_{\rm obs}$, we have
\begin{multline*}
\int_\Omega  (-\nu \Delta \tilde{\Gv}_\varepsilon + \nabla \tilde{p}_\varepsilon)\cdot (-\nu \Delta \Gw + \nabla q)\, \dd x  + \Big( {\rm div}(\tilde{\Gv}_\varepsilon), {\rm div}(\Gw) \Big)_{\HH^1(\Omega)} \\
 + \varepsilon (\tilde{\Gv}_\varepsilon,\Gw)_{\GHH^2(\Omega)}
 + \varepsilon (\tilde{p}_\varepsilon,q)_{\HH^1(\Omega)}  = 
\int_\Omega \tilde{\Gf} \cdot(-\nu \Delta \Gw + \nabla q)\, \dd x - \int_\Omega  {\rm div}(\GV)\, {\rm div}(\Gw)  \, \dd x \\
 - \varepsilon (\GV,\Gw)_{\GHH^2(\Omega)}
 - \varepsilon (P,q)_{\HH^1(\Omega)} ,
\end{multline*}
where $\tilde{\Gf} \ov  \Gf + \nu \Delta \GV - \nabla P$. The Lax-Milgram theorem 
gives then the result.
\end{proof}

Suppose now that the initial data completion problem admits a (necessarily unique) solution $(\Gv,p) \in \GHH^2(\Omega) \times \HH^1(\Omega)$. 
Then, we have the following

\begin{theorem} \label{thm_QR}
The solution $(\Gv_\varepsilon,p_\varepsilon)\in \GHH^2(\Omega) \times \HH^1(\Omega)$ of the quasi-reversibility problem~\eqref{MethodeQuasiRevers2} converges to $(\Gv,p)\in \GHH^2(\Omega) \times \HH^1(\Omega)$ solution of the data completion problem for the Stokes problem~\eqref{ErrorEstimStokesPb} when $\varepsilon$ tends to zero, strongly in $\GHH^2(\Omega) \times \HH^1(\Omega)$.  We furthermore have the estimate
\begin{equation} \label{QR_4}
\Vert -\nu \Delta \Gv_\varepsilon + \nabla p_\varepsilon - \Gf \Vert^2_{\GLL^2(\Omega)} 
+ \Vert {\rm div}(\Gv_\varepsilon) \Vert^2_{\HH^1(\Omega)} 
\leq \varepsilon \Vert (\Gv,p)\Vert_{\GHH^2(\Omega) \times \HH^1(\Omega)}^2.
\end{equation}
\end{theorem}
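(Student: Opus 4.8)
The plan is to exploit the fact that the exact solution $(\Gv,p)$ and the quasi-reversibility solution $(\Gv_\varepsilon,p_\varepsilon)$ carry \emph{the same} Cauchy data $(\Gg_D,\Gg_N)$ on $\Gamma_{\rm obs}$, so that their difference is an admissible test function in \eqref{MethodeQuasiRevers2}; this turns the variational formulation into an energy identity from which both the residual bound \eqref{QR_4} and the convergence will follow. First I would record that, because $-\nu\Delta\Gv+\nabla p=\Gf$ and ${\rm div}\,\Gv=0$ in $\Omega$, the exact solution satisfies, for every $(\Gw,q)\in\GHH^2(\Omega)\times\HH^1(\Omega)$ with $\Gw=\0$ and $\sigma(\Gw,q)\Gn=\0$ on $\Gamma_{\rm obs}$,
\begin{equation*}
\int_\Omega(-\nu\Delta\Gv+\nabla p)\cdot(-\nu\Delta\Gw+\nabla q)\,\dd x + \Big({\rm div}(\Gv),{\rm div}(\Gw)\Big)_{\HH^1(\Omega)} = \int_\Omega \Gf\cdot(-\nu\Delta\Gw+\nabla q)\,\dd x .
\end{equation*}
Subtracting this from \eqref{MethodeQuasiRevers2} yields an error equation for $(E_{\Gv},E_p)\ov(\Gv_\varepsilon-\Gv,p_\varepsilon-p)$, in which the only surviving terms besides the bilinear form are the two penalization contributions $\varepsilon(\Gv_\varepsilon,\Gw)_{\GHH^2(\Omega)}+\varepsilon(p_\varepsilon,q)_{\HH^1(\Omega)}$.

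Since $(E_{\Gv},E_p)$ vanishes together with $\sigma(E_{\Gv},E_p)\Gn$ on $\Gamma_{\rm obs}$, it is itself admissible, so I would test the error equation with $(\Gw,q)=(E_{\Gv},E_p)$ and split $(\Gv_\varepsilon,p_\varepsilon)=(\Gv,p)+(E_{\Gv},E_p)$ in the penalization terms to obtain the identity
\begin{equation*}
\norm{-\nu\Delta E_{\Gv}+\nabla E_p}_{\GLL^2(\Omega)}^2 + \norm{{\rm div}\,E_{\Gv}}_{\HH^1(\Omega)}^2 + \varepsilon\norm{(E_{\Gv},E_p)}_{\GHH^2(\Omega)\times\HH^1(\Omega)}^2 = -\varepsilon\Big((\Gv,p),(E_{\Gv},E_p)\Big)_{\GHH^2(\Omega)\times\HH^1(\Omega)} .
\end{equation*}
Observing that $-\nu\Delta E_{\Gv}+\nabla E_p=-\nu\Delta\Gv_\varepsilon+\nabla p_\varepsilon-\Gf$ and ${\rm div}\,E_{\Gv}={\rm div}\,\Gv_\varepsilon$, a Cauchy--Schwarz bound on the right-hand side followed by Young's inequality, absorbing $\tfrac{\varepsilon}{2}\norm{(E_{\Gv},E_p)}^2$ into the left, immediately delivers the residual estimate \eqref{QR_4} as well as the $\varepsilon$-uniform a priori bound $\norm{(E_{\Gv},E_p)}_{\GHH^2(\Omega)\times\HH^1(\Omega)}\le\norm{(\Gv,p)}_{\GHH^2(\Omega)\times\HH^1(\Omega)}$.

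For the strong convergence I would argue by weak compactness. The uniform bound lets me extract a subsequence with $(E_{\Gv},E_p)\rightharpoonup(\Gw^*,q^*)$ weakly in $\GHH^2(\Omega)\times\HH^1(\Omega)$. By \eqref{QR_4} the residuals $-\nu\Delta\Gv_\varepsilon+\nabla p_\varepsilon-\Gf$ and ${\rm div}\,\Gv_\varepsilon$ tend to $\0$ strongly; since the linear maps $(\Gw,q)\mapsto-\nu\Delta\Gw+\nabla q$ and $\Gw\mapsto{\rm div}\,\Gw$ and the trace operators onto $\Gamma_{\rm obs}$ are continuous, hence weakly continuous, passing to the limit shows that $(\Gw^*,q^*)$ solves the homogeneous Stokes system with $\Gw^*=\0$ and $\sigma(\Gw^*,q^*)\Gn=\0$ on $\Gamma_{\rm obs}$. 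The uniqueness part of Fabre and Lebeau's theorem (\cite{FabLeb96}, equivalently the quantification \eqref{ineqStabOseen}--\eqref{ineqStabOseen02}) then forces $(\Gw^*,q^*)=(\0,0)$; as the limit does not depend on the subsequence, the whole family converges weakly to zero. Finally, dividing the energy identity by $\varepsilon$ and dropping the nonnegative residual terms gives $\norm{(E_{\Gv},E_p)}^2\le\big|\big((\Gv,p),(E_{\Gv},E_p)\big)_{\GHH^2(\Omega)\times\HH^1(\Omega)}\big|$, whose right-hand side tends to $0$ by the weak convergence just obtained, yielding $(E_{\Gv},E_p)\to(\0,0)$ strongly. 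The single genuinely nontrivial ingredient is the Fabre--Lebeau uniqueness used to identify the weak limit; the remainder is a Lax--Milgram-type energy computation.
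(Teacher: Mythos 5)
Your proof is correct and follows essentially the same route as the paper: testing \eqref{MethodeQuasiRevers2} with the admissible difference $(\Gv_\varepsilon-\Gv,p_\varepsilon-p)$ to obtain the energy identity, deducing \eqref{QR_4} together with the uniform bound $\norm{(\Gv_\varepsilon-\Gv,p_\varepsilon-p)}_{\GHH^2(\Omega)\times\HH^1(\Omega)}\le\norm{(\Gv,p)}_{\GHH^2(\Omega)\times\HH^1(\Omega)}$, and then combining weak compactness with uniqueness for the data completion problem to identify the weak limit. The only cosmetic differences are that you use Young's inequality where the paper argues in two steps via \eqref{QR_2} and \eqref{QR_3}, and that you conclude strong convergence directly from $\norm{(E_{\Gv},E_p)}^2\le\abs{\big((\Gv,p),(E_{\Gv},E_p)\big)_{\GHH^2(\Omega)\times\HH^1(\Omega)}}\to 0$ rather than through the paper's contradiction argument based on norm convergence.
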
 

\begin{proof}
Using $(\Gw,q) \ov  (\Gv_\varepsilon - \Gv \,   p_\varepsilon - p)$ as test functions in the quasi-reversibility problem~\eqref{MethodeQuasiRevers2}, which is admissible as they verify
the boundary conditions, we directly obtain
\begin{equation} \label{QR_1}
\Vert -\nu \Delta \Gv_\varepsilon + \nabla p_\varepsilon - \Gf \Vert^2_{\GLL^2(\Omega)} 
+ \Vert {\rm div}(\Gv_\varepsilon) \Vert^2_{\HH^1(\Omega)} + \varepsilon \Big( (\Gv_\varepsilon,p_\varepsilon), (\Gv_\varepsilon - \Gv, p_\varepsilon - p)  \Big)_{\GHH^2(\Omega) \times \HH^1(\Omega)}= 0 .
\end{equation}
 We hence  have $\Big( (\Gv_\varepsilon,p_\varepsilon), (\Gv_\varepsilon - \Gv, p_\varepsilon - p)  \Big)_{\GHH^2(\Omega) \times \HH^1(\Omega)} \leq 0$ which implies
\begin{equation} \label{QR_2}
\Vert (\Gv_\varepsilon,p_\varepsilon) \Vert_{\GHH^2(\Omega) \times \HH^1(\Omega)} \leq 
\Vert (\Gv,p) \Vert_{\GHH^2(\Omega) \times \HH^1(\Omega)}.
\end{equation}
Subtracting $\varepsilon \Big( (\Gv,p), (\Gv_\varepsilon - \Gv, p_\varepsilon - p) \Big)_{\GHH^2(\Omega)\times \HH^1(\Omega)}$ to equation~\eqref{QR_1}, we obtain
\begin{equation} \label{QR_2bis}
\Vert (\Gv_\varepsilon - \Gv,p_\varepsilon - p) \Vert_{\GHH^2(\Omega)\times \HH^1(\Omega)}^2  
\leq  - \Big( (\Gv,p), (\Gv_\varepsilon - \Gv, p_\varepsilon - p) \Big)_{\GHH^2(\Omega)\times \HH^1(\Omega)}
\end{equation}
implying
\begin{equation} \label{QR_3}
\Vert (\Gv_\varepsilon - \Gv,p_\varepsilon - p) \Vert_{\GHH^2(\Omega) \times \HH^1(\Omega)} \leq \Vert (\Gv,p) \Vert_{\GHH^2(\Omega) \times \HH^1(\Omega)}.
\end{equation}
Going back to equation~\eqref{QR_1}, we finally obtain
$$
\Vert -\nu \Delta \Gv_\varepsilon + \nabla p_\varepsilon - \Gf \Vert^2_{\GLL^2(\Omega)} 
+ \Vert {\rm div}(\Gv_\varepsilon) \Vert^2_{\HH^1(\Omega)} \leq \varepsilon \left\vert 
\Big( (\Gv_\varepsilon,p_\varepsilon), (\Gv_\varepsilon - \Gv, p_\varepsilon - p)  \Big)_{\GHH^2(\Omega) \times \HH^1(\Omega)} \right\vert
$$
which, using~\eqref{QR_2} and~\eqref{QR_3}, directly leads to the estimate~\eqref{QR_4}.

Now, suppose that $\Gv_\varepsilon$ and $p_\varepsilon$ do not converge to $\Gv$ and $p$. Then there exist~$\rho >0$ and~$\varepsilon_n$, sequence
of strictly positive real numbers verifying $\varepsilon_n \xrightarrow[n\rightarrow \infty]{} 0$, such that the couple $(\Gv_n  \ov  \Gv_{\varepsilon_n} \, , p_n \ov  p_{\varepsilon_n})$ satisfies 
$$
\Vert \Gv_n - \Gv, p_n - p\Vert_{\GHH^2(\Omega)\times \HH^1(\Omega)} > \rho.
$$
By equation~\eqref{QR_2}, we know that $(\Gv_n,p_n)$ is a bounded sequence in $\GHH^2(\Omega)\times \HH^1(\Omega)$. Hence, up to a subsequence
(that we still denote $(\Gv_n, p_n)$) the sequence converges to some $(\Gw, q)$ weakly in $\GHH^2(\Omega) \times \HH^1(\Omega)$. Then equation~\eqref{QR_4} and the boundary conditions verified by $(\Gv_n,p_n)$ directly imply that $(\Gw,q)$ verifies the Stokes data completion problem~\eqref{ErrorEstimStokesPb}, which in turn implies by uniqueness of such solution that $\Gw = \Gv$ and $q = p$. Therefore, $\Gv_n$ weakly converges to $\Gv$ in $\GHH^2(\Omega)$ and
$p_n$ weakly converges to $p$ in $\HH^1(\Omega)$. But Equation~\eqref{QR_2} implies then that $(\Gv_n,p_n)$ strongly converges
to $(\Gv,p)$, which is a direct contradiction with the definition of the sequence, and therefore ends the proof.
\end{proof}

\begin{remark}
It is not difficult to obtain the following complement to the theorem: if the initial data completion problem for the Stokes system does not admit a solution, then
$$
\Vert (\Gv_\varepsilon ,p_\varepsilon) \Vert_{\GHH^2(\Omega) \times \HH^1(\Omega)} \xrightarrow[\varepsilon \rightarrow 0]{} + \infty.
$$
Otherwise, we would have a sequence of strictly positive real numbers $(\varepsilon_n)_{n\in \mathbb{N}}$ verifying $\varepsilon_n \xrightarrow[n \rightarrow \infty]{} 0$ and $\Vert (\Gv_{\varepsilon_n} ,p_{\varepsilon_n}) \Vert_{\GHH^2(\Omega) \times \HH^1(\Omega)} \leq C $.
But using the same arguments as in the last paragraph of the proof of theorem \ref{thm_QR}, extracting a subsequence and passing to the limit, we would
obtain a solution to the data completion problem for the Stokes system, in obvious contradiction with the assumption.
\end{remark}

Proposition \ref{prop_QR} and Theorem \ref{thm_QR} clearly show that the proposed quasi-reversibility method~\eqref{MethodeQuasiRevers2} is a regularization method for problem~\eqref{ErrorEstimStokesPb}. However, if Theorem~\ref{thm_QR} assures the convergence of the approximated solution to the exact one, it does not give
any rate of convergence. Actually, it is known (see \cite[section~2.5]{Klibanov} and the references therein) that Carleman estimates are the key argument to derive convergence rates for the quasi-reversibility method. 
This is  the case for the quasi-reversibility method proposed above and we now prove Theorem~\ref{ThmRateCv} for this method:

\begin{proof}[Proof of Theorem~\ref{ThmRateCv} for the quasi-reversibility method]
Defining $(\Gu  \, , \, q ) \ov (  \Gv_\varepsilon - \Gv \, , \,  p_\varepsilon - p)$, we notice that we have $ \Gu = \0 $ and  $\sigma(\Gu,q) \Gn = \0$ on $\Gamma_{\rm obs}$ and that the following estimates hold (see Inequalities~\eqref{QR_4} and~\eqref{QR_3}):
\BEAN
 \Vert (\Gu, q) \Vert_{\GHH^2(\Omega) \times \HH^1(\Omega)} 
 &\leq& \Vert (\Gv , p) \Vert_{\GHH^2(\Omega) \times \HH^1(\Omega)} \\
\Vert - \nu \Delta \Gu + \nabla q \Vert_{\GLL^2(\Omega)}
&\leq& \sqrt{\varepsilon} \Vert (\Gv , p) \Vert_{\GHH^2(\Omega) \times \HH^1(\Omega)} \\
 \Vert {\rm div} (\Gu) \Vert_{\LL^2(\Omega)} 
&\leq& \sqrt{\varepsilon} \Vert (\Gv , p) \Vert_{\GHH^2(\Omega) \times \HH^1(\Omega)} . 
\EEAN
Hence, applying estimates~\eqref{ineqStabOseen} and \eqref{ineqStabOseen02}, we directly obtain the result.
\end{proof}

\begin{remark}
Suppose that instead of exact data $(\Gf,\Gg_D,\Gg_N) \in \GLL^2(\Omega_{\rm obs}) \times\GHH^{3/2}(\Gamma_{\rm obs}) \times \GHH^{1/2}(\Gamma_{\rm obs})$, with corresponding solution
$(\Gv, p) \in \GHH^2(\Omega) \times \HH^1(\Omega)$, we have noisy data $(\Gf^\delta,\Gg_D^\delta, \Gg_n^\delta)\in  \GLL^2(\Omega) \times\GHH^{3/2}(\Gamma_{\rm obs}) \times \GHH^{1/2}(\Gamma_{\rm obs})$, such that
$$
\Vert \Gf^\delta - \Gf \Vert_{\GLL^2(\Omega)} \leq \delta,\quad \Vert \Gg_D^\delta - \Gg_D \Vert_{\GHH^{3/2}(\Gamma_{\rm obs})} \leq \delta
\quad \mbox{and} \quad \Vert \Gg_N^\delta - \Gg_N \Vert_{\GHH^{1/2}(\Gamma_{\rm obs})} \leq \delta.
$$
Due to the ill-posedness of the data completion problem~\eqref{ErrorEstimStokesPb}, there might be no solution corresponding to this noisy data. However, the quasi-reversibility
problem~\eqref{MethodeQuasiRevers2} has a corresponding solution, denoted $\Gv_\varepsilon^\delta$ and $p_\varepsilon^\delta$. We also denote $\Gv_\varepsilon$ 
and $p_\varepsilon$ the solution of the quasi-reversibility problem with exact data. It is not difficult to verify that there exists a constant
$C>0$, depending only on the geometry of the domain, such that
$$
\Vert (\Gv_\varepsilon^\delta - \Gv_\varepsilon, p_\varepsilon^\delta - p_\varepsilon) \Vert_{\GHH^2(\Omega) \times \HH^1(\Omega)}
\leq C \frac{\delta}{\sqrt{\varepsilon}}.
$$
Combining this result with the previous estimates, we therefore obtain
$$
 \Vert (\Gv_\varepsilon^\delta  - \Gv , p_\varepsilon^\delta - p) \Vert_{\GHH^1(\Omega) \times \LL^2(\Omega)} 
\leq   C \( \frac{\delta}{\sqrt{\varepsilon}} + \frac{M}{\(\ln(1 + \frac{M}{ \sqrt{\varepsilon} })\)^{1/2}} \) ,
$$
where $M>0$ is such that  $ \|\Gv\|_{{\bf H}^2(\Omega)}+\|p\|_{\HH^{1}(\Omega)} \leq M$. Such estimate highlights the competition between regularization and noise, which leads to the question 
of the optimal choice of the regularization parameter $\varepsilon$ with respect to the amplitude of the noise $\delta$.
On this subject of the optimal choice of the regularization parameter for the quasi-reversibility method for elliptic equations,
see \cite{Bourgeois,Cao} and the references therein. 
\end{remark}


\subsection{Error estimates for the Kohn-Vogelius method}

The quasi-reversibility method proposed in the previous section regularizes the data completion problem for the Stokes system by solving approximately the  first two equations  of~\eqref{ErrorEstimStokesPb} (see
the estimate in Theorem~\ref{thm_QR}) while verifying exactly the boundary conditions. The Kohn-Vogelius method we study now is somehow a symmetric method, in the sense that it solves exactly the equations in $\Omega$ with approximated boundary conditions. 
And again, we obtain the rate of convergence of the method using the same estimates~\eqref{ineqStabOseen} and~\eqref{ineqStabOseen02}.

We recall that $\Gamma_{\rm obs}^C \ov  \partial \Omega \setminus \overline{\Gamma_{\rm obs}}$ and that we here assume that $\overline{\Gamma_{\rm obs}} \cap \overline{\Gamma_{\rm obs}^C} = \emptyset$. For $\Gvarphi_N \in \GHH^{1/2} (\Gamma_{\rm obs}^C)$ and  $\Gpsi_D \in \GHH^{3/2} (\Gamma_{\rm obs}^C)$, we denote $(\Gv_{\Gvarphi_N},p_{\Gvarphi_N}) \in \GHH^2(\Omega) \times \HH^1(\Omega)$
and $(\Gv_{\Gpsi_D},p_{\psi_D}) \in \GHH^2(\Omega) \times \HH^1(\Omega)$ the respective solutions of 
\begin{equation} \label{PbStokesKV}
\left\lbrace
\begin{array}{rcll}
\!\!-\nu \Delta \Gv_{\Gvarphi_N} + \nabla p_{\varphi_N}& =& \Gf & \! \mbox{ in } \Omega , \\
 {\rm div}\ \Gv_{\Gvarphi_N}& = &0 &\! \mbox{ in } \Omega , \\
 \Gv_{\Gvarphi_N} & = &\Gg_{D} &\! \mbox{ on } \Gamma_{\rm obs} , \\
 \sigma(\Gv_{\Gvarphi_N},p_{\varphi_N}) \Gn &=& \Gvarphi_N &\! \mbox{ on } \Gamma_{\rm obs}^C ,
 \end{array}
\right.
\mbox{and }
\left\lbrace
\begin{array}{rcll}
\!\!-\nu \Delta \Gv_{\Gpsi_D} + \nabla p_{\psi_D} & =& \Gf &\! \mbox{ in } \Omega , \\
 {\rm div}\ \Gv_{\Gpsi_D}& = &0 &\! \mbox{ in } \Omega , \\
 \sigma(\Gv_{\Gpsi_D},p_{\psi_D}) \Gn &=& \Gg_{N}  & \!\mbox{ on } \Gamma_{\rm obs} , \\
 \Gv_{\Gpsi_D} & = &\Gpsi_D  & \! \mbox{ on } \Gamma_{\rm obs}^C .
 \end{array}
\right. 
\end{equation}
We define the non-negative functional 
$$
F \, : \,  
(\Gvarphi_N,\Gpsi_D) \in \GHH^{1/2}(\Gamma_{\rm obs}^C) \times \GHH^{3/2}(\Gamma_{\rm obs}^C) 
\mapsto
\vert \Gv_{\Gvarphi_N} - \Gv_{\Gpsi_D} \vert_{\GHH^2(\Omega)}^2 
+ \vert \Gv_{\Gvarphi_N} - \Gv_{\Gpsi_D} \vert_{\GHH^1(\Omega)}^2 
\in \Rr,
$$
 where $\vert \cdot \vert_{\GHH^1(\Omega)} \ov \norm{\nabla(\cdot)}_{\GLL^2(\Omega)}$ and $\vert \cdot \vert_{\GHH^2(\Omega)} \ov \norm{\nabla^2(\cdot)}_{\GLL^2(\Omega)}$ are the respective $\GHH^1$ and~$\GHH^2$-seminorm.

\begin{remark}
For this Kohn-Vogelius method, we have to impose $\overline{\Gamma_{\rm obs}} \cap \overline{\Gamma_{\rm obs}^C} = \emptyset$ in order to guarantee that the functional is well-defined and more precisely that the pairs $(\Gv_{\Gvarphi_N},p_{\Gvarphi_N}) $ and $(\Gv_{\Gpsi_D},p_{\psi_D})$ belong to $\GHH^2(\Omega) \times \HH^1(\Omega)$ for all $(\Gvarphi_N,\Gpsi_D) \in \GHH^{1/2}(\Gamma_{\rm obs}^C) \times \GHH^{3/2}(\Gamma_{\rm obs}^C) $ . 
In the case  $\overline{\Gamma_{\rm obs}} \cap \overline{\Gamma_{\rm obs}^C} \neq \emptyset$, one cannot guarantee that the solutions belong to $\GHH^2(\Omega) \times \HH^1(\Omega)$ (see for example~\cite{Sav97}).
\end{remark}

It is not difficult to verify that the two following propositions are equivalent:
\begin{itemize}
\item there exists $(\Gvarphi_N,\Gpsi_D) \in \GHH^{1/2}(\Gamma_{\rm obs}^C) \times \GHH^{3/2}(\Gamma_{\rm obs}^C)$ such that $F(\Gvarphi_N,\Gpsi_D) = 0$; 
\item there exists a (necessarily unique) solution to the data completion problem~\eqref{ErrorEstimStokesPb}. 
\end{itemize}
Hence one could try to reconstruct the solution of problem~\eqref{ErrorEstimStokesPb} by minimizing $F$. However, this is not a stable strategy:
indeed, the infimum of $F$ is always $0$ even if \eqref{ErrorEstimStokesPb} does not admit a solution, but in this case there are minimizing sequences $(\Gvarphi_{N}^m ,\Gpsi_{D}^m )$ such that
\begin{equation}\label{MinseqInfty}
\lim_{m \rightarrow \infty} \Vert (\Gvarphi_{N}^m , \Gpsi_{D}^m)  \Vert_{\GHH^{1/2}(\Gamma_{\rm obs}^C) \times \GHH^{3/2}(\Gamma_{\rm obs}^C)} = + \infty.
\end{equation}
Let us briefly explain why. Due to the denseness of the admissible data
(see~\cite{Belgacem} or \cite[section~2]{Darde}), for all $\varepsilon>0$, there exists $\( \Gf_{\varepsilon} , \Gg_{D_{\varepsilon}}, \Gg_{N_{\varepsilon}} \) \in \GLL^2(\Omega) \times \GHH^{3/2}(\Gamma_{\rm obs}) \times \GHH^{1/2}(\Gamma_{\rm obs})$ such that the corresponding Stokes problem~\eqref{ErrorEstimStokesPb} has a solution $(\Gv_{\varepsilon},p_{\varepsilon})$ and
$$
\norm{\Gf_{\varepsilon} - \Gf} \leq \varepsilon \, , \quad \norm{\Gg_{D_{\varepsilon}} - \Gg_{D}} \leq \varepsilon \; \mbox{ and } \quad \norm{\Gg_{N_{\varepsilon}} - \Gg_{N}} \leq \varepsilon  .
$$
Therefore, choosing $\Gvarphi_{N_{\varepsilon}} = \sigma(\Gv_{\varepsilon},p_{\varepsilon}) \Gn$ and $\Gpsi_{D_{\varepsilon}} = \Gv_{\varepsilon}$ on $\Gamma_{\rm obs}^C$, it is not difficult to see that 
\begin{equation} \label{MinF0}
 0 \leq F(\Gvarphi_{N_{\varepsilon}} , \Gpsi_{D_{\varepsilon}}) \leq C \varepsilon .
\end{equation}
 Hence, the infimum of $F$ is $0$. Furthermore, if the above sequence $(\Gvarphi_{N_{\varepsilon}} , \Gpsi_{D_{\varepsilon}})$ is bounded in $\GHH^{1/2}(\Gamma_{\rm obs}^C) \times \GHH^{3/2}(\Gamma_{\rm obs}^C)$, one can extract a weakly convergent subsequence which leads to the existence of a solution of the Cauchy problem~\eqref{ErrorEstimStokesPb} using Problems~\eqref{PbStokesKV} and Inequality~\eqref{MinF0}.

Thus, to regularize the problem, we add a penalization term: for $\varepsilon>0$, we introduce the functional $F_\varepsilon \, : \,  \GHH^{1/2}(\Gamma_{\rm obs}^C) \times \GHH^{3/2}(\Gamma_{\rm obs}^C) \rightarrow \Rr$ defined by
$$
F_\varepsilon  (\Gvarphi_N,\Gpsi_D) = 
 F(\Gvarphi_N,\Gpsi_D) + \varepsilon \Vert (\Gv_{\Gvarphi_N},p_{\Gvarphi_N}) \Vert_{\GHH^2(\Omega) \times \HH^1(\Omega)}^2 + \varepsilon \Vert (\Gv_{\Gpsi_D},p_{\Gpsi_D}) \Vert_{\GHH^2(\Omega) \times \HH^1(\Omega)}^2 .
$$
We have the following result:

\begin{prop} \label{prop_KV}
For any $(\Gf,\Gg_D,\Gg_N) \in \GLL^2(\Omega) \times \GHH^{3/2}(\Gamma_{\rm obs}) \times \GHH^{1/2}(\Gamma_{\rm obs})$, there exists a unique 
$(\Gvarphi_{N}^\varepsilon  ,\Gpsi_{D}^\varepsilon  ) \in \GHH^{1/2}(\Gamma_{\rm obs}^C) \times \GHH^{3/2}(\Gamma_{\rm obs}^C)$ such that 
$$
F_\varepsilon(\Gvarphi_{N}^\varepsilon  ,\Gpsi_{D}^\varepsilon  )  = \min_{(\Gvarphi_N,\Gpsi_D) \in \GHH^{1/2}(\Gamma_{\rm obs}^C) \times \GHH^{3/2}(\Gamma_{\rm obs}^C)} F_\varepsilon(\Gvarphi_N,\Gpsi_D) .
$$
\end{prop}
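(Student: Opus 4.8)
The plan is to recognize $F_\varepsilon$ as a continuous, coercive, strictly convex quadratic functional on the Hilbert space $\mathcal{H} \ov \GHH^{1/2}(\Gamma_{\rm obs}^C) \times \GHH^{3/2}(\Gamma_{\rm obs}^C)$, and then to invoke the Lax-Milgram theorem applied to its Euler-Lagrange equation, exactly as was done for the quasi-reversibility method in Proposition~\ref{prop_QR}.

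First I would record the affine structure of the data-to-solution maps. Since $\overline{\Gamma_{\rm obs}} \cap \overline{\Gamma_{\rm obs}^C} = \emptyset$, each of the two mixed Stokes problems in~\eqref{PbStokesKV} is well-posed with $\GHH^2(\Omega) \times \HH^1(\Omega)$ regularity, so the maps $\Gvarphi_N \mapsto (\Gv_{\Gvarphi_N},p_{\Gvarphi_N})$ and $\Gpsi_D \mapsto (\Gv_{\Gpsi_D},p_{\Gpsi_D})$ are affine and continuous from $\mathcal{H}$ into $\GHH^2(\Omega) \times \HH^1(\Omega)$. Writing each map as its linear part $(\Gv^0_{\Gvarphi_N},p^0_{\Gvarphi_N})$, $(\Gv^0_{\Gpsi_D},p^0_{\Gpsi_D})$ (obtained by setting $\Gf = \0$, $\Gg_D = \0$, $\Gg_N = \0$) plus a fixed element, the functional takes the form $F_\varepsilon(U) = a(U,U) - 2\ell(U) + c$ for $U = (\Gvarphi_N,\Gpsi_D)$, where $a$ is a symmetric bilinear form, $\ell$ a linear form, and $c \geq 0$; continuity of $a$ and of $\ell$ follows from continuity of the solution maps together with continuity of the seminorms $|\cdot|_{\GHH^2(\Omega)}$, $|\cdot|_{\GHH^1(\Omega)}$ and of the $\GHH^2(\Omega) \times \HH^1(\Omega)$-norm.

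The decisive point is the coercivity of $a$. Dropping the nonnegative Kohn-Vogelius part $F$, the quadratic form controls the penalized norms of the linear parts, $a(U,U) \geq \varepsilon \bigl( \Vert (\Gv^0_{\Gvarphi_N},p^0_{\Gvarphi_N}) \Vert_{\GHH^2(\Omega) \times \HH^1(\Omega)}^2 + \Vert (\Gv^0_{\Gpsi_D},p^0_{\Gpsi_D}) \Vert_{\GHH^2(\Omega) \times \HH^1(\Omega)}^2 \bigr)$. Now $\Gvarphi_N$ is exactly the trace $\sigma(\Gv^0_{\Gvarphi_N},p^0_{\Gvarphi_N})\Gn$ on $\Gamma_{\rm obs}^C$ and $\Gpsi_D$ is exactly the trace of $\Gv^0_{\Gpsi_D}$ on $\Gamma_{\rm obs}^C$; hence, by continuity of these trace operators from $\GHH^2(\Omega) \times \HH^1(\Omega)$ into $\GHH^{1/2}(\Gamma_{\rm obs}^C)$ and $\GHH^{3/2}(\Gamma_{\rm obs}^C)$ respectively, one gets $\Vert \Gvarphi_N \Vert_{\GHH^{1/2}(\Gamma_{\rm obs}^C)}^2 + \Vert \Gpsi_D \Vert_{\GHH^{3/2}(\Gamma_{\rm obs}^C)}^2 \leq C\, a(U,U)/\varepsilon$, which is precisely the coercivity estimate $a(U,U) \geq c_\varepsilon \Vert U \Vert_{\mathcal{H}}^2$ with $c_\varepsilon = \varepsilon/C$.

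With continuity, symmetry and coercivity of $a$ and continuity of $\ell$ established, the Lax-Milgram theorem yields a unique $U^\varepsilon = (\Gvarphi_N^\varepsilon,\Gpsi_D^\varepsilon) \in \mathcal{H}$ solving $a(U^\varepsilon, V) = \ell(V)$ for all $V \in \mathcal{H}$, which is the Euler-Lagrange equation of the strictly convex functional $F_\varepsilon$ and hence characterizes its unique global minimizer, giving the claim. I expect the only genuinely delicate step to be this coercivity estimate, since it requires recovering the full boundary-data norms on $\Gamma_{\rm obs}^C$ from the penalized interior norms; this is exactly where the hypothesis $\overline{\Gamma_{\rm obs}} \cap \overline{\Gamma_{\rm obs}^C} = \emptyset$ enters, guaranteeing the $\GHH^2(\Omega) \times \HH^1(\Omega)$ well-posedness without which the two trace operators would fail to be continuous.
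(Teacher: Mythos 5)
Your proof is correct and matches the paper's in substance: the decisive ingredient in both arguments is the coercivity supplied by the $\varepsilon$-penalization term combined with the continuity of the trace and normal-stress trace operators from $\GHH^2(\Omega)\times\HH^1(\Omega)$ onto $\GHH^{3/2}(\Gamma_{\rm obs}^C)\times\GHH^{1/2}(\Gamma_{\rm obs}^C)$, which is exactly where the hypothesis $\overline{\Gamma_{\rm obs}}\cap\overline{\Gamma_{\rm obs}^C}=\emptyset$ enters. The only cosmetic differences are that you obtain coercivity as a direct quantitative lower bound on the quadratic part and conclude via Lax--Milgram applied to the Euler--Lagrange equation, whereas the paper proves coercivity of $F_\varepsilon$ by contradiction and invokes the standard minimization theorem for continuous, strictly convex, coercive functionals.
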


\begin{proof}
Obviously, the functional $F_\varepsilon$ is continuous and strictly convex. 
Furthermore, it is coercive. Indeed, suppose it is not. Then there exists
a sequence $(\Gvarphi_{N}^m ,\Gpsi_{D}^m )$ and a constant $C>0$ such that
\BEAN
\lim_{m \rightarrow \infty}  \Vert (\Gvarphi_{N}^m , \Gpsi_{D}^m)  \Vert_{\GHH^{1/2}(\Gamma_{\rm obs}^C) \times \GHH^{3/2}(\Gamma_{\rm obs}^C)} = + \infty
& \text{ and } &
F_\varepsilon( \Gvarphi_{N}^m , \Gpsi_{D}^m ) <C.
\EEAN
This directly implies $ \Vert ( \Gv_{\Gvarphi_{N}^m },p_{\Gvarphi_{N}^m }) \Vert_{\GHH^2(\Omega) \times \HH^1(\Omega)} <C $ and
$
\Vert (\Gv_{\Gpsi_{D}^m },p_{\Gpsi_{D}^m }) \Vert_{\GHH^2(\Omega) \times \HH^1(\Omega)}<C
$, which directly implies $ \Vert (\Gvarphi_{N}^m , \Gpsi_{D}^m )\Vert_{\GHH^{1/2}(\Gamma_{\rm obs}^C) \times \GHH^{3/2}(\Gamma_{\rm obs}^C)} < C$ by continuity of
trace and normal derivative operators, which is a contradiction with the initial assumptions.

Therefore $F_\varepsilon$ is continuous, strictly convex and coercive, which implies the result (see~\cite{Ekeland}).
 \end{proof}

Suppose now that the initial data completion problem admits a (necessarily unique) solution $(\Gv,p) \in \GHH^2(\Omega) \times \HH^1(\Omega)$. 
Then, we have the following

\begin{theorem}
The solution $(\Gv_{\Gvarphi_{N}^\varepsilon },p_{\Gpsi_{D}^\varepsilon})\in \GHH^2(\Omega) \times \HH^1(\Omega)$ converges to $(\Gv,p)\in \GHH^2(\Omega) \times \HH^1(\Omega)$ solution of the data completion problem for the Stokes problem~\eqref{ErrorEstimStokesPb} when $\varepsilon$ tends to zero, strongly in $\GHH^2(\Omega) \times \HH^1(\Omega)$.
\end{theorem}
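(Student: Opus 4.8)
The plan is to follow the same architecture as the proof of Theorem~\ref{thm_QR} for the quasi-reversibility method: derive an a priori bound on the minimizers from a well-chosen competitor, extract weak limits by compactness, identify those limits through the uniqueness of the data completion problem, and finally upgrade weak to strong convergence by a convergence-of-norms argument. Throughout I would write $(\Gv_\varepsilon^1,p_\varepsilon^1)\ov(\Gv_{\Gvarphi_N^\varepsilon},p_{\Gvarphi_N^\varepsilon})$ and $(\Gv_\varepsilon^2,p_\varepsilon^2)\ov(\Gv_{\Gpsi_D^\varepsilon},p_{\Gpsi_D^\varepsilon})$ for the two Stokes solutions attached to the minimizer $(\Gvarphi_N^\varepsilon,\Gpsi_D^\varepsilon)$, and abbreviate the penalization term by $P(\Gvarphi_N,\Gpsi_D)\ov\norm{(\Gv_{\Gvarphi_N},p_{\Gvarphi_N})}_{\GHH^2(\Omega)\times\HH^1(\Omega)}^2+\norm{(\Gv_{\Gpsi_D},p_{\Gpsi_D})}_{\GHH^2(\Omega)\times\HH^1(\Omega)}^2$, so that $F_\varepsilon=F+\varepsilon P$.

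First I would analyse the zero set $\mathcal{Z}\ov\{F=0\}$. Since the data completion problem admits the solution $(\Gv,p)$, taking $\Gvarphi_N=\sigma(\Gv,p)\Gn$ and $\Gpsi_D=\Gv$ on $\Gamma_{\rm obs}^C$ forces, by uniqueness of the two mixed Stokes problems~\eqref{PbStokesKV}, $\Gv_{\Gvarphi_N}=\Gv_{\Gpsi_D}=\Gv$, so $\mathcal{Z}\neq\emptyset$. More precisely, $F(\Gvarphi_N,\Gpsi_D)=0$ means that the $\GHH^1$ and $\GHH^2$ seminorms of $\Gv_{\Gvarphi_N}-\Gv_{\Gpsi_D}$ vanish, hence this difference is a constant vector; one then checks that $(\Gv_{\Gvarphi_N},p_{\Gpsi_D})$ solves~\eqref{ErrorEstimStokesPb}, so by the uniqueness guaranteed by~\eqref{ineqStabOseen}--\eqref{ineqStabOseen02} every element of $\mathcal{Z}$ has diagonal pair exactly $(\Gv,p)$, while $p_{\Gvarphi_N}=p+c$ and $\Gv_{\Gpsi_D}=\Gv-\Gc$ for some constants $c\in\Rr$ and $\Gc\in\Rr^N$. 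On $\mathcal{Z}$ the penalization $P$ is a coercive quadratic in $(c,\Gc)$ and therefore attains a minimum $P^*$ at some admissible $(\Gvarphi_N^0,\Gpsi_D^0)$.

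Next I would use minimality of $(\Gvarphi_N^\varepsilon,\Gpsi_D^\varepsilon)$: the inequality $F_\varepsilon(\Gvarphi_N^\varepsilon,\Gpsi_D^\varepsilon)\leq F_\varepsilon(\Gvarphi_N^0,\Gpsi_D^0)=\varepsilon P^*$ yields simultaneously $F(\Gvarphi_N^\varepsilon,\Gpsi_D^\varepsilon)\leq\varepsilon P^*\to0$ and the sharp bound $P(\Gvarphi_N^\varepsilon,\Gpsi_D^\varepsilon)\leq P^*$. The latter bounds both pairs in $\GHH^2(\Omega)\times\HH^1(\Omega)$, so along any sequence $\varepsilon_n\to0$ I can extract weakly convergent subsequences. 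Passing to the limit in~\eqref{PbStokesKV} and in the boundary conditions (trace and traction operators being continuous, hence weakly continuous), and combining $F\to0$ with the weak lower semicontinuity of the seminorms to conclude that the two limit velocities differ by a constant, I identify the diagonal limit as a solution of~\eqref{ErrorEstimStokesPb}, hence equal to $(\Gv,p)$; the two limits are then $(\Gv,p+c^\infty)$ and $(\Gv-\Gc^\infty,p)$ for some constants $c^\infty,\Gc^\infty$.

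The decisive step, and the one I expect to be the main obstacle, is the passage from weak to strong convergence. Denoting by $A$ and $B$ the squared norms of the two weak limits, weak lower semicontinuity gives $A+B\leq\liminf P(\Gvarphi_N^{\varepsilon_n},\Gpsi_D^{\varepsilon_n})\leq P^*$; but the limit configuration lies in $\mathcal{Z}$, so $A+B\geq P^*$ by minimality, which forces $A+B=P^*$ and $P(\Gvarphi_N^{\varepsilon_n},\Gpsi_D^{\varepsilon_n})\to A+B$. Since each summand is weakly lower semicontinuous with liminf at least $A$, resp. $B$, convergence of the sum forces convergence of each norm separately; together with weak convergence this gives strong convergence of each pair to its limit, and in particular $(\Gv_{\Gvarphi_N^{\varepsilon_n}},p_{\Gpsi_D^{\varepsilon_n}})\to(\Gv,p)$ strongly. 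As the diagonal limit equals $(\Gv,p)$ for every subsequence, the whole family converges. The obstacle is exactly that $F$ is built from seminorms and hence controls the velocities only up to additive constants: the crude competitor $(\Gv,p)$ gives merely $P\leq 2\norm{(\Gv,p)}_{\GHH^2(\Omega)\times\HH^1(\Omega)}^2$, leaving a gap; it is the minimum-norm competitor $P^*$ that matches the weak-lower-semicontinuity lower bound with the upper bound and thereby delivers the norm convergence needed for strong convergence.
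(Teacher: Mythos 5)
Your proposal is correct and follows essentially the same route as the paper: a competitor bound from minimality, weak compactness, identification of the diagonal limit via uniqueness of the data completion problem, and a weak-to-strong upgrade by matching the penalization upper bound with the weak-lower-semicontinuity lower bound. The only (minor) difference is that you produce the sharp competitor a priori as the minimal-penalization element of the zero set of $F$, whereas the paper uses the a posteriori competitor $(\Gvarphi_{N}^{\rm ex}+c\,\Gn,\Gpsi_{D}^{\rm ex}+\Gc)$ built from the constants appearing in the weak limit; both devices close exactly the gap you correctly identify as the crux.
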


\begin{proof}
We denote $\Gvarphi_{N}^{\rm ex} \ov  \sigma(\Gv,p) \Gn_{\vert \Gamma_{\rm obs}^C}$ and $\Gpsi_{D}^{\rm ex} \ov  \Gv_{\vert \Gamma_{\rm obs}^C}$. By definition, we have
$(\Gv_{\Gvarphi_{N}^{\rm ex}} ,p_{\Gvarphi_{N}^{\rm ex}})  = (\Gv_{\Gpsi_{D}^{\rm ex}}, p_{\Gpsi_{D}^{\rm ex}}) = (\Gv,p) $ and 
$F(\Gvarphi_{N}^{\rm ex}, \Gpsi_{D}^{\rm ex})  = 0$. Therefore, by definition of~$\Gvarphi_{N}^\varepsilon  $ and~$\Gpsi_{D}^\varepsilon  $, we have
\begin{multline*}
\vert \Gv_{\Gvarphi_{N}^\varepsilon  } - \Gv_{\Gpsi_{D}^\varepsilon  } \vert_{\GHH^2(\Omega)}^2 
+\vert \Gv_{\Gvarphi_{N}^\varepsilon  } - \Gv_{\Gpsi_{D}^\varepsilon  } \vert_{\GHH^1(\Omega)}^2 
+\varepsilon \Vert (\Gv_{\Gvarphi_{N}^\varepsilon  },p_{\Gvarphi_{N}^\varepsilon  }) \Vert_{\GHH^2(\Omega) \times \HH^1(\Omega)}^2  + \varepsilon \Vert (\Gv_{\Gpsi_{D}^\varepsilon  },p_{\Gpsi_{D}^\varepsilon  }) \Vert_{\GHH^2(\Omega) \times \HH^1(\Omega)}^2 \\
 \leq F_\varepsilon(\Gvarphi_{N}^{\rm ex}, \Gpsi_{D}^{\rm ex}) =2 \varepsilon \Vert (\Gv,p) \Vert_{\GHH^2(\Omega) \times \HH^1(\Omega)}^2
\end{multline*}
which directly implies
\begin{equation} \label{proof_conv_KV_1}
\vert \Gv_{\Gvarphi_{N}^\varepsilon  } - \Gv_{\Gpsi_{D}^\varepsilon  } \vert_{\GHH^2(\Omega)}^2 
+\vert \Gv_{\Gvarphi_{N}^\varepsilon  } - \Gv_{\Gpsi_{D}^\varepsilon  } \vert_{\GHH^1(\Omega)}^2 
\leq 2 \varepsilon \Vert (\Gv,p) \Vert_{\GHH^2(\Omega) \times \HH^1(\Omega)}^2
\end{equation}
and
\begin{equation} \label{proof_conv_KV_2}
 \Vert (\Gv_{\Gvarphi_{N}^\varepsilon  },p_{\Gvarphi_{N}^\varepsilon  }) \Vert_{\GHH^2(\Omega) \times \HH^1(\Omega)}^2
 +
 \Vert (\Gv_{\Gpsi_{D}^\varepsilon  },p_{\Gpsi_{D}^\varepsilon  }) \Vert_{\GHH^2(\Omega) \times \HH^1(\Omega)}^2
 \leq 2  \Vert (\Gv,p) \Vert_{\GHH^2(\Omega) \times \HH^1(\Omega)}^2.
\end{equation}

Let us consider now an arbitrary sequence of positive real numbers $\varepsilon_m$ such that $\displaystyle \lim_{m \rightarrow \infty} \varepsilon_{m} = 0$.
From~\eqref{proof_conv_KV_2}, we see that 
$$
(\Gv_{\Gvarphi_{N}^m },p_{\Gvarphi_{N}^m }) \ov  (\Gv_{\Gvarphi_{N_{\varepsilon_m}}},p_{\Gvarphi_{N_{\varepsilon_m}}})
\quad \mbox{ and } \quad 
(\Gv_{\Gpsi_{D}^m },p_{\Gpsi_{D}^m }) \ov  (\Gv_{\Gpsi_{D_{\varepsilon_m}}},p_{\Gpsi_{D_{\varepsilon_m}}})
$$
are bounded in $\GHH^2(\Omega) \times \HH^1(\Omega)$. Therefore, up to a subsequence, we have the following weak convergences in $\GHH^2(\Omega)$
$$
\Gv_{\Gpsi_{D}^m } \rightharpoonup \Gv_{\Gpsi_{D_\infty}}, \quad \Gv_{\Gvarphi_{N}^m } \rightharpoonup \Gv_{\Gvarphi_{N_\infty}}
$$
and the following weak convergences in $\HH^1(\Omega)$
$$
p_{\Gpsi_{D}^m } \rightharpoonup p_{\Gpsi_{D_\infty}}, \quad p_{\Gvarphi_{N}^m } \rightharpoonup p_{\Gvarphi_{N_\infty}}.
$$
But Equation~\eqref{proof_conv_KV_1} implies directly that $\Gv_{\Gpsi_{D_\infty}} =\Gv_{\Gvarphi_{N_\infty}}+ \Gc $, with $\Gc \in \mathbb{R}^N$, and passing to the limit in the first equations in each Stokes problem of~\eqref{PbStokesKV}, we get $p_{\psi_{D_{\infty}}}=p_{\varphi_{N_{\infty}}}+c$, with $c\in \Rr$. 
In particular, passing to the limit gives $\Gv_{\Gvarphi_{N_\infty}}  = \Gg_{D}$ and $ 
\sigma(\Gv_{\Gvarphi_{N_\infty}}, p_{\Gpsi_{D_\infty}}) \Gn 
= \sigma(\Gv_{\Gpsi_{D_\infty}}, p_{\Gpsi_{D_\infty}}) \Gn   = \Gg_N$ on $\Gamma_{\rm obs}$ by weak continuity of the trace and normal derivative on~$\Gamma_{\rm obs}$. Therefore $( \Gv_{\Gvarphi_{N_\infty}} , p_{\Gpsi_{D_\infty}} ) =\( \Gv , p\)$. Hence, we have the following weak convergences in $\GHH^2(\Omega)$
$$
\Gv_{\Gpsi_{D}^m } \rightharpoonup \Gv + \Gc, 
\quad
 \Gv_{\Gvarphi_{N}^m } \rightharpoonup \Gv
$$
and the following weak convergences in $\HH^1(\Omega)$
$$
p_{\Gpsi_{D}^m } \rightharpoonup p, 
\quad 
p_{\Gvarphi_{N}^m } \rightharpoonup p + c.
$$

Now, we see that $F(\Gvarphi_{N}^{\rm ex},\Gpsi_{D}^{\rm ex} ) = 0 = F(\Gvarphi_{N}^{\rm ex}+ c\, \Gn,\Gpsi_{D}^{\rm ex}+\Gc)$ for any $c \in \mathbb{R}$ and $\Gc \in \mathbb{R}^N$. Therefore, similarly has previously,
we have
$
F_\varepsilon(\Gvarphi_{N}^m , \Gpsi_{D}^m ) \leq F_\varepsilon(\Gvarphi_{N}^{\rm ex}+c\, \Gn,\Gpsi_{D}^{\rm ex}+\Gc)$
which implies 
\begin{multline*}
 \Vert (\Gv_{\Gvarphi_{N}^\varepsilon  },p_{\Gvarphi_{N}^\varepsilon  }) \Vert_{\GHH^2(\Omega) \times \HH^1(\Omega)}^2
  +
 \Vert (\Gv_{\Gpsi_{D}^\varepsilon  },p_{\Gpsi_{D}^\varepsilon  }) \Vert_{\GHH^2(\Omega) \times \HH^1(\Omega)}^2
   \leq\Vert (\Gv,p + c)\Vert_{\GHH^2(\Omega) \times \HH^1(\Omega)}^2 \\ 
    + 
 \Vert (\Gv+\Gc,p) \Vert_{\GHH^2(\Omega) \times \HH^1(\Omega)}^2
\end{multline*}
directly implying that the weak convergences are actually strong convergences.

Finally, a standard argument \textit{ad absurdum} ends the proof as in the end of the proof of Theorem~\ref{thm_QR}.
\end{proof}

We now prove Theorem~\ref{ThmRateCv} for this penalized Kohn-Vogelius method, recalling that $(\Gv_\varepsilon,p_{\varepsilon}) \ov  (\Gv_{\Gvarphi_{N}^\varepsilon  }  , p_{\Gpsi_{D}^\varepsilon  })$.

\begin{proof}[Proof of Theorem~\ref{ThmRateCv} for the penalized Kohn-Vogelius method]
It is not difficult to verify that we have the \textit{a priori} bounds (see~\eqref{proof_conv_KV_2})
$$
\Vert \Gv_\varepsilon - \Gv \Vert_{\GHH^2(\Omega)} \leq C(\Gv,p),\quad  \Vert p_\varepsilon - p\Vert_{\HH^1(\Omega)}
\leq \tilde{C}(\Gv,p)
$$
where $C(\Gv,p)$ and $\tilde{C}(\Gv,p)$ are constants depending only on the $\GHH^2(\Omega)\times \HH^1(\Omega)$ norm of~$(\Gv,p)$. Furthermore, thanks to~\eqref{proof_conv_KV_1}, we see that
\begin{eqnarray} \label{SemiNormH2}
 \Vert \sigma(\Gv_\varepsilon,p_\varepsilon) \Gn - \Gg_N \Vert_{\GHH^{1/2}(\Gamma_{\rm obs})} 
& = & \Vert \sigma(\Gv_{\Gvarphi_{N}^\varepsilon  },p_{\Gpsi_{D}^\varepsilon  }) \Gn - \sigma(\Gv_{\Gpsi_{D}^\varepsilon  },p_{\Gpsi_{D}^\varepsilon  }) \Gn  \Vert_{\GHH^{1/2}(\Gamma_{\rm obs})}  \nonumber \\
  &  \leq & \vert \Gv_{\varepsilon} - \Gv_{\Gpsi_{D}^\varepsilon  } \vert_{\GHH^2(\Omega)}+\vert \Gv_{\varepsilon} - \Gv_{\Gpsi_{D}^\varepsilon  } \vert_{\GHH^1(\Omega)} \\ 
   &  \leq &  \sqrt{\varepsilon}\,  C(\Gv,p) \nonumber 
\end{eqnarray}
where $C(\Gv,p)$ is another constant depending only on $\GHH^2(\Omega)\times \HH^1(\Omega)$ norm of~$(\Gv,p)$.

Hence, applying again estimates~\eqref{ineqStabOseen} and~\eqref{ineqStabOseen02}, we directly obtain the announced result.
\end{proof}

\begin{remark}
The Kohn-Vogelius functional is classically defined by $\MF (\Gvarphi_N,\Gpsi_D) = \vert \Gv_{\Gvarphi_N} - \Gv_{\Gpsi_D} \vert_{\GHH^1(\Omega)}^2 $ instead of $F (\Gvarphi_N,\Gpsi_D) = \vert \Gv_{\Gvarphi_N} - \Gv_{\Gpsi_D} \vert_{\GHH^2(\Omega)}^2  + \vert \Gv_{\Gvarphi_N} - \Gv_{\Gpsi_D} \vert_{\GHH^1(\Omega)}^2 $. Notice that Proposition~\ref{prop_KV} is also valid for the associated functional $\MF_{\varepsilon}$. The only point where the $\GHH^2$-seminorm is needed is Inequality~\eqref{SemiNormH2}.
\end{remark}

\bibliographystyle{abbrv}
\bibliography{biblio2}

\def\cprime{$'$}
\begin{thebibliography}{10}

\bibitem{Adams}
R.~A. Adams and J.~J.~F. Fournier.
\newblock {\em Sobolev spaces}, volume 140 of {\em Pure and Applied Mathematics
  (Amsterdam)}.
\newblock Elsevier/Academic Press, Amsterdam, second edition, 2003.

\bibitem{AleDel03}
G.~Alessandrini, L.~Del~Piero, and L.~Rondi.
\newblock Stable determination of corrosion by a single electrostatic boundary
  measurement.
\newblock {\em Inverse Problems}, 19(4):973--984, 2003.

\bibitem{Alessandrini_topical_review}
G.~Alessandrini, L.~Rondi, E.~Rosset, and S.~Vessella.
\newblock The stability for the {C}auchy problem for elliptic equations.
\newblock {\em Inverse Problems}, 25(12):123004, 47, 2009.

\bibitem{AleSin06}
G.~Alessandrini and E.~Sincich.
\newblock Detecting nonlinear corrosion by electrostatic measurements.
\newblock {\em Appl. Anal.}, 85(1-3):107--128, 2006.

\bibitem{BafGra10}
L.~Baffico, C.~Grandmont, and B.~Maury.
\newblock Multiscale modeling of the respiratory tract.
\newblock {\em Math. Models Methods Appl. Sci.}, 20(1):59--93, 2010.

\bibitem{Ballerini2010}
A.~Ballerini.
\newblock Stable determination of an immersed body in a stationary {S}tokes
  fluid.
\newblock {\em Inverse Problems}, 26(12):125015, 25, 2010.

\bibitem{BelChe08}
M.~Bellassoued, J.~Cheng, and M.~Choulli.
\newblock Stability estimate for an inverse boundary coefficient problem in
  thermal imaging.
\newblock {\em J. Math. Anal. Appl.}, 343(1):328--336, 2008.

\bibitem{Belgacem}
F.~Ben~Belgacem.
\newblock Why is the cauchy problem severely ill-posed?
\newblock {\em Inverse Problems}, 23(2):823, 2007.

\bibitem{BouEgl13}
M.~Boulakia, A.-C. Egloffe, and C.~Grandmont.
\newblock Stability estimates for a {R}obin coefficient in the two-dimensional
  {S}tokes system.
\newblock {\em Math. Control Relat. Fields}, 3(1):21--49, 2013.

\bibitem{BouEgl13-2}
M.~Boulakia, A.-C. Egloffe, and C.~Grandmont.
\newblock Stability estimates for the unique continuation property of the
  {S}tokes system and for an inverse boundary coefficient problem.
\newblock {\em Inverse Problems}, 29(11):115001, 21, 2013.

\bibitem{Bourgeois}
L.~Bourgeois and J.~Dard\'e.
\newblock A duality-based method of quasi-reversibility to solve the cauchy
  problem in the presence of noisy data.
\newblock {\em Inverse Problems}, 26(9):095016, 2010.

\bibitem{Cao}
H.~Cao, M.~V. Klibanov, and S.~V. Pereverzev.
\newblock A carleman estimate and the balancing principle in the
  quasi-reversibility method for solving the cauchy problem for the laplace
  equation.
\newblock {\em Inverse Problems}, 25(3):035005, 2009.

\bibitem{ChaFel04}
S.~Chaabane, I.~Fellah, M.~Jaoua, and J.~Leblond.
\newblock Logarithmic stability estimates for a {R}obin coefficient in
  two-dimensional {L}aplace inverse problems.
\newblock {\em Inverse Problems}, 20(1):47--59, 2004.

\bibitem{ChaJao99}
S.~Chaabane and M.~Jaoua.
\newblock Identification of {R}obin coefficients by the means of boundary
  measurements.
\newblock {\em Inverse Problems}, 15(6):1425--1438, 1999.

\bibitem{CheCho08}
J.~Cheng, M.~Choulli, and J.~Lin.
\newblock Stable determination of a boundary coefficient in an elliptic
  equation.
\newblock {\em Math. Models Methods Appl. Sci.}, 18(1):107--123, 2008.

\bibitem{Darde}
J.~Dard{\'e}.
\newblock Iterated quasi-reversibility method applied to elliptic and parabolic
  data completion problems.
\newblock Preprint, Mar. 2015.

\bibitem{Ekeland}
I.~Ekeland and R.~Temam.
\newblock {\em Convex analysis and variational problems}.
\newblock SIAM, 1976.

\bibitem{FabLeb96}
C.~Fabre and G.~Lebeau.
\newblock Prolongement unique des solutions de l'equation de {S}tokes.
\newblock {\em Comm. Partial Differential Equations}, 21(3-4):573--596, 1996.

\bibitem{FursikovImanuvilov2006}
A.~V. Fursikov and O.~Y. Imanuvilov.
\newblock {\em Controllability of evolution equations}, volume~34 of {\em
  Lecture Notes Series}.
\newblock Seoul National University Research Institute of Mathematics Global
  Analysis Research Center, Seoul, 1996.

\bibitem{Grisvard}
P.~Grisvard.
\newblock {\em Elliptic problems in nonsmooth domains}, volume~24 of {\em
  Monographs and Studies in Mathematics}.
\newblock Pitman (Advanced Publishing Program), Boston, MA, 1985.

\bibitem{ImmPuel2003}
O.~Y. Imanuvilov and J.-P. Puel.
\newblock Global {C}arleman estimates for weak solutions of elliptic
  nonhomogeneous {D}irichlet problems.
\newblock {\em Int. Math. Res. Not.}, (16):883--913, 2003.

\bibitem{Klibanov_review}
M.~V. Klibanov.
\newblock Carleman estimates for the regularization of ill-posed {C}auchy
  problems.
\newblock {\em Appl. Numer. Math.}, 94:46--74, 2015.

\bibitem{Klibanov}
M.~V. Klibanov and A.~Timonov.
\newblock {\em Carleman estimates for coefficient inverse problems and
  numerical applications.}
\newblock Inverse and Ill-posed Problems Series. VSP, Utrecht, 2004.

\bibitem{LionsLattes}
R.~Latt\`es and J.-L. Lions.
\newblock {\em The method of quasi-reversibility. Applications to partial
  differential equations.}
\newblock Modern Analytic and Computational Methods in Science and Mathematics.
  American Elsevier Publishing Co., New-York, 1969.

\bibitem{RouLeb12}
J.~Le~Rousseau and G.~Lebeau.
\newblock On {C}arleman estimates for elliptic and parabolic operators.
  {A}pplications to unique continuation and control of parabolic equations.
\newblock {\em ESAIM Control Optim. Calc. Var.}, 18(3):712--747, 2012.

\bibitem{Uhlmann2010}
C.-L. Lin, G.~Uhlmann, and J.-N. Wang.
\newblock Optimal three-ball inequalities and quantitative uniqueness for the
  {S}tokes system.
\newblock {\em Discrete Contin. Dyn. Syst.}, 28(3):1273--1290, 2010.

\bibitem{QuaVen03}
A.~Quarteroni and A.~Veneziani.
\newblock Analysis of a geometrical multiscale model based on the coupling of
  {ODE}s and {PDE}s for blood flow simulations.
\newblock {\em Multiscale Model. Simul.}, 1(2):173--195 (electronic), 2003.

\bibitem{Sav97}
G.~Savar{\'e}.
\newblock Regularity and perturbation results for mixed second order elliptic
  problems.
\newblock {\em Comm. Partial Differential Equations}, 22(5-6):869--899, 1997.

\bibitem{Sin07}
E.~Sincich.
\newblock Lipschitz stability for the inverse {R}obin problem.
\newblock {\em Inverse Problems}, 23(3):1311--1326, 2007.

\bibitem{TucsnakWeiss}
M.~Tucsnak and G.~Weiss.
\newblock {\em Observation and control for operator semigroups}.
\newblock Birkh\"auser Advanced Texts: Basler Lehrb\"ucher. [Birkh\"auser
  Advanced Texts: Basel Textbooks]. Birkh\"auser Verlag, Basel, 2009.

\bibitem{VigFig06}
I.~E. Vignon-Clementel, C.~A. Figueroa, K.~E. Jansen, and C.~A. Taylor.
\newblock Outflow boundary conditions for three-dimensional finite element
  modeling of blood flow and pressure in arteries.
\newblock {\em Comput. Methods Appl. Mech. Engrg.}, 195(29-32):3776--3796,
  2006.

\end{thebibliography}

\end{document}